\theoremstyle{plain}
\newtheorem{theorem}{Theorem}%
\newtheorem{corollary}[theorem]{Corollary}%
\newtheorem{lemma}[theorem]{Lemma}%
\theoremstyle{definition}
\newtheorem{remark}[theorem]{Remark}%
\newtheorem*{remark*}{Remark}%
\renewcommand\theta{\vartheta}
\def\dif{\mathrm{d}}
\def\R{\mathbb{R}}
\def\N{\mathbb{N}}
\def\cov{\mathrm{cov}}
\def\prob{\mathrm{P}}
\def\cF{\mathcal{F}}
\newcommand{\E}[1]{\mathbb{E}\left[ #1 \right]}
\newcommand{\En}[1]{\mathbb{E}_n\left[ #1 \right]}
\newcommand{\Ens}[1]{\mathbb{E}_n^*\left[ #1 \right]}
\newcommand{\ind}[1]{\mathbbm{1}\left( #1 \right)}
\def\Inv{\scriptscriptstyle \mathrm{Inv}}
\begin{document}

	\markboth{T. Zwingmann and H. Holzmann}{Weak convergence of expectile processes}
	
	\title{Weak convergence of quantile and expectile processes under general assumptions}
	
	\author{Tobias Zwingmann\footnote{Email: zwingmann@mathematik.uni-marburg.de} \ and         Hajo Holzmann\footnote{Corresponding author, Email: holzmann@mathematik.uni-marburg.de}  \\
		\small{Fachbereich Mathematik und Informatik}  \\
		\small{Philipps-Universit\"at Marburg, Germany} \\
	}
\maketitle

\begin{abstract}
We show weak convergence of quantile and expectile processes to Gaussian limit processes in the space of bounded functions endowed with an appropriate semimetric which is based on the concepts of epi- and hypo convergence as introduced in \citet{buecher2014}. We impose assumptions for which it is known that weak convergence with respect to the supremum norm or the Skorodhod metric generally fails to hold. For expectiles, we only require a distribution with finite second moment but no further smoothness properties of distribution function, for quantiles, the distribution is assumed to be absolutely continuous with a version of its Lebesgue density which is strictly positive and has left- and right-sided limits. We also show consistency of the bootstrap for this mode of convergence.   
\end{abstract}
\noindent {\small {\itshape Keywords.}\quad  epi - and hypo convergence, expectile process, quantile process, weak convergence}
%
%
%%%%%%%%%%%%%%%%%%%%%%%%%%%%%%%%%%%%%%%%%%%%%%%%%%%%%%%%%%%%%%%%%%%%%%%%%%%%
%%%%%%%%%%%%%%%%%%                                           %%%%%%%%%%%%%%%
%%%%%%%%%%%%%%%%%%    Introduction                           %%%%%%%%%%%%%%%
%%%%%%%%%%%%%%%%%%                                           %%%%%%%%%%%%%%%
%%%%%%%%%%%%%%%%%%%%%%%%%%%%%%%%%%%%%%%%%%%%%%%%%%%%%%%%%%%%%%%%%%%%%%%%%%%%
%

\section{Introduction}

Quantiles and expectiles are fundamental parameters of a distribution which are of major interest in statistics, econometrics and finance \citep{Koenker, Klar1, Newey, Ziegel1} . 

The asymptotic properties of sample quantiles and expectiles have been addressed in detail under suitable conditions. 
For quantiles, differentiability of the distribution function at the quantile with positive derivative implies asymptotic normality of the empirical quantile, and under a continuity assumption on the density one obtains weak convergence of the quantile process to a Gaussian limit process in the space of bounded functions with the supremum distance from the functional delta-method \citep{vdv2000}. However, without a positive derivative of the distribution function at the quantile, the weak limit will be non-normal \citep{Knight2002LimDistr}, and thus process convergence to a Gaussian limit with respect to the supremum distance cannot hold true.  

Similarly, for a distribution with finite second moment, the empirical expectile is asymptotically normally distributed if the distribution function is continuous at the expectile, but non-normally distributed otherwise \citep{holzmann2016}. For continuous distribution functions, process convergence of the empirical expectile process in the space of continuous functions also holds true, but for discontinuous distribution functions this can no longer be valid. 

In this note we discuss convergence of quantile and expectile processes from independent and identically distributed observations  under more general conditions. Indeed, we show that the expectile process converges to a Gaussian limit in the semimetric space of bounded functions endowed with the hypi-semimetric as recently introduced in \citet{buecher2014} under the assumption of a finite second moment only. Since the Gaussian limit process is discontinuous in general while the empirical expectile process is continuous, this convergence can hold neither with respect to the supremum distance nor with a variant of the Skorodhod metric. Similarly, we show weak convergence of the quantile process under the hypi-semimetric if the distribution function is absolutely continuous with a version of its Lebesgue density that is strictly positive and has left and right limits at any point. These results still imply weak convergence of important statistics such as Kolmogorov-Smirnov and Cramer-von Mises type statistics.  
We also show consistency of the $n$ out of $n$ bootstrap in both situations. 

A sequence of bounded functions $(f_n)_{n \in \N}$ on a compact metric space hypi-converges to the bounded function $f$ if it epi-converges to the lower\--se\-mi\-con\-tin\-u\-ous hull of $f$, and hypo-converges to its upper\--se\-mi\-con\-tin\-u\-ous hull, see the supplementary material for the definitions. \citet{buecher2014} show that this mode of convergence can be expressed in terms of a semimetric $\dif_{hypi}$ on the space of bounded functions, the hypi-semimetric.   

We shall use the notation $c^{-1}=1/c$ for $c\in\R$, $c\ne 0$, and denote the (pseudo-) inverse of a function $h$ with $h^{\Inv}$. We write $\stackrel{\mathcal{L}}{\to}$ for ordinary weak convergence of real-valued random variables. Weak convergence in semimetric spaces will be understood in the sense of Hofmann-J\o rgensen, see \citet{vdvW2013} and \citet{buecher2014}.

\section{Convergence of the expectile process}

For a random variable $Y$ with distribution function $F$ and finite mean $\E{|Y|}<\infty$, the 
$\tau$-expectile $\mu_{\tau} = \mu_{\tau} (F)$, $\tau \in (0,1)$, can be defined as the unique solution of  
%
%\begin{align} \label{eq:expectile-foc}
$	\E{I_\tau(x,Y)} = 0,$ $x \in \R, $
%\end{align}
%
where 
\begin{equation}\label{eq:identfct}
	I_\tau(x,y) = \tau (y-x) \ind{y \geq x} - (1-\tau)\, (x-y) \ind{y < x}, 
\end{equation}
and $\ind{\cdot}$ is the indicator function. Given a sequence of independent and identically distributed copies $Y_1, Y_2, \ldots $ of $Y$ and a natural number $n \in \N$,  we let 
\[
 \hat \mu_{\tau,n} = \mu_\tau\big(F_n\big), \qquad F_n(x) = \frac1n \sum_{k=1}^n\, \ind{Y_k \leq x},
\]
be the empirical $\tau$-expectile and the empirical distribution function, respectively. 

\begin{theorem}\label{thm:main_thm}
	Suppose that $\E{Y^2} < \infty$. Given $0<\tau_l<\tau_u<1$, the standardized expectile process $\tau \mapsto \sqrt{n}\big(\hat \mu_{\tau, n}-\mu_{\tau}\big)$, $\tau \in [\tau_l, \tau_u]$, converges weakly in $\big(\ell^{\infty}[\tau_l,\tau_u], \dif_{hypi}\big)$ to the limit process $\big(\dot{\psi}^{\Inv}(Z)(\tau)\big)_{\tau\in [\tau_l, \tau_u]}$. Here, 
$ 
	\dot{\psi}^{\Inv} (\varphi) (\tau)  = \big(\tau + (1-2\,\tau) F(\mu_\tau)\big)^{-1} \varphi(\tau)
$, 
$ \varphi \in \ell^\infty[\tau_l, \tau_u]$, 
and	
	$(Z_\tau)_{\tau \in [\tau_l,\tau_u]}$ is a centered tight Gaussian process with continuous sample paths and covariance function $\cov\big(Z_{\tau},Z_{\tau'}\big) = \E{I_{\tau}(\mu_{\tau},Y)\,I_{\tau'}(\mu_{\tau'},Y)}$ for $\tau,\tau'\in[\tau_l,\tau_u]$.
\end{theorem}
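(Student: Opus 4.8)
\emph{Strategy.} I would read $\hat\mu_{\tau,n}$ as a $Z$-estimator: with $\psi(\tau,x)=\E{I_\tau(x,Y)}$ and $\psi_n(\tau,x)=\En{I_\tau(x,Y)}$, $\mu_\tau$ is the unique zero of the continuous, strictly decreasing map $x\mapsto\psi(\tau,x)$, while $x\mapsto\psi_n(\tau,x)$ is continuous, strictly decreasing, with a unique \emph{exact} zero $\hat\mu_{\tau,n}$. The plan is to (i) establish, uniformly in $\tau$, the asymptotic identity
\[
\bar a_{\tau,n}\,\sqrt n\big(\hat\mu_{\tau,n}-\mu_\tau\big)=\mathbb{G}_n I_\tau(\mu_\tau,\cdot)+o_\prob(1),\qquad \mathbb{G}_nf=\sqrt n\big(\En f-\E f\big),
\]
with $\bar a_{\tau,n}$ an average slope of $\psi(\tau,\cdot)$ between $\mu_\tau$ and $\hat\mu_{\tau,n}$ that is deliberately \emph{not} linearised, and then (ii) ``solve'' this identity and pass to the limit in $\big(\ell^{\infty}[\tau_l,\tau_u],\dif_{hypi}\big)$ by verifying the epi/hypo-convergence conditions which, by \citet{buecher2014}, characterise weak convergence there. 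All the non-uniform and discontinuous behaviour of the limit is thereby confined to the factor $\bar a_{\tau,n}$.

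\emph{Preliminary ingredients.} Writing $G_+(x)=\E{(Y-x)_+}$, $G_-(x)=\E{(x-Y)_+}$ (continuous, $G_-$ positive on the interior of the support), $\mu_\tau$ solves $\tau G_+(\mu_\tau)=(1-\tau)G_-(\mu_\tau)$, so $\tau\mapsto\mu_\tau$ is continuous and strictly increasing on $[\tau_l,\tau_u]$; fix a compact $K\supseteq[\mu_{\tau_l}-1,\mu_{\tau_u}+1]$. The one-sided derivatives of $\psi(\tau,\cdot)$ are $-(\tau+(1-2\tau)F(x-))$ and $-(\tau+(1-2\tau)F(x))$, both lying in $[-(1-c),-c]$ with $c:=\min(\tau_l,1-\tau_u)>0$, which gives the uniform identifiability bound $|\psi(\tau,x)-\psi(\tau,\mu_\tau)|\ge c|x-\mu_\tau|$. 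The class $\mathcal F=\{I_\tau(x,\cdot):\tau\in[\tau_l,\tau_u],\,x\in K\}$ is Lipschitz in $(\tau,x)$ with envelope $|Y|+C$, hence Glivenko--Cantelli and — because $\E{Y^2}<\infty$ — Donsker, so $(\tau,x)\mapsto\sqrt n(\psi_n-\psi)(\tau,x)=\mathbb{G}_nI_\tau(x,\cdot)$ converges weakly in $\ell^{\infty}([\tau_l,\tau_u]\times K)$ to a tight Gaussian process $\mathbb{G}$ with uniformly continuous paths, and the sequence is asymptotically equicontinuous. Glivenko--Cantelli plus the identifiability bound gives $\sup_\tau|\hat\mu_{\tau,n}-\mu_\tau|\to0$ in probability, and the same bound applied to the identity of the next step gives $\sqrt n\sup_\tau|\hat\mu_{\tau,n}-\mu_\tau|\le c^{-1}\sup_{\tau,x}|\mathbb{G}_nI_\tau(x,\cdot)|=O_\prob(1)$. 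In particular $\mathbb{G}_nI_\cdot(\mu_\cdot,\cdot)\rightsquigarrow Z$ in $(\ell^{\infty}[\tau_l,\tau_u],\|\cdot\|_\infty)$ with $Z=\mathbb{G}(\cdot,\mu_\cdot)$ the stated centred tight Gaussian process, its paths continuous because $\tau\mapsto I_\tau(\mu_\tau,\cdot)$ is $L^2(F)$-continuous.

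\emph{The asymptotic identity and the pointwise limit.} From $\psi_n(\tau,\hat\mu_{\tau,n})=0=\psi(\tau,\mu_\tau)$ and the exact identity $\psi(\tau,\hat\mu_{\tau,n})-\psi(\tau,\mu_\tau)=-\bar a_{\tau,n}(\hat\mu_{\tau,n}-\mu_\tau)$ with
\[
\bar a_{\tau,n}=(\hat\mu_{\tau,n}-\mu_\tau)^{-1}\!\!\int_{\mu_\tau}^{\hat\mu_{\tau,n}}\!\!\big(\tau+(1-2\tau)F(s)\big)\,\dif s\ \in[c,\,1-c],
\]
one gets $\bar a_{\tau,n}\sqrt n(\hat\mu_{\tau,n}-\mu_\tau)=\sqrt n(\psi_n-\psi)(\tau,\hat\mu_{\tau,n})$, and asymptotic equicontinuity together with $\sup_\tau|\hat\mu_{\tau,n}-\mu_\tau|\to0$ turns the right-hand side into $\mathbb{G}_nI_\tau(\mu_\tau,\cdot)+o_\prob(1)$ uniformly in $\tau$. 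Since $\hat\mu_{\tau,n}-\mu_\tau$ has the same sign as $\mathbb{G}_nI_\tau(\mu_\tau,\cdot)$, the shrinking window over which $\bar a_{\tau,n}$ averages lies to the right of $\mu_\tau$ precisely on $\{\mathbb{G}_nI_\tau(\mu_\tau,\cdot)>0\}$, where $F$ on it tends to $F(\mu_\tau)$ (and to $F(\mu_\tau-)$ on the complement); hence, pointwise in $\tau$, $\bar a_{\tau,n}\to a_\tau:=\tau+(1-2\tau)F(\mu_\tau)$ on $\{Z_\tau>0\}$ and $\bar a_{\tau,n}\to\tau+(1-2\tau)F(\mu_\tau-)$ on $\{Z_\tau<0\}$. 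So $\sqrt n(\hat\mu_{\tau,n}-\mu_\tau)$ converges in finite-dimensional distributions to the process equal to $a_\tau^{-1}Z_\tau$ on $\{Z_\tau\ge0\}$ and to $(\tau+(1-2\tau)F(\mu_\tau-))^{-1}Z_\tau$ on $\{Z_\tau<0\}$.

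\emph{Passage to the hypi-limit; the main obstacle.} The set $J=\{\tau:F$ is discontinuous at $\mu_\tau\}$ is countable, since each jump point $x_0$ of $F$ equals $\mu_\tau$ for at most one $\tau$ (the solution of $\tau G_+(x_0)=(1-\tau)G_-(x_0)$); moreover $\tau\mapsto F(\mu_\tau)$ is nondecreasing and right-continuous, so $a_\cdot$ is càdlàg and the limit process $\mathcal Z_\tau:=\dot{\psi}^{\Inv}(Z)(\tau)=a_\tau^{-1}Z_\tau$ has càdlàg paths, equal to the pointwise limit of the previous step off $J$. The whole difficulty sits at the (countably many) points of $J$: there $\bar a_{\tau,n}\to a_\tau$ fails uniformly — near $\tau_0\in J$, for $\tau$ on one side the $O(n^{-1/2})$ fitting window of $\hat\mu_{\tau,n}$ straddles the jump of $F$ at $\mu_{\tau_0}$, so the local slope is not locally constant — and there $\mathcal Z$ is genuinely discontinuous, which is exactly why the continuous process $\tau\mapsto\hat\mu_{\tau,n}$ cannot converge in $\|\cdot\|_\infty$ nor in a Skorokhod metric. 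I would finish by applying the almost-sure representation theorem to the weakly convergent empirical process $\mathbb{G}_n$ on $\ell^{\infty}([\tau_l,\tau_u]\times K)$ — of which $\hat\mu_{\cdot,n}$, the zero of $\psi+\mathbb{G}_n/\sqrt n$, is a measurable functional — to obtain versions with $\widetilde{\mathbb{G}}_n\to\widetilde{\mathbb{G}}$ uniformly almost surely, carrying the uniform identity along so that $h_{\tau,n}:=\sqrt n(\hat\mu_{\tau,n}-\mu_\tau)=\bar a_{\tau,n}^{-1}\big(\widetilde Z_{n,\tau}+o(1)\big)$ with $\widetilde Z_n\to\widetilde Z=\widetilde{\mathbb{G}}(\cdot,\mu_\cdot)$ uniformly a.s., and then verifying \emph{pathwise} the two defining requirements of $\dif_{hypi}$-convergence from \citet{buecher2014}: that $h_{\cdot,n}$ epi-converges to the lower-semicontinuous hull of $\mathcal Z$ and hypo-converges to its upper-semicontinuous hull. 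Concretely this means trapping $\liminf_n h_{\tau_n,n}$ and $\limsup_n h_{\tau_n,n}$ along arbitrary $\tau_n\to\tau$ (for the epi lower bound and the hypo upper bound) and constructing suitable approaching sequences (for the epi upper bound and the hypo lower bound), using $\widetilde Z_{n,\tau_n}\to\widetilde Z_\tau$, $\mu_{\tau_n}\to\mu_\tau$, and the fact that $\bar a_{\tau_n,n}\in[c,1-c]$ can accumulate only at $\tau+(1-2\tau)F(\mu_\tau)$ or $\tau+(1-2\tau)F(\mu_\tau-)$, the relevant one being pinned to the sign of $\lim\widetilde Z_{n,\tau_n}$; these are precisely the one-sided values that the semicontinuous hulls of $\mathcal Z$ take at $J$, so that, incidentally, $\mathcal Z$ and the pointwise limit of the previous step have the same hulls and coincide in $\dif_{hypi}$, which is why the limit may be reported in the clean form of the statement. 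The Donsker/equicontinuity steps and the identifiability bound are routine; I expect the real work to be this last verification — reconciling the non-uniform, sign-dependent behaviour of $\bar a_{\tau,n}$ near $J$ with the epi/hypo requirements — together with the measurability and tightness points that \citet{buecher2014} supplies.
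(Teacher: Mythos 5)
Your analytic core coincides with the paper's: the exact ``average--slope'' identity you write for $\bar a_{\tau,n}$ is precisely the paper's representation of increments of $I_\tau(\cdot,F)$ (and of $\psi_0^{\Inv}$), the Donsker/equicontinuity step with Lipschitz classes and uniform consistency is the paper's Step~1, and the one-sided analysis of the averaged $F$-term (liminf bounded below by $F(\mu_\tau-)$, limsup above by $F(\mu_\tau)$, via Fatou) is exactly how the paper verifies the key hypi statement. Where you genuinely diverge is the conclusion: the paper isolates a \emph{deterministic} lemma --- semi-Hadamard differentiability of $\psi_0^{\Inv}$ at $0$ tangentially to $\mathcal{C}[\tau_l,\tau_u]$ with respect to $\dif_{hypi}$, proved via Corollary~A.7 of \citet{buecher2014} (the dense set where $F(\mu_\cdot)$ is continuous supplies the recovery sequences for free) together with hypi-stability of products/quotients --- and then invokes the generalized functional delta method (Theorem~B.7 of \citet{buecher2014}). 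You instead propose to inline that machinery: an almost-sure representation of the empirical process followed by a pathwise verification of the epi/hypo conditions for the random quotient $\bar a_{\cdot,n}^{-1}(\widetilde Z_{n,\cdot}+o(1))$. This can be made to work, but it is strictly harder than the paper's route: you must handle random, sign-dependent shrinking windows, the outer-probability/measurability and perfect-map issues behind the a.s. representation in a nonseparable space, and the passage from pathwise hypi-convergence back to Hofmann-J{\o}rgensen weak convergence in $(L^\infty,\dif_{hypi})$ --- exactly the points that Theorem~B.7 packages. Your sketch leaves this heaviest part, the pathwise epi/hypo verification, essentially unproved, whereas in the paper it is the content of the differentiability lemma and is carried out for deterministic sequences $t_n\searrow 0$, $\varphi_n\to\varphi$ only.

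One concrete inaccuracy to fix: the claim that $\bar a_{\tau_n,n}$ ``can accumulate only at $\tau+(1-2\tau)F(\mu_\tau)$ or $\tau+(1-2\tau)F(\mu_\tau-)$'' is false. When $\tau_n\to\tau_0$ with $F$ discontinuous at $\mu_{\tau_0}$, the $O(n^{-1/2})$ window between $\mu_{\tau_n}$ and $\hat\mu_{\tau_n,n}$ can straddle $\mu_{\tau_0}$ with limiting proportions $\lambda$ and $1-\lambda$ on either side, so every convex combination $\tau_0+(1-2\tau_0)\{\lambda F(\mu_{\tau_0}-)+(1-\lambda)F(\mu_{\tau_0})\}$ is a possible accumulation point. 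This does not destroy your plan --- the epi/hypo \emph{bounds} only need the sandwich between the two extreme values, and the \emph{recovery} sequences can be built by letting $\tau_n$ approach $\tau_0$ slowly enough (relative to the uniform $O(n^{-1/2})$ rate) that the window stays entirely on one side of $\mu_{\tau_0}$ --- but as stated the sign-pinning argument is not correct and the construction of the approaching sequences must be spelled out; the paper avoids this entirely by appealing to Corollary~A.7 of \citet{buecher2014}. Your observation that the sign-dependent pointwise limit and $\dot\psi^{\Inv}(Z)$ share the same semicontinuous hulls, hence the same $\dif_{hypi}$-equivalence class, is correct and consistent with the paper's Remark on pointwise evaluation.
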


From Propositions~2.3 and 2.4 in \citet{buecher2014}, hypi-con\-ver\-gence of the expectile process implies ordinary weak convergence of important statistics such as Kol\-mo\-go\-rov-Smirnov or Cram\'{e}r-von Mises type statistics. We let 
\[ \|\varphi\|_{[\tau_l, \tau_u]}= \sup_{\tau \in [\tau_l, \tau_u]} |\varphi(\tau)|, \quad \varphi \in \ell^\infty[\tau_l, \tau_u],\]
denote the supremum norm on $\ell^\infty[\tau_l, \tau_u]$. 
\begin{corollary}\label{cor:derivedstatistics}
If $\E{Y^2} < \infty$, then we have as $n \to \infty$ that
\begin{equation*}
		\sqrt{n}\big\Vert\hat{\mu}_{\cdot,n}-\mu_{\cdot} \big\Vert_{[\tau_l, \tau_u]} \stackrel{\mathcal{L}}{\to} \big\Vert \dot{\psi}^{\Inv}\big(Z\big) \big\Vert_{[\tau_l, \tau_u]}.
	\end{equation*}
Further, for $p \geq 1$ and a bounded, non-negative weight function $w$ on $[\tau_l, \tau_u]$, 
\begin{equation*}
		n^{p/2}\, \int_{\tau_l}^{\tau_u} \big|\big(\hat{\mu}_{\tau,n}-\mu_{\tau}\big)\big|^p\, w(\tau)\, \mathrm{d} \tau 
		\stackrel{\mathcal{L}}{\to} 
		\int_{\tau_l}^{\tau_u} \big|\big(\dot{\psi}^{\Inv}(Z)\big)(\tau) \big|^p\, w(\tau)\, \dif \tau .
\end{equation*}

\end{corollary}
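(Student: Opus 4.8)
The plan is to derive both limit laws directly from Theorem~\ref{thm:main_thm} via the continuous mapping theorem for weak convergence in semimetric spaces, using the continuity of the two functionals $\varphi\mapsto\|\varphi\|_{[\tau_l,\tau_u]}$ and $\varphi\mapsto\int_{\tau_l}^{\tau_u}|\varphi(\tau)|^p\,w(\tau)\,\dif\tau$ with respect to the hypi-semimetric $\dif_{hypi}$, as quantified in Propositions~2.3 and~2.4 of \citet{buecher2014}. A preliminary observation needed for both parts is that the limit process has almost surely bounded sample paths: $(Z_\tau)$ is tight, hence a.s.\ bounded on $[\tau_l,\tau_u]$, and the deterministic factor $\tau\mapsto\big(\tau+(1-2\tau)F(\mu_\tau)\big)^{-1}$ is bounded there, since its reciprocal $\tau+(1-2\tau)F(\mu_\tau)=\tau\big(1-F(\mu_\tau)\big)+(1-\tau)F(\mu_\tau)$ is a convex combination of $\tau$ and $1-\tau$ and hence lies in $\big[\min(\tau_l,1-\tau_u),\max(\tau_u,1-\tau_l)\big]\subset(0,1)$.

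For the supremum statement I would show that $\varphi\mapsto\|\varphi\|_{[\tau_l,\tau_u]}$ is $\dif_{hypi}$-continuous on all of $\ell^\infty[\tau_l,\tau_u]$. Indeed, if $\dif_{hypi}(\varphi_n,\varphi)\to0$ then $\varphi_n$ epi-converges to the lower semicontinuous hull $\underline\varphi$ and hypo-converges to the upper semicontinuous hull $\overline\varphi$; on the compact domain $[\tau_l,\tau_u]$ this forces $\sup\varphi_n\to\sup\overline\varphi=\sup\varphi$ and $\inf\varphi_n\to\inf\underline\varphi=\inf\varphi$, whence $\|\varphi_n\|_{[\tau_l,\tau_u]}=\max\big(\sup\varphi_n,-\inf\varphi_n\big)\to\|\varphi\|_{[\tau_l,\tau_u]}$. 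This is the content of Proposition~2.3 of \citet{buecher2014}. Since the functional is real-valued and continuous everywhere, the continuous mapping theorem applied to Theorem~\ref{thm:main_thm} gives the first claim.

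For the integral statement I would invoke Proposition~2.4 of \citet{buecher2014}, according to which $\varphi\mapsto\int_{\tau_l}^{\tau_u}|\varphi|^p\,w\,\dif\tau$ is continuous at every bounded $\varphi\in\ell^\infty[\tau_l,\tau_u]$ whose set of discontinuities is Lebesgue-null: for such $\varphi$, $\dif_{hypi}(\varphi_n,\varphi)\to0$ gives $\underline\varphi(\tau)\le\liminf_n\varphi_n(\tau)\le\limsup_n\varphi_n(\tau)\le\overline\varphi(\tau)$ at each continuity point of $\varphi$, i.e.\ $\varphi_n\to\varphi$ Lebesgue-almost everywhere, and together with uniform boundedness and the fact that $w\,\dif\tau$ is absolutely continuous with respect to Lebesgue measure (as $w$ is bounded) dominated convergence yields convergence of the integrals. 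It then remains to verify that $\dot\psi^{\Inv}(Z)$ almost surely falls into this continuity set: boundedness was noted above, and since $(Z_\tau)$ has continuous paths the only possible discontinuities of $\tau\mapsto\big(\tau+(1-2\tau)F(\mu_\tau)\big)^{-1}Z_\tau$ are those of $\tau\mapsto F(\mu_\tau)$, which occur only at the $\tau$ for which $\mu_\tau$ is an atom of $F$; there are at most countably many such $\tau$ because $\tau\mapsto\mu_\tau$ is continuous and strictly increasing (the problem being trivial for degenerate $Y$) and $F$ has at most countably many atoms, and a countable set is Lebesgue-null. Hence the hypothesis of Proposition~2.4 holds with probability one and the continuous mapping theorem gives the second claim.

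The step that genuinely requires care is the one performed by Proposition~2.4, namely passing from $\dif_{hypi}$-convergence to almost-everywhere convergence \emph{together with} a domination that is admissible inside the Hofmann-J\o rgensen weak-convergence framework; this is precisely where the structure of the limit --- continuous modulo countably many jumps, hence a.e.\ continuous --- is indispensable, the integral functional being \emph{not} $\dif_{hypi}$-continuous at a general bounded function (for instance at hypi-limits of rapidly oscillating functions). I would therefore not reprove Proposition~2.4 but only check its hypotheses as above. A minor point is that $\dif_{hypi}$ is only a semimetric, so one uses the semimetric-space version of the continuous mapping theorem; since both target functionals are real-valued this is routine and produces ordinary weak convergence $\stackrel{\mathcal{L}}{\to}$.
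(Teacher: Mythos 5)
Your proposal is correct and follows essentially the same route as the paper, which deduces the corollary from Theorem~\ref{thm:main_thm} by applying Propositions~2.3 and~2.4 of \citet{buecher2014} (continuity of the supremum and $L^p$-type functionals under the hypi-semimetric) together with the continuous mapping theorem. Your additional verification that $\dot{\psi}^{\Inv}(Z)$ is almost surely bounded and almost everywhere continuous (discontinuities only at the at most countably many $\tau$ with $F$ discontinuous at $\mu_\tau$) is exactly the hypothesis check implicit in the paper's one-line argument.
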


\begin{remark}\label{rem:pointeval}
Evaluation at a given point $x$ is only a continuous operation under the hypi-semi\-me\-tric if the limit function is continuous at $x$, see Proposition 2.2 in \citet{buecher2014}. In particular, this does not apply to the expectile process if the distribution function $F$ is discontinuous at $\mu_\tau$. Indeed, Theorem 7 in \citet{holzmann2016} shows that the weak limit of the empirical expectile is not normal in this case. 
\end{remark}
Next we turn to the validity of the bootstrap. Given $n \in \N$ let $Y_1^*, \ldots ,Y_n^*$ denote a sample drawn from $Y_1, \ldots, Y_n$ with replacement, that is, having distribution function $F_n$. Let $F_n^*$ denote the empirical distribution function of $Y_1^*, \ldots ,Y_n^*$, and let $\mu_{\tau,n}^* = \mu_\tau\big(F_n^*\big)$ denote the bootstrap expectile at level $\tau \in (0,1)$. 

\begin{theorem}\label{thm:main_thm_boot}
	Suppose that $\E{Y^2} < \infty$. Then, almost surely, conditionally on $Y_1, Y_2, \ldots $ the standardized bootstrap expectile process $\tau \mapsto \sqrt{n}\big( \mu_{\tau, n}^*-\hat \mu_{\tau,n}\big)$, $\tau \in [\tau_l, \tau_u]$, converges weakly in $\big(L^{\infty}[\tau_l,\tau_u], \dif_{hypi}\big)$ to $\big(\dot{\psi}^{\Inv}(Z)(\tau)\big)_{\tau\in[\tau_l,\tau_u]}$, where the map $\dot{\psi}^{\Inv} $ and the process $(Z_\tau)_{\tau\in[\tau_l,\tau_u]}$ are as in Theorem \ref{thm:main_thm}. 
\end{theorem}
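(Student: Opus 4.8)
The plan is to mirror the proof of Theorem~\ref{thm:main_thm}, replacing the empirical distribution function $F_n$ and the associated empirical process $\sqrt{n}(F_n - F)$ by the bootstrap versions $F_n^*$ and $\sqrt{n}(F_n^* - F_n)$, and then invoking the conditional multiplier / bootstrap central limit theorem together with the same (generalized) delta-method argument. Concretely, I would first recall that the map sending a distribution function $G$ (with finite second moment) to its expectile curve $\tau \mapsto \mu_\tau(G)$ is defined implicitly through the identification functional $\psi(\mu)(\tau) = \int I_\tau(\mu(\tau), y)\, \dif G(y)$, and that Theorem~\ref{thm:main_thm} must have been proved by showing that this functional is suitably (Hadamard- or hypi-) differentiable with inverse derivative $\dot\psi^{\Inv}$. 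The key analytic input, already established for the direct statement, is an expansion of the form $\sqrt{n}(\hat\mu_{\cdot,n} - \mu_\cdot) = \dot\psi^{\Inv}\big(\mathbb{G}_n\big) + o_{\dif_{hypi}}(1)$, where $\mathbb{G}_n$ is a linear functional of the empirical process evaluated at the family $\{y \mapsto I_\tau(\mu_\tau, y)\}_{\tau}$; the bootstrap analogue replaces $\mathbb{G}_n$ by $\mathbb{G}_n^*$, the same linear functional of $\sqrt{n}(F_n^* - F_n)$.

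The steps, in order, are: (i) verify that the class $\mathcal{I} = \{ y \mapsto I_\tau(\mu_\tau, y) : \tau \in [\tau_l,\tau_u]\}$ is Donsker (it is, being a finite-dimensional-type, uniformly Lipschitz-in-$\tau$ family with an $L^2(F)$ envelope because $\E{Y^2}<\infty$), so that the conditional bootstrap CLT of van der Vaart--Wellner applies and $\mathbb{G}_n^* \rightsquigarrow Z$ in $\ell^\infty[\tau_l,\tau_u]$ (sup-norm) almost surely, conditionally on the data, with $Z$ the Gaussian process of Theorem~\ref{thm:main_thm}; (ii) establish the stochastic expansion for the bootstrap expectile, i.e. $\sqrt{n}(\mu_{\cdot,n}^* - \hat\mu_{\cdot,n}) = \dot\psi^{\Inv}(\mathbb{G}_n^*) + R_n^*$ with $R_n^* \to 0$ in $\dif_{hypi}$ (indeed in sup-norm) almost surely conditionally; this is the place where one re-runs the argument from the proof of Theorem~\ref{thm:main_thm} with $(F_n^*, F_n)$ in place of $(F_n, F)$, using that $F_n \to F$ weakly a.s.\ and $\mu_\tau(F_n) \to \mu_\tau(F)$ uniformly a.s.\ so that all the pointwise evaluations and the derivative coefficient $\tau + (1-2\tau)F_n(\hat\mu_{\tau,n})$ converge to their population counterparts; (iii) since $\dot\psi^{\Inv}$ is a fixed continuous (indeed bounded-linear, up to the deterministic factor) map from $(\ell^\infty,\|\cdot\|)$ into $(\ell^\infty,\dif_{hypi})$ — note $\dot\psi^{\Inv}(Z)$ is generally discontinuous in $\tau$ exactly where $F$ jumps at $\mu_\tau$, which is why $\dif_{hypi}$ rather than the sup-norm is the right target — apply the continuous mapping theorem for the conditional bootstrap to conclude $\sqrt{n}(\mu_{\cdot,n}^* - \hat\mu_{\cdot,n}) \rightsquigarrow \dot\psi^{\Inv}(Z)$ in $(L^\infty[\tau_l,\tau_u], \dif_{hypi})$, a.s.\ conditionally on $Y_1, Y_2, \ldots$. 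Care is needed with the outer-probability / measurability bookkeeping that the Hofmann-J\o rgensen notion of conditional weak convergence demands, and with the $L^\infty$ versus $\ell^\infty$ distinction appearing in the statement (the bootstrap process, being a function of the resampling randomness given the data, is handled in $L^\infty$).

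The main obstacle is step (ii): transferring the $\dif_{hypi}$-expansion uniformly in $\tau$ from the i.i.d.\ setting to the bootstrap setting, almost surely conditionally. One must control the remainder in the implicit-function expansion when both the ``true'' object $F_n$ and the perturbation $F_n^*$ are random and only converge along almost every sample path; in particular the denominator $\tau + (1-2\tau)F_n(\hat\mu_{\tau,n})$ must be shown to be bounded away from zero uniformly in $\tau$ and in $n$ (a.s.), which follows from $\tau_l>0$, $\tau_u<1$ and the a.s.\ uniform convergence $\hat\mu_{\cdot,n}\to\mu_\cdot$ and $F_n\to F$ at continuity points, combined with the monotonicity of $F$. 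Once the expansion is in place, the rest is a routine combination of the conditional multiplier CLT and the continuous mapping theorem, exactly as in the non-bootstrap proof; indeed one can phrase the whole argument so that Theorem~\ref{thm:main_thm} and Theorem~\ref{thm:main_thm_boot} are two instances of a single lemma about hypi-differentiability of $F \mapsto \mu_\cdot(F)$ applied to the empirical and the bootstrap empirical measures respectively.
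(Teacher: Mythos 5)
Your overall strategy (conditional bootstrap CLT for the identification functions, re-running the differentiability argument with $(F_n^*,F_n)$ in place of $(F_n,F)$, then a mapping theorem) is the same as the paper's, but the two claims you substitute for the hard step are not correct as stated. First, the expansion $\sqrt n\big(\mu^*_{\cdot,n}-\hat\mu_{\cdot,n}\big)=\dot\psi^{\Inv}(\mathbb{G}_n^*)+R_n^*$ with $R_n^*\to0$ ``indeed in sup-norm'' fails exactly in the case of interest, namely when $F$ jumps at some $\mu_{\tau_0}$. By the exact increment representation (the bootstrap analogue of Lemma~\ref{lem:rep_diff_psiinv}), the bootstrap increment at $\tau$ equals a linear term divided by $\tau+(1-2\tau)\int_0^1F_n\big(\hat\mu_{\tau,n}+s(\mu^*_{\tau,n}-\hat\mu_{\tau,n})\big)\,\dif s$, and at $\tau_0$ this random denominator oscillates between (approximately) $\tau_0+(1-2\tau_0)F(\mu_{\tau_0}-)$ and $\tau_0+(1-2\tau_0)F(\mu_{\tau_0})$ according to the sign of $\mu^*_{\tau_0,n}-\hat\mu_{\tau_0,n}$; the discrepancy with the fixed coefficient in $\dot\psi^{\Inv}$ is of the order of the jump of $F$ times an $O_{\prob_n^*}(1)$ variable, hence not $o(1)$ uniformly. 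If your sup-norm expansion were true, the bootstrap process would converge weakly under $\Vert\cdot\Vert$, point evaluation at $\tau_0$ would be continuous, and the bootstrap marginal there would be asymptotically normal --- contradicting the known failure of the $n$ out of $n$ bootstrap at such points (see the remark following Theorem~\ref{thm:main_thm_boot} and \citet{knightboot}, and Remark~\ref{rem:pointeval}). For the same reason your assertion that the derivative coefficient $\tau+(1-2\tau)F_n(\hat\mu_{\tau,n})$ ``converges to its population counterpart'' cannot be meant pointwise or uniformly; at jump points it does not converge at all.

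What is actually needed, and what the paper supplies, is an asymptotic (almost sure) semi-Hadamard differentiability of the \emph{random} inverse maps $\psi_n^{\Inv}$ (Lemma~\ref{thm:asym_semi_hadam}): for $t_n\searrow0$ and $\varphi_n\to\varphi\in\mathcal{C}[\tau_l,\tau_u]$ in $\dif_{hypi}$ one shows $t_n^{-1}\big(\psi_n^{\Inv}(t_n\varphi_n)-\psi_n^{\Inv}(0)\big)\to\dot\psi^{\Inv}(\varphi)$ in $\dif_{hypi}$, the key point being the hypi-convergence of $\tau\mapsto\int_0^1F_n\big(\hat\mu_{\tau,n}+s(\cdot)\big)\,\dif s$ to $F(\mu_\cdot)$, proved via a Fatou/Glivenko--Cantelli sandwich between $F(\mu_\cdot-)$ and $F(\mu_\cdot)$ together with product and quotient rules for hypi-convergence (Lemma~\ref{lem:diverses} and Corollary~A.7 of \citet{buecher2014}). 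Since the map is random, one then concludes not with the ordinary continuous mapping theorem for the fixed $\dot\psi^{\Inv}$ but with the extended continuous mapping theorem (Theorem~B.3 of \citet{buecher2014}) applied to $g_n(\varphi)=t_n^{-1}(\psi_n^{\Inv}(t_n\varphi)-\psi_n^{\Inv}(0))$, fed with the conditional sup-norm convergence $\sqrt n\big(\psi_n(\mu^*_{\cdot,n})-\psi_n(\hat\mu_{\cdot,n})\big)\to Z$, which itself requires the bootstrap uniform consistency, a (changing-classes or fixed-class) bootstrap CLT and an almost sure equicontinuity bound over shrinking classes (Lemma~\ref{lem:bootstrapfirststep}). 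Your step~(i) with the fixed class $\{y\mapsto I_\tau(\mu_\tau,y)\}$ is a legitimate variant of the paper's changing-classes argument, provided you add the equicontinuity step needed to move the centering from $\mu_\cdot$ to $\hat\mu_{\cdot,n}$; but without the differentiability statement for the random maps (or an equivalent almost-sure representation argument) your steps (ii)--(iii) do not go through as written.
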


\begin{remark*}
The simple $n$ out of $n$ bootstrap does not apply for the empirical expectile at level $\tau$ if $F$ is discontinuous at $\mu_\tau$, see \citet{knightboot} for a closely related result for the quantile. Thus, Theorem \ref{thm:main_thm_boot} is somewhat surprising, but its conclusion is reasonable in view of Remark \ref{rem:pointeval}.  
\end{remark*}

\section{Convergence of the quantile process}

Let $q_\alpha = q_\alpha(F) = \inf\{ x \in \R:\, F(x) \geq \alpha\}$, $\alpha \in (0,1)$, denote the $\alpha$-quantile of the distribution function $F$, and let $\hat q_\alpha = q_\alpha(F_n)$ be the empirical $\alpha$-quantile of the sample $Y_1, \ldots, Y_n$. 

\begin{theorem}\label{thm:conv_quantile}
	Suppose that the distribution function $F$ is absolutely continuous and strictly increasing on $[q_{\alpha_l}, q_{\alpha_u}]$, $0 < \alpha_l < \alpha_u < 1$. Assume that there is a version $f$ of its density which is bounded and bounded away from zero on $[q_{\alpha_l}, q_{\alpha_u}]$, and which admits right- and left-sided limits in every point of $[q_{\alpha_l}, q_{\alpha_u}]$.

	Then the standardized quantile process $\alpha \mapsto \sqrt{n}\big(\hat{q}_{\alpha,n}-q_{\alpha}\big)$, $\alpha \in [\alpha_l ,\alpha_u]$, converges weakly in $(L^{\infty}[\alpha_l,\alpha_u], \dif_{hypi})$ to the process $\big(\nicefrac{V_{\alpha}}{f(q_{\alpha})}\big)_{\alpha\in[\alpha_l,\alpha_u]}$, where 
	$(V_\alpha)_{\alpha\in[\alpha_l,\alpha_u]}$ is a Brownian bridge on $[\alpha_l,\alpha_u]$.
	
Furthermore, as $n \to \infty$ we have that
\begin{equation*}
		\sqrt{n}\big\Vert\hat{q}_{\cdot,n}-q_{\cdot} \big\Vert_{[\alpha_l, \alpha_u]} \stackrel{\mathcal{L}}{\to} \big\Vert \dot{\Psi}^{\Inv}\big(V\big) \big\Vert_{[\alpha_l, \alpha_u]},
	\end{equation*}
as well as 
\begin{equation*}
		n^{p/2}\, \int_{\alpha_l}^{\alpha_u} \big|\hat{q}_{\alpha,n}-q_{\alpha}\big|^p\, w(\alpha)\, \mathrm{d}\, \alpha
		\stackrel{\mathcal{L}}{\to} 
		\int_{\alpha_l}^{\alpha_u} \big|\big(\dot{\Psi}^{\Inv}(V)\big)(\alpha) \big|^p\, w(\alpha)\, \dif \alpha .
	\end{equation*}
for $p \geq 1$ and a bounded, non-negative weight function $w$ on $[\alpha_l, \alpha_u]$.

\end{theorem}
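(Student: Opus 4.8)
The plan is to mirror the structure used for the expectile process (Theorem \ref{thm:main_thm}), since the quantile $q_\alpha$ is also an $M$- or $Z$-estimator-type functional of $F_n$. Concretely, write $q_\alpha = \Psi^{\Inv}(F_n)(\alpha)$ where $\Psi$ is the (pseudo-)inversion map sending a distribution function to its quantile function, and try to obtain the result from a Hadamard-type differentiability statement for $\Psi^{\Inv}$ with respect to the hypi-semimetric, combined with the functional delta method for the hypi-semimetric as developed in \citet{buecher2014}. The classical empirical process CLT gives $\sqrt n (F_n - F) \rightsquigarrow \mathbb{G}\circ F$ in $\ell^\infty(\R)$ with the supremum norm, where $\mathbb{G}$ is a standard Brownian bridge; restricting to a neighbourhood of $[q_{\alpha_l}, q_{\alpha_u}]$ and using that $F$ is strictly increasing and absolutely continuous there, the limit $(\mathbb{G}(F(q_\alpha)))_\alpha = (V_\alpha)_\alpha$ is a Brownian bridge on $[\alpha_l,\alpha_u]$ with continuous paths. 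The target linear map is $\dot\Psi^{\Inv}(\varphi)(\alpha) = \varphi(\alpha)/f(q_\alpha)$, which is exactly the inverse-function-theorem derivative of quantile inversion, except that $f$ need only be one-sided-continuous, so $\alpha\mapsto 1/f(q_\alpha)$ may have jumps and the limit process has discontinuous paths — which is precisely why convergence can fail in the supremum or Skorokhod sense but succeeds under $\dif_{hypi}$.

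The key steps, in order, would be: (i) show that $\alpha \mapsto q_\alpha$ is continuous and strictly increasing on $[\alpha_l,\alpha_u]$ under the hypotheses, so that $q$ maps the interval homeomorphically onto $[q_{\alpha_l}, q_{\alpha_u}]$, and that the empirical quantile $\hat q_{\alpha,n}$ is uniformly consistent on this interval (Glivenko–Cantelli plus strict monotonicity); (ii) establish an exact or asymptotically linear Bahadur-type representation, $\sqrt n(\hat q_{\alpha,n} - q_\alpha) = -\sqrt n (F_n(q_\alpha) - F(q_\alpha))/f(q_\alpha) + R_n(\alpha)$, and control the remainder $R_n$; (iii) because the limit process has jumps, the remainder will not be $o_P(1)$ in supremum norm, so instead the heart of the argument is to show directly that the (random) functions $\alpha\mapsto\sqrt n(\hat q_{\alpha,n}-q_\alpha)$ hypi-converge — i.e. their epigraphs and hypographs converge in the appropriate Fell/Painlevé–Kuratowski sense — to the epigraph and hypograph of the leading term $\alpha\mapsto V_\alpha/f(q_\alpha)$; (iv) conclude weak convergence in $(L^\infty[\alpha_l,\alpha_u],\dif_{hypi})$ by the extended continuous mapping / delta method for the hypi-semimetric in \citet{buecher2014}; and (v) deduce the Kolmogorov–Smirnov and Cramér–von Mises statements from Propositions 2.3 and 2.4 of \citet{buecher2014}, using that $\|\cdot\|_{[\alpha_l,\alpha_u]}$ and the $L^p(w)$ functional are hypi-continuous (the latter because sup- and hypo-limits differ only on a Lebesgue-null set, so the integral is unaffected).

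I expect the main obstacle to be step (iii): making precise the hypi-convergence of the quantile process when only one-sided limits of $f$ are assumed. The difficulty is that at a point $\alpha_0$ where $1/f(q_{\alpha_0}^-)\neq 1/f(q_{\alpha_0}^+)$, the empirical process $\sqrt n(\hat q_{\cdot,n}-q_\cdot)$ — being a step function — oscillates near $\alpha_0$ in a way that does not converge to either one-sided value of the limit, and one must verify that nonetheless the epigraph converges to that of the lower-semicontinuous hull and the hypograph to that of the upper-semicontinuous hull. The natural route is to transfer to the $F_n$-scale: write $\hat q_{\alpha,n}-q_\alpha$ in terms of $F_n(q_\alpha)-\alpha$ and the local behaviour of $F^{\Inv}$, use a Skorokhod/almost-sure-representation construction so that $\sqrt n(F_n-F)$ converges uniformly to $V$ along a subsequence, and then argue pathwise that the relevant liminf/limsup of graphs coincide with the one-sided hulls, exploiting that $f$ has clean one-sided limits so that locally $q_\alpha$ behaves like an affine function with slope $1/f(q_{\alpha}^{\pm})$ on each side. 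Handling the interface between the randomness of $V$ and the deterministic jump structure of $1/f\circ q$ — and checking the asymptotic equicontinuity/tightness needed for the hypi-semimetric — is where the real work lies; everything else is bookkeeping analogous to the expectile case.
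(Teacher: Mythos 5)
Your overall skeleton (hypi delta method, derivative $\varphi\mapsto\varphi/f(q_\cdot)$, limit $V_\cdot/f(q_\cdot)$, derived statistics via Propositions 2.3 and 2.4 of \citet{buecher2014}) matches the paper, but the proposal has a genuine gap exactly at the point you flag as ``where the real work lies'': step \emph{(iii)} is the entire content of the theorem, and neither of your two suggested routes is carried out or would work as stated. The Bahadur-representation route is self-defeating for this mode of convergence: you correctly note that the remainder $R_n$ is \emph{not} $o_{\mathrm P}(1)$ in supremum norm precisely at the discontinuity points of $1/f(q_\cdot)$, but hypi-convergence is not stable under adding a perturbation that is merely small in some weaker sense, so a ``leading term $+$ remainder'' decomposition gives you nothing unless you can control the remainder in a way that respects the epi/hypo hulls --- which is the unproven step. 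The alternative route via an almost-sure (Skorokhod) representation and pathwise Painlev\'e--Kuratowski convergence of epigraphs and hypographs is only a restatement of what has to be shown; the assertion that ``locally $q_\alpha$ behaves like an affine function with slope $1/f(q_\alpha^{\pm})$ on each side'' is exactly the claim that needs a proof, and it is delicate because the increment $\hat q_{\alpha,n}-q_\alpha$ samples $F^{\Inv}$ over a random interval straddling $q_{\alpha}$, so the relevant quantity is an \emph{average} of $1/f$ over that interval, not a one-sided value. Moreover, to feed the result into the Hofmann--J{\o}rgensen weak-convergence framework you still need a differentiability statement valid along \emph{arbitrary} uniformly convergent perturbations $\varphi_n\to\varphi\in\mathcal C[\alpha_l,\alpha_u]$, not just along one almost-surely constructed sequence.

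The paper resolves this by a different, cleaner decomposition that you may want to adopt: via the quantile transformation, $\hat q_{\alpha,n}=F^{\Inv}(G_n^{\Inv}(\alpha))$ with $G_n$ the empirical distribution function of uniforms, so that the driving process $\sqrt n\,(G_n^{\Inv}-G^{\Inv})$ converges to the Brownian bridge $V$ in the \emph{supremum} norm with a continuous limit, and all of the hypi-subtlety is confined to the deterministic outer map. The key lemma (Lemma~\ref{lem:uniform_semi_hadam}) establishes (uniform) semi-Hadamard differentiability of $\nu\mapsto F^{\Inv}(\mathrm{id}+\nu)$ with respect to $\dif_{hypi}$: the increment is written exactly, via a Lebesgue--Stieltjes argument, as
\begin{equation*}
t_n^{-1}\bigl(F^{\Inv}(\alpha_n+\nu_n(\alpha_n)+t_n\varphi_n(\alpha_n))-F^{\Inv}(\alpha_n+\nu_n(\alpha_n))\bigr)
=\varphi_n(\alpha_n)\int_0^1\Bigl[f\bigl\{F^{\Inv}(\alpha_n+\nu_n(\alpha_n)+s\,t_n\varphi_n(\alpha_n))\bigr\}\Bigr]^{-1}\dif s,
\end{equation*}
and then the epi- and hypo-limits are verified directly from the pointwise criteria: Fatou's lemma bounds the liminf and limsup of the integral by $1/f_{\vee}(q_\alpha)$ and $1/f_{\wedge}(q_\alpha)$, and the hulls are attained by explicitly constructed sequences $\alpha_n$ approaching $\alpha$ from below or above (e.g.\ $\alpha_n=\alpha-\|\nu_n\|-t_nC-\varepsilon_n$), exploiting that $f$ has one-sided limits so that $f_{\wedge}(q_\alpha)$ and $f_{\vee}(q_\alpha)$ are the min/max of $f(q_\alpha-),f(q_\alpha),f(q_\alpha+)$. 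Note also that the paper cannot use the dense-continuity shortcut (Corollary~A.7 of \citet{buecher2014}) that works for the expectile process, which is why the defining epi/hypo criteria are checked by hand; your sketch does not notice this distinction. Without supplying an argument of this kind, the proposal does not yet prove the theorem.
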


Let $q_{\alpha,n}^* = q_{\alpha}\big(F_n^*\big)$ denote the bootstrap quantile at level $\alpha \in (0,1)$. 

\begin{theorem}\label{thm:main_boot_quantile}
	Let the assumptions of Theorem~\ref{thm:conv_quantile} be true. Then, almost surely, conditionally on $Y_1, Y_2, \ldots $ the standardized bootstrap quantile process $\alpha \mapsto \sqrt{n}\big( q_{\alpha, n}^*-\hat{q}_{\alpha,n}\big)$, $\alpha \in [\alpha_l, \alpha_u]$, converges weakly in $\big(L^{\infty}[\alpha_l,\alpha_u], \dif_{hypi}\big)$ to $\big(\nicefrac{V_{\alpha}}{f(q_{\alpha})}\big)_{\scriptscriptstyle \alpha\in[\alpha_l,\alpha_u]}$. 
\end{theorem}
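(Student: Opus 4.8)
The plan is to mirror the proof of Theorem~\ref{thm:conv_quantile}, carried out conditionally on the sample. The starting point is the exact identity $q_{\alpha,n}^* = q_\alpha\big(F_n^*\big)$ together with $\hat q_{\alpha,n} = q_\alpha\big(F_n\big)$, which lets us write the standardized bootstrap quantile process as $\alpha\mapsto\sqrt n\big(q_\alpha\big(F_n + n^{-1/2}\sqrt n(F_n^* - F_n)\big) - q_\alpha\big(F_n\big)\big)$, i.e.\ as the image of the bootstrap empirical process $\sqrt n\big(F_n^* - F_n\big)$ under the same inverse-type map that, in Theorem~\ref{thm:conv_quantile}, sends $\sqrt n\big(F_n - F\big)$ to $\sqrt n\big(\hat q_{\cdot,n} - q_\cdot\big)$ --- with the single difference that its base point is now the random, discrete distribution function $F_n$ rather than the smooth $F$. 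Accordingly, what is needed is a version of the deterministic convergence statement behind Theorem~\ref{thm:conv_quantile} that holds uniformly as the base point is perturbed near $F$.

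Two stochastic facts feed into this. First, the conditional bootstrap central limit theorem for the empirical distribution function: since the class $\big\{\,y\mapsto\ind{y\le t}:t\in\R\,\big\}$ is Donsker, there is an event of probability one on which, conditionally on $Y_1,Y_2,\ldots$, the process $\sqrt n\big(F_n^* - F_n\big)$ converges weakly in $\ell^\infty(\R)$ under the supremum norm to the $F$-Brownian bridge $\mathbb G$, which is precisely the weak limit of $\sqrt n\big(F_n - F\big)$ (see, e.g., Section~3.6 of \citet{vdvW2013}). Second, on that same event $\sup_{x\in\R}|F_n(x)-F(x)|\to 0$ by the Glivenko--Cantelli theorem, so the moving base point $F_n$ converges uniformly to $F$, whose density $f$ is bounded, bounded away from zero, and possesses left- and right-sided limits on $[q_{\alpha_l},q_{\alpha_u}]$.

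The deterministic ingredient is the following uniform-in-base-point statement: if $G_n$ are distribution functions with $\sup_x|G_n(x)-F(x)|\to 0$, and $h_n\to h$ uniformly on $[q_{\alpha_l},q_{\alpha_u}]$ with $h$ continuous, then the functions $\alpha\mapsto\sqrt n\big(q_\alpha\big(G_n + n^{-1/2}h_n\big) - q_\alpha\big(G_n\big)\big)$ converge in $\dif_{hypi}$ on $[\alpha_l,\alpha_u]$ to $\alpha\mapsto -\,h\big(q_\alpha\big)\big/f\big(q_\alpha\big)$; here the pointwise value at the at most countably many jumps of $\alpha\mapsto f(q_\alpha)$ is irrelevant, since it affects neither the lower- nor the upper-semicontinuous hull, so the limit is the one stated for \emph{any} admissible version of $f$. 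Granting this, the extended continuous mapping theorem for weak convergence in the sense of Hofmann--J\o rgensen --- in the form used for Theorem~\ref{thm:conv_quantile}, with values in $\big(L^\infty[\alpha_l,\alpha_u],\dif_{hypi}\big)$ --- applied conditionally and along the probability-one event fixed above, with $G_n = F_n$ and $h_n = \sqrt n\big(F_n^* - F_n\big)$, gives that almost surely, conditionally on $Y_1,Y_2,\ldots$, the standardized bootstrap quantile process converges weakly in $\big(L^\infty[\alpha_l,\alpha_u],\dif_{hypi}\big)$ to $\big(V_\alpha/f(q_\alpha)\big)_{\alpha\in[\alpha_l,\alpha_u]}$, with $V_\alpha := -\mathbb G(q_\alpha)$ a Brownian bridge on $[\alpha_l,\alpha_u]$; this is the assertion.

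\textbf{Main obstacle.} Everything else being routine given the above, the work lies in the uniform-in-base-point deterministic lemma. Because $f$ has only one-sided limits, the expansion of $q_\alpha\big(G_n + n^{-1/2}h_n\big) - q_\alpha\big(G_n\big)$ near a jump of $f$ carries local slope $f\big(q_\alpha-\big)$ or $f\big(q_\alpha+\big)$ according to the sign of $h_n\big(q_\alpha\big)$, and one must show that these one-sided contributions, assembled over $\alpha$ and with the base point perturbed away from $F$, still yield functions whose epigraphs and hypographs converge to those of $V_\cdot/f(q_\cdot)$, uniformly over admissible $G_n$. This is the $\dif_{hypi}$-analogue of upgrading Hadamard differentiability to its uniform version in the functional delta method for the bootstrap, and it is where the hypothesis that $f$ admits left- and right-sided limits everywhere is used in full. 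The residual measurability and outer-expectation bookkeeping needed to turn weak convergence of $\sqrt n\big(F_n^* - F_n\big)$ into the ``almost surely, conditionally'' conclusion is handled exactly as in the proof of Theorem~\ref{thm:main_thm_boot}.
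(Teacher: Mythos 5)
Your overall architecture (bootstrap empirical process plus a base-point-uniform differentiability statement plus an extended continuous mapping / delta-method argument, with the conditional bookkeeping as in Theorem~\ref{thm:main_thm_boot}) is the right shape, and it is essentially the structure of Theorem~3.9.13 in \citet{vdvW2013} that the paper also invokes. But there is a genuine gap: the entire technical content of the theorem is the deterministic lemma you state and then defer as the ``main obstacle'', and in the generality you formulate it that lemma is false. If $G_n$ are arbitrary distribution functions with $\sup_x|G_n(x)-F(x)|\to 0$, nothing prevents $G_n$ from having a flat piece of length comparable to $t_n=n^{-1/2}$ near $q_\alpha$. Concretely, take $F$ uniform on $[0,1]$, let $G_n$ coincide with $F$ except for a flat stretch of length $\epsilon_n=t_n$ at level $\tfrac12$ (compensated by a slightly steeper piece afterwards), and take $h\equiv-1$ near $x=\tfrac12$; then for $\alpha_n=\tfrac12-u\,t_n$, $u\in(0,1]$, one computes
\begin{equation*}
t_n^{-1}\big(q_{\alpha_n}(G_n+t_nh)-q_{\alpha_n}(G_n)\big)\;\longrightarrow\;1+u\;>\;1=-\,h(q_{1/2})/f(q_{1/2}),
\end{equation*}
so even the hypo-convergence part of $\dif_{hypi}$-convergence to $-h(q_\cdot)/f(q_\cdot)$ fails. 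Thus ``$G_n\to F$ uniformly'' is not a sufficient hypothesis: your lemma would only have a chance for $G_n=F_n$ after adding and verifying fine a.s.\ properties of the empirical distribution function at scale $n^{-1/2}$ (maximal spacings and local oscillations of $F_n$ being $o(n^{-1/2})$ on the relevant interval), none of which appears in your proposal. Since the rest of your argument is routine once this lemma is granted, the proof as written does not go through.

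The paper avoids this difficulty entirely by a different decomposition: via the quantile transformation it writes $\sqrt{n}\big(q^*_{\cdot,n}-\hat q_{\cdot,n}\big)=\sqrt{n}\big[F^{\Inv}\{(G_n^*)^{\Inv}\}-F^{\Inv}\{G_n^{\Inv}\}\big]$ with $G_n, G_n^*$ the uniform empirical and bootstrap distribution functions, so the discrete, fluctuating objects never get inverted; they only enter as arguments of the \emph{fixed} map $\nu\mapsto F^{\Inv}(id+\nu)$. The needed uniform semi-Hadamard differentiability of this map with respect to $\dif_{hypi}$, with base perturbations $\nu_n\to0$ requiring no rate, is exactly Lemma~\ref{lem:uniform_semi_hadam}, and the stochastic input is Theorem~3.6.2 of \citet{vdvW2013} for $\sqrt{n}\{(G_n^*)^{\Inv}-G_n^{\Inv}\}$ together with Glivenko--Cantelli for $\|G_n^{\Inv}-G^{\Inv}\|\to0$, fed into Theorem~3.9.13 of \citet{vdvW2013} extended to semimetric spaces. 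If you want to salvage your direct route, you must either prove a corrected lemma with quantitative regularity of $F_n$ at scale $n^{-1/2}$, or reparametrize as the paper does so that the base-point perturbation sits inside the argument of the smooth $F^{\Inv}$.
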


\section{Appendix: Outlines of the proofs}

%\subsection{Outlines of the proofs of Theorems \ref{thm:main_thm} and \ref{thm:main_thm_boot}}

In this section we present an outline of the proofs, technical details can be found in the supplementary material. 

\begin{proof}[Proof of Theorem \ref{thm:main_thm} (Outline).]
We give an outline of the proof of Theorem \ref{thm:main_thm}. For a distribution function $S$ with finite first moment let
$	[\psi(\varphi,S)](\tau) = -I_\tau(\varphi(\tau),S)$, $\tau \in [\tau_l, \tau_u]$ and $ \varphi \in \ell^\infty[\tau_l, \tau_u]$, 
where $I_{\tau}(x,S)=\int I_{\tau}(x,y)\,\dif S(y)$, and set $\psi_0 = \psi(\,,F)$ and $\psi_n = \psi(\,,\hat F_n)$. In the following we simply write $\|\varphi\|$ instead of $\|\varphi\|_{[\tau_l, \tau_u]}$.

\medskip

\emph{Step 1. Weak convergence of $\sqrt{n}\big(\psi_0(\hat{\mu}_{\cdot,n})-\psi_0(\mu_{\cdot})\big)$ to $Z$ in $\big(\ell^\infty[\tau_l, \tau_u], \|\cdot\| \big)$ }

\medskip

This step uses standard results from empirical process theory based on bracketing properties of Lipschitz-continuous functions. The main issue in the proof of the lemma below is the Lipschitz-continuity of $\tau \mapsto \mu_\tau$, $\tau \in [\tau_l, \tau_u]$, for a general distribution function $F$. 
\begin{lemma}\label{lem:expecexapnsion}
In $(\ell^\infty[\tau_l, \tau_u],\Vert\cdot\Vert)$  we have the weak convergence  
	\begin{equation}\label{convergenceprocess}
	\sqrt{n} \big(\psi_n(\mu_\cdot) - \psi_0(\mu_\cdot)  \big)(\tau) \to Z_\tau,\quad \tau \in [\tau_l, \tau_u].
	\end{equation}
Further, given $\delta_n \searrow 0$ we have as $n \to \infty$ that
\begin{align}\label{eq:maximalprocess}
	\begin{split}
	\sup_{\| \varphi\|_{[\tau_l, \tau_u]} \leq \delta_n}\, \sup_{\tau \in [\tau_l, \tau_u]}\, \sqrt{n}\, \big|&\psi_n(\mu_\cdot + \varphi)(\tau) - \psi_0(\mu_\cdot + \varphi)(\tau) \\
	&- \big[ \psi_n(\mu_\cdot)(\tau) - \psi_0(\mu_\cdot)(\tau) \big] \big| = o_\mathrm{P}(1).
	\end{split}
	\end{align}
\end{lemma}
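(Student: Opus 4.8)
The plan is to establish Lemma~\ref{lem:expecexapnsion} by reducing both statements to standard empirical process machinery applied to the class of identification functions, with the Lipschitz continuity of $\tau \mapsto \mu_\tau$ as the genuine technical input. First I would record the explicit form $I_\tau(x,Y) = \tau(Y-x)\ind{Y\geq x} - (1-\tau)(x-Y)\ind{Y<x} = \tau(Y-x) + (1-\tau - \tau)\,(x-Y)\ind{Y<x}$, so that $I_\tau(x,\cdot)$ is, for fixed $\tau$, a Lipschitz function of $Y$ with constant bounded uniformly in $(\tau,x)$ ranging over the compact set $[\tau_l,\tau_u]\times K$ for any compact $K\subset\R$; moreover $(\tau,x)\mapsto I_\tau(x,y)$ is jointly Lipschitz on such sets, with a Lipschitz constant in $y$ that is integrable against $F$ because $\E{|Y|}<\infty$ (indeed $\E{Y^2}<\infty$). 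This makes $\mathcal{G} = \{\, y \mapsto I_\tau(\mu_\tau,y) : \tau\in[\tau_l,\tau_u]\,\}$ a $P$-Donsker class by the standard bracketing bound for Lipschitz-indexed families (e.g. van der Vaart \& Wellner, Example~19.7), provided $\tau\mapsto\mu_\tau$ is continuous (in fact Lipschitz) so that $\{\mu_\tau : \tau\in[\tau_l,\tau_u]\}$ is a compact subinterval of $\R$. The process in~\eqref{convergenceprocess} is precisely $\mathbb{G}_n$ evaluated on $\mathcal{G}$ (up to sign), so its weak convergence to the tight Gaussian process $Z$ with the stated covariance follows, and continuity of the sample paths of $Z$ follows from the $L^2(P)$-continuity of $\tau\mapsto I_\tau(\mu_\tau,\cdot)$.

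For the Lipschitz continuity of $\tau\mapsto\mu_\tau$ on $[\tau_l,\tau_u]$, the plan is to differentiate the defining relation $\E{I_\tau(\mu_\tau,Y)}=0$ implicitly. Writing $g(\tau,x) = \E{I_\tau(x,Y)} = \tau\,\E{(Y-x)\ind{Y\geq x}} - (1-\tau)\,\E{(x-Y)\ind{Y<x}}$, one has $\partial g/\partial\tau = \E{(Y-x)\ind{Y\geq x}} + \E{(x-Y)\ind{Y<x}} = \E{|Y-x|}$, which is finite and continuous in $x$, and $\partial g/\partial x = -\tau\,\prob(Y\geq x) - (1-\tau)\,\prob(Y<x) = -\tau + (2\tau-1)F(x-) + $ (a term vanishing except at atoms); more carefully $\partial g/\partial x$ exists wherever $F$ is continuous and equals $-\big(\tau + (1-2\tau)F(x)\big)$, which is bounded away from zero uniformly over $x$ in a compact set and $\tau\in[\tau_l,\tau_u]$ since $\tau + (1-2\tau)F(x) \in [\min(\tau,1-\tau), \max(\tau,1-\tau)] \subset [\tau_l\wedge(1-\tau_u), \tau_u\vee(1-\tau_l)]$. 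Since $g$ is monotone and Lipschitz in $x$ with a derivative bounded below in absolute value, and Lipschitz in $\tau$, a quantitative implicit-function argument (or direct estimation: $0 = g(\tau,\mu_\tau)-g(\tau',\mu_{\tau'})$, add and subtract $g(\tau,\mu_{\tau'})$, bound) gives $|\mu_\tau - \mu_{\tau'}| \leq C|\tau-\tau'|$ with $C$ depending only on $\tau_l,\tau_u$ and $\E{|Y|}$ and the lower bound on $|\partial g/\partial x|$. This also yields that $\dot\psi^{\Inv}$ is well-defined and that $\tau\mapsto\tau+(1-2\tau)F(\mu_\tau)$ is bounded away from zero.

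For the stochastic equicontinuity statement~\eqref{eq:maximalprocess}, the plan is to view the inner supremum as $\sup$ over the class $\mathcal{G}_{\delta_n} = \{\, y\mapsto I_\tau(\mu_\tau + \varphi(\tau),y) - I_\tau(\mu_\tau,y) : \tau\in[\tau_l,\tau_u],\ \|\varphi\|\leq\delta_n\,\}$ of the empirical process $\mathbb{G}_n$, and to invoke the asymptotic equicontinuity of $\mathbb{G}_n$ on a Donsker class: because $\mathcal{G}\cup(\mathcal{G} + \mathcal{G}_{\delta})$ for fixed small $\delta$ is Donsker (same Lipschitz-bracketing argument, now over the slightly enlarged compact $x$-range $[q_{\tau_l}-\delta, q_{\tau_u}+\delta]$), and since the Lipschitz-in-$y$ bound gives $\E{\big(I_\tau(\mu_\tau+\varphi(\tau),Y) - I_\tau(\mu_\tau,Y)\big)^2} \leq C\,\|\varphi\|^2 \leq C\delta_n^2 \to 0$, the envelope-in-$L^2$ of $\mathcal{G}_{\delta_n}$ shrinks to zero; asymptotic equicontinuity of the limiting $Z$-type process along shrinking $L^2$-balls then forces $\sup_{\mathcal{G}_{\delta_n}}|\mathbb{G}_n| = o_{\mathrm P}(1)$. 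I expect the main obstacle to be the rigorous derivation of the Lipschitz continuity of $\tau\mapsto\mu_\tau$ for a \emph{general} $F$ (no density, possible atoms): one cannot simply apply a smooth implicit function theorem, so the argument must proceed through one-sided derivatives / monotonicity of $g(\tau,\cdot)$ and a careful uniform lower bound on the (possibly one-sided) slope, handling the jumps of $F$ at candidate expectile locations — this is exactly the "main issue" flagged in the text, and everything downstream (Donsker property, equicontinuity) is routine once compactness and Lipschitzness of $\{\mu_\tau\}$ are in hand.
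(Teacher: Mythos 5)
Your proposal is correct and follows essentially the same route as the paper: weak convergence via the bracketing bound for Lipschitz-in-parameter classes (Example 19.7 of van der Vaart) after establishing Lipschitz continuity of $\tau\mapsto\mu_\tau$ through the one-sided-derivative/monotonicity lower bound on $x\mapsto \E{I_\tau(x,Y)}$, and the negligibility statement via the shrinking-envelope difference class $\{I_\tau(\mu_\tau+x,\cdot)-I_\tau(\mu_\tau,\cdot):|x|\le\delta_n\}$. The only cosmetic differences are that the paper quantifies the inverse bound via an auxiliary result (Corollary~1 of Beyn) rather than your direct add-and-subtract estimate, and controls the difference class with the bracketing maximal inequality (Corollary~19.35) plus Markov instead of invoking asymptotic equicontinuity of $\mathbb{G}_n$; both are interchangeable instances of the same machinery.
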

Since $\psi_0(\mu_\cdot) = \psi_n(\hat \mu_{\cdot,n}) = 0$, (\ref{eq:maximalprocess}) and the uniform consistency of $\hat \mu_{\cdot,n}$ from Theorem 1 in \citet{holzmann2016} give 
	\begin{align*}%\label{eq:expansion}
	%\begin{split}
	%
	\sqrt{n} \big(\psi_0(\hat \mu_{\cdot,n}) - \psi_0(\mu_\cdot)  \big) & = \sqrt{n} \big(\psi_0(\hat \mu_{\cdot,n}) - \psi_n(\hat \mu_{\cdot,n})  \big)\\
		& = - \, \sqrt{n} \big(\psi_n(\mu_{\cdot}) - \psi_0(\mu_{\cdot})  \big) + o_\mathrm{P}(1),
	%
%	\end{split}
%
	\end{align*}
	where the remainder term is in $(\ell^\infty[\tau_l, \tau_u],\Vert\cdot\Vert)$, 
and (\ref{convergenceprocess}) and the fact that $Z$ and $-Z$ have the same law conclude the proof of 
\begin{equation}\label{eq:firstconv}
\sqrt{n}\big(\psi_0(\hat{\mu}_{\cdot,n})-\psi_0(\mu_{\cdot})\big) \to Z \quad \text{ weakly in } \big(\ell^\infty[\tau_l, \tau_u], \|\cdot\| \big).
\end{equation}

\medskip

{\sl Step 2. Invertibility of $\psi_0$ and semi-Hadamard differentiability of $\psi_0^{\Inv}$ with respect to $\dif_{hypi}$.} 

\medskip

The first part of Step 2. is observing the following lemma. 

\begin{lemma}\label{lem:invertable}
	The map $\psi_0$ is invertible. Further $\psi_0^{\Inv}(\varphi) \in \ell^{\infty}[\tau_l, \tau_u]$ for any $\varphi\in\ell^{\infty}[\tau_l, \tau_u]$.   	
\end{lemma}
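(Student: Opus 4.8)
The plan is to analyze the map $\psi_0$ coordinate-wise. For fixed $\tau \in [\tau_l,\tau_u]$ and $x \in \R$ we have
\[
[\psi_0(\varphi)](\tau) = -I_\tau(\varphi(\tau), F) = -\int I_\tau(\varphi(\tau), y)\, \dif F(y),
\]
so $[\psi_0(\varphi)](\tau)$ depends on $\varphi$ only through its value $\varphi(\tau)$. Thus invertibility of $\psi_0$ on $\ell^\infty[\tau_l,\tau_u]$ reduces to showing that for each $\tau$ the scalar function $g_\tau : x \mapsto -I_\tau(x,F)$ is a bijection of $\R$ onto $\R$. First I would observe that $x \mapsto I_\tau(x,y)$ is continuous and strictly decreasing in $x$ for every fixed $y$ (it equals $-\tau(x-y)$ for $x \le y$ and $-(1-\tau)(x-y)$ for $x>y$, both strictly decreasing with slopes bounded away from $0$), whence $g_\tau$ is continuous and strictly increasing in $x$; in fact $g_\tau(x) - g_\tau(x') \ge \min(\tau,1-\tau)\,(x-x')$ for $x>x'$, giving coercivity. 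Since $\E{|Y|}<\infty$ the integral is finite for every $x$, and the bounds $-\tau\,\E{(x-Y)^+} - \ldots$ together with dominated convergence show $g_\tau(x) \to \pm\infty$ as $x \to \pm\infty$. Hence $g_\tau$ is a continuous strictly increasing surjection of $\R$, so it has a well-defined inverse $g_\tau^{\Inv}$, and $\psi_0^{\Inv}(\varphi)$ is the pointwise map $\tau \mapsto g_\tau^{\Inv}(\varphi(\tau))$. By definition of the $\tau$-expectile, $g_\tau^{\Inv}(0) = \mu_\tau$.

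It remains to check that $\psi_0^{\Inv}(\varphi) \in \ell^\infty[\tau_l,\tau_u]$ whenever $\varphi \in \ell^\infty[\tau_l,\tau_u]$, i.e. that $\sup_\tau |g_\tau^{\Inv}(\varphi(\tau))| < \infty$. For this I would use the uniform coercivity: from $g_\tau(x) - g_\tau(\mu_\tau) \ge \min(\tau,1-\tau)\,(x-\mu_\tau)$ for $x > \mu_\tau$ (and the symmetric bound below), together with $\min(\tau,1-\tau) \ge \min(\tau_l, 1-\tau_u) =: c_0 > 0$ on the compact interval, one gets $|g_\tau^{\Inv}(u) - \mu_\tau| \le |u|/c_0$ for all $\tau$ and all $u \in \R$. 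Since $\tau \mapsto \mu_\tau$ is bounded on $[\tau_l,\tau_u]$ (indeed continuous, as used in Step 1), it follows that $\|\psi_0^{\Inv}(\varphi)\| \le \sup_\tau |\mu_\tau| + \|\varphi\|/c_0 < \infty$, which is the claim.

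I do not expect any serious obstacle here: the two ingredients — strict monotonicity and coercivity of $g_\tau$, uniform in $\tau$ — are elementary consequences of the piecewise-linear form of $I_\tau(x,y)$ and the finite first moment. The only mild subtlety is making the coercivity bound uniform in $\tau$ over the compact set $[\tau_l,\tau_u]$, which is handled by bounding $\min(\tau,1-\tau)$ below by $c_0>0$; everything else is bookkeeping. The existence and boundedness of $\tau \mapsto \mu_\tau$ may be cited from the preceding discussion or from \citet{holzmann2016}.
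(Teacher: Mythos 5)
Your argument is correct, and the first half (invertibility) is essentially the paper's own: you derive strict monotonicity of $x\mapsto -I_\tau(x,F)$ with slope bounded below by $\min(\tau,1-\tau)\ge\min(\tau_l,1-\tau_u)>0$ from the piecewise-linear form of $I_\tau(x,y)$, which is exactly the content of the paper's representation \eqref{eq:rep_dif_I} combined with the bound \eqref{eq:estbasic}, and surjectivity onto $\R$ then follows from coercivity and continuity just as in the paper. Where you genuinely diverge is the boundedness of $\psi_0^{\Inv}(\varphi)$: the paper fixes $\varphi$, notes that the preimage of $[-\Vert\varphi\Vert,\Vert\varphi\Vert]$ under $I_\tau(\cdot,F)$ is a finite interval $[L_\tau,U_\tau]$, and uses the representation \eqref{eq:rep_I} to show $\tau\mapsto I_\tau(x,F)$ is increasing, so that all solutions lie in $[L_{\tau_l},U_{\tau_u}]$; you instead anchor the inverse at $\mu_\tau$ (where $I_\tau(\mu_\tau,F)=0$) and use the uniform coercivity constant $c_0=\min(\tau_l,1-\tau_u)$ to get the explicit bound $\Vert\psi_0^{\Inv}(\varphi)\Vert\le\sup_\tau|\mu_\tau|+\Vert\varphi\Vert/c_0$. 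Your route needs the boundedness (or Lipschitz continuity, Lemma \ref{lem:lipschitz}) of $\tau\mapsto\mu_\tau$ as an extra input, which the paper's monotonicity-in-$\tau$ argument avoids, but it buys a quantitative estimate that the paper itself essentially re-derives later (see the bound $\Vert\psi_0^{\Inv}(\nu_n)(\cdot)-\mu_\cdot\Vert\le\Vert\nu_n\Vert\,(\min\{\tau_l,1-\tau_u\})^{-1}$ in Lemma \ref{lem:rep_diff_psiinv}), so both proofs are sound and of comparable length.
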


%For the second part of Step 2.~we recall the concept of semi-Hadamard differentiability from \citet{buecher2014}. 
%
%\begin{definition}\label{def:semi_hadam_diff}	
%	Let $(\mathbb{D}, d)$ and $(\mathbb{H}, h)$ be two semi-metric spaces, $\mathbb{D}_{\Psi}\subset \mathbb{D}$ and $\Psi:\mathbb{D}_{\Psi}\to \mathbb{H}$. Given $x\in\mathbb{D}_{\Psi}$ and $\mathbb{W}\subset \mathbb{D}$ the map $\Psi$ is semi-Hadamard differentiable in $x$ tangentially to $\mathbb{W}$, if there is a map $\dot{\Psi}_x:\mathbb{W}\to \mathbb{H}$ fulfilling the following: For every $w\in\mathbb{W}$, every sequence $(t_n)_n\subset (0,\infty)$, $t_n\to 0$ and every sequence $(w_n)_n\subset \mathbb{D}$ such that $x+t_n\,w_n\in\mathbb{D}_{\Psi}$ for all $n$ and $w_n\to w$ it holds that \begin{equation*}
%		t_n^{-1}\big(\Psi(x+t_n\,w_n)-\Psi(x)\big)\to \dot{\Psi}_x(w).
%	\end{equation*}
%%	in $\mathbb{H}$.
%	
%\end{definition}

The next result then is the key technical ingredient in the proof of Theorem \ref{thm:main_thm}. 

\begin{lemma}\label{thm:semi_hadamard_diff}
	The map $\psi_0^{\Inv}$ is semi-Hadamard differentiable with respect to the hypi-semimetric in $0\in \mathcal{C}[\tau_l,\tau_u]$ tangentially to $\mathcal{C}[\tau_l,\tau_u]$ with semi-Hadamard derivative $
	\dot{\psi}^{\Inv} (\varphi) (\tau) = \big(\tau + (1-2\,\tau) F(\mu_\tau)\big)^{-1} \varphi(\tau)$, $\varphi \in \ell^{\infty}[\tau_l, \tau_u]$, that is, we have  
	\begin{equation*}
		t_n^{-1}\big(\psi_0^{\Inv}(t_n\,\varphi_n)-\psi_0^{\Inv}(0)\big)\to \dot{\psi}^{\Inv} (\varphi) (\tau)
	\end{equation*}
	for any sequence $t_n \searrow 0$, $t_n>0$ and $\varphi_n\in\ell^{\infty}[\tau_l,\tau_u]$ with $\varphi_n\to \varphi\in\mathcal{C}[\tau_l,\tau_u]$ with respect to $\dif_{hypi}$.
	
\end{lemma}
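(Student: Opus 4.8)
The starting point is that $\psi_0$ acts pointwise in $\tau$: since $[\psi_0(\varphi)](\tau) = -I_\tau(\varphi(\tau),F)$ depends on $\varphi$ only through the value $\varphi(\tau)$, so does its inverse from Lemma~\ref{lem:invertable}, and $[\psi_0^{\Inv}(\varphi)](\tau) = P_\tau^{\Inv}(\varphi(\tau))$ with $P_\tau(x) := -I_\tau(x,F)$. For fixed $\tau$ the function $P_\tau$ is Lipschitz and strictly increasing with $P_\tau'(x) = \tau + (1-2\tau)F(x) \in [c,1]$ almost everywhere, $c := \min(\tau_l,1-\tau_u) > 0$, and $P_\tau(\mu_\tau) = 0$; hence $P_\tau^{\Inv}$ is $c^{-1}$-Lipschitz with $P_\tau^{\Inv}(0) = \mu_\tau$, and --- as $F$ has one-sided limits at $\mu_\tau$ --- is one-sidedly differentiable at $0$ with right derivative $(\tau + (1-2\tau)F(\mu_\tau))^{-1}$ and left derivative $(\tau + (1-2\tau)F(\mu_\tau^-))^{-1}$. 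I would also record that $\varphi_n \to \varphi$ with respect to $\dif_{hypi}$ together with $\varphi \in \mathcal{C}[\tau_l,\tau_u]$ force $\|\varphi_n - \varphi\| \to 0$, so $M := \sup_n\|\varphi_n\| < \infty$ and $\|\psi_0^{\Inv}(t_n\varphi_n) - \psi_0^{\Inv}(0)\| \le c^{-1}Mt_n$. Throughout I assume $Y$ is not almost surely constant (otherwise the expectile process in Theorem~\ref{thm:main_thm} vanishes identically); this guarantees that $\tau\mapsto\mu_\tau$ is strictly increasing, with $\mu_\tau' = \E{|Y-\mu_\tau|}\big(\tau + (1-2\tau)F(\mu_\tau)\big)^{-1}$ bounded and bounded away from $0$, that is, a bi-Lipschitz homeomorphism onto $[\mu_{\tau_l},\mu_{\tau_u}]$.

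Write $q_n(\tau) := t_n^{-1}\big(P_\tau^{\Inv}(t_n\varphi_n(\tau)) - \mu_\tau\big)$. Integrating $P_\tau'$ between $\mu_\tau$ and $P_\tau^{\Inv}(t_n\varphi_n(\tau))$ and peeling off the one-sided value of $F$ at $\mu_\tau$ matching the sign of $\varphi_n(\tau)$ yields $q_n(\tau) = \ell_n(\tau) + r_n(\tau)$, with $\ell_n(\tau) = (\tau + (1-2\tau)F(\mu_\tau))^{-1}\varphi_n(\tau)$ if $\varphi_n(\tau)\ge 0$ and $(\tau + (1-2\tau)F(\mu_\tau^-))^{-1}\varphi_n(\tau)$ if $\varphi_n(\tau) < 0$, and
\[
	|r_n(\tau)| \le c^{-2}M\big(F(\mu_\tau + c^{-1}Mt_n)-F(\mu_\tau)\big) \quad\text{resp.}\quad c^{-2}M\big(F(\mu_\tau^-)-F(\mu_\tau - c^{-1}Mt_n)\big).
\]
For each fixed $\tau$ the bound tends to $0$, so $q_n(\tau)\to\ell(\tau)$, the analogous expression formed with $\varphi$; and since $\ell_n,\ell$ arise by applying at each $\tau$ a fixed $c^{-1}$-Lipschitz piecewise linear map to $\varphi_n(\tau),\varphi(\tau)$, one has $\|\ell_n-\ell\| \le c^{-1}\|\varphi_n-\varphi\| \to 0$. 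The limit $\ell$ coincides with $\dot\psi^{\Inv}(\varphi)$ wherever $\varphi(\tau)\ge 0$ or $F$ is continuous at $\mu_\tau$, and at the remaining, at most countably many, levels $\ell$ and $\dot\psi^{\Inv}(\varphi)$ have the same left- and right-hand limits, the monotone function $\tau\mapsto F(\mu_\tau)$ having one-sided limits $F(\mu_\tau^-)$ and $F(\mu_\tau^+)=F(\mu_\tau)$. Consequently $\ell$ and $\dot\psi^{\Inv}(\varphi)$ have identical lower- and upper-semicontinuous hulls, so $\dif_{hypi}\big(\ell,\dot\psi^{\Inv}(\varphi)\big)=0$ and it remains to show $q_n\to\ell$ with respect to $\dif_{hypi}$.

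I would establish this by verifying the epi- and hypo-convergence conditions for $q_n$ towards the hulls $\ell_\wedge,\ell^\vee$, treating two regimes. Away from the discontinuities of $\ell$ the convergence is locally uniform: given $\varepsilon>0$, only finitely many atoms of $F$ in $[\mu_{\tau_l},\mu_{\tau_u}]$ have mass $\ge\varepsilon$, and a compactness argument using the bi-Lipschitz property of $\tau\mapsto\mu_\tau$ bounds the displayed remainder by $c^{-2}M\varepsilon + o(1)$ uniformly over the $\tau$ at distance at least $\eta>0$ from the corresponding finitely many ``bad'' levels. Near a bad level $\tau_0$, for $\tau$ with $|\tau-\tau_0|\le Kt_n$ the interval between $\mu_\tau$ and $P_\tau^{\Inv}(t_n\varphi_n(\tau))$ lies in an $O(t_n)$-window around $\mu_{\tau_0}$, on which $P_\tau'(\cdot)=\tau + (1-2\tau)F(\cdot)$ takes values within $o(1)$ of the closed interval with endpoints $\tau_0+(1-2\tau_0)F(\mu_{\tau_0}^-)$ and $\tau_0+(1-2\tau_0)F(\mu_{\tau_0})$; dividing $t_n\varphi_n(\tau)$ by this integrand pins $q_n(\tau)$ inside $[\ell_\wedge(\tau_0),\ell^\vee(\tau_0)]+o(1)$. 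Combining the regimes gives the $\liminf$/$\limsup$ conditions along arbitrary $\tau_n\to\tau$ (letting $\varepsilon\downarrow 0$), while recovery sequences are obtained by taking $\tau_n=\tau$ at continuity points of $\ell$ and $\tau_n=\tau\pm\delta_n$ with $\delta_n\to 0$, $\delta_n/t_n\to\infty$ (chosen to avoid the finitely many $\varepsilon$-large atoms) at the jumps, whence $\dif_{hypi}(q_n,\ell)\to 0$.

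The main obstacle is this last step: upgrading the pointwise convergence of the difference quotients to $\dif_{hypi}$-convergence in the presence of possibly accumulating atoms of $F$. The remainder $r_n$ does not vanish uniformly --- at a level $\tau_0$ where $F$ jumps at $\mu_{\tau_0}$, a perturbation of order $t_n$ in the argument moves $P_{\tau_0}^{\Inv}$ by an amount comparable to $t_n$ divided by one of the two one-sided derivatives, so $q_n$ genuinely oscillates on a $\tau$-window of width $O(t_n)$ about $\tau_0$; this is precisely why the conclusion can hold for $\dif_{hypi}$ but not for $\|\cdot\|$. The crux is the monotonicity-based sandwich confining that oscillation to the interval $[\ell_\wedge(\tau_0),\ell^\vee(\tau_0)]$ cut out by the hulls of $\dot\psi^{\Inv}(\varphi)$, together with the uniformization over the countable set of such levels, which rests on the finiteness of the $\varepsilon$-large atoms and the bi-Lipschitz behaviour of $\tau\mapsto\mu_\tau$.
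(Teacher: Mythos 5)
Your proposal is correct in substance but takes a genuinely different route from the paper. The paper never works with the sign-dependent pointwise limit $\ell$: starting from the same increment representation (its Lemma~\ref{lem:rep_diff_psiinv}), it factors the difference quotient as $\varphi_n/c_{\kappa_n}$ and uses product/quotient stability of hypi-convergence when one factor is continuous (Lemma~\ref{lem:diverses}) to reduce everything to the single statement that $h_n(\tau)=\int_0^1 F\big(\mu_\tau+s\kappa_n(\tau)\big)\,\dif s$ hypi-converges to $F(\mu_\cdot)$; this is then obtained from Corollary~A.7 of \citet{buecher2014}, which — because the limit's hulls are generated by its restriction to the dense set of levels $\tau$ with $F$ continuous at $\mu_\tau$ — requires only the two one-sided bounds $\liminf_n h_n(\tau_n)\ge F(\mu_\tau-)$ and $\limsup_n h_n(\tau_n)\le F(\mu_\tau)$, proved by Fatou plus monotonicity, with no recovery sequences at all. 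You instead verify the epi/hypo definitions directly: your decomposition $q_n=\ell_n+r_n$, the identification $\dif_{hypi}(\ell,\dot\psi^{\Inv}(\varphi))=0$ via matching one-sided limits, the monotone sandwich pinning $q_n$ near a jump level inside $[\ell_\wedge(\tau_0),\ell_\vee(\tau_0)]+o(1)$, and the recovery sequences $\tau_n=\tau\pm\delta_n$ with $t_n\ll\delta_n\ll1$ are all sound (I checked the remainder bound and the limits of the recovery sequences). What your route buys is a self-contained, elementary argument that makes the $O(t_n)$-scale oscillation mechanism explicit; what the paper's route buys is brevity and reusability — the same Corollary~A.7 reduction is recycled almost verbatim for the bootstrap version (Lemma~\ref{thm:asym_semi_hadam}), and it sidesteps recovery sequences entirely. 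Two points you should tighten: (a) as written, sequences $\tau_n\to\tau_0$ with $Kt_n<|\tau_n-\tau_0|<\eta$ fall between your two regimes; they are handled by the very same shrinking-window sandwich you use in regime~2 (replace the $O(t_n)$-window by a $o(1)$-window), which in fact covers \emph{all} sequences and makes the $\varepsilon$-large-atom uniformization and the bound on $r_n$ unnecessary for the $\liminf$/$\limsup$ half — they are only needed if you insist on uniform control; (b) your explicit non-degeneracy assumption on $Y$ (strict monotonicity of $\tau\mapsto\mu_\tau$) plays the same role as the paper's implicit requirement that the continuity set $\mathcal{S}$ be dense, so it is a fair assumption, but the two-sided derivative formula you quote for $\mu_\tau$ is only one-sided at jump levels; all you actually need is the bi-Lipschitz property, which follows from Lemma~\ref{lem:lipschitz} together with the reverse bound $|\mu_\tau-\mu_{\tau'}|\ge\tfrac23\,\E{|Y-\mu_{\tau'}|}\,|\tau-\tau'|$.
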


The proof of the lemma is based on an explicit representation of increments of $\psi_0^{\Inv}$, and novel technical properties of convergence under the hypi-semimetric for products and quotients.

\medskip

\emph{Step 3. Conclusion with the generalized functional delta-method.}

\medskip

From (\ref{eq:firstconv}), Lemma \ref{thm:semi_hadamard_diff} and the generalized functional delta-method, Theorem~B.7 in \citet{buecher2014} we obtain 
\begin{align*}
		\sqrt{n}\big(\hat{\mu}_{\cdot, n}-\mu_{\cdot}\big) = \sqrt{n}\Big(\psi_0^{\Inv}\big(\psi_0(\hat{\mu}_{\cdot,n})\big)-\psi_0^{\Inv}\big(0\big)\Big) \to \dot{\psi}^{\Inv}(Z)	\end{align*}
in $(L^{\infty}[\tau_l,\tau_u], \dif_{hypi})$. 
\end{proof}

\begin{proof}[Proof of Theorem \ref{thm:main_thm_boot} (Outline).]
The steps in the proof are similar to those of Theorem \ref{thm:main_thm}. In the analogous result to Lemma \ref{lem:expecexapnsion} and (\ref{eq:firstconv}), we require the uniform consistency of $\mu_{\cdot,n}^*$ as in \citet{holzmann2016}, Theorem 1. The weak convergence statements require the changing classes central limit theorem, \citet{vdv2000}, Theorem 19.28. 
In the second step we argue directly with the extended continuous mapping theorem, Theorem B.3 in \citet{buecher2014}.
\end{proof}

\begin{proof}[Proof of Theorem \ref{thm:conv_quantile} (Outline).]
Let $G$ be the distribution function of $\mathcal{U}(0,1)$ and $G_n$ the empirical version coming from a sample $U_1, \ldots, U_n$. By the quantile transformation we can write \begin{equation*}
	\sqrt{n}\big(\hat{q}_{\cdot,n}-q_{\cdot}\big) = \sqrt{n}\big(F^{\Inv}(G_n^{\Inv}(\cdot))-F^{\Inv}(G^{\Inv}(\cdot))\big).
\end{equation*}
The process $\sqrt{n}(G_n^{\Inv}-G^{\Inv})$ converges in distribution in $(l^{\infty}[\alpha_l,\alpha_u],\Vert\cdot\Vert_{[\alpha_l, \alpha_u]})$ to $V$, see Example~19.6, \citet{vdv2000}. 

To use a functional delta-method for the hypi-semimetric, we require 
\begin{equation*}
	t_n^{-1}\big(F^{\Inv}(\alpha+t_n\,\varphi_n(\alpha))-F^{\Inv}(\alpha)\big) \to \frac{\varphi(\alpha)}{f(q_{\alpha})}
\end{equation*}
with respect to $\dif_{hypi}$ for $t_n\searrow0$ and $\varphi_n\in\ell^{\infty}[\alpha_l,\alpha_u]$, $\varphi\in\mathcal{C}[\alpha_l,\alpha_u]$ such that $\dif_{hypi}(\varphi_n,\varphi)\to0$, that is the semi-Hadamard-differentiability of the functional $\Psi_0^{\Inv}(\nu)(\alpha)=F^{\Inv}(\alpha+\nu(\alpha))$, $\nu\in\ell^{\infty}[\alpha_l,\alpha_u]$ as in Definition~B.6, \citet{buecher2014}. To also be able to deal with the bootstrap version, we directly show a slightly stronger version, the \emph{uniform} semi-Hadamard-differentiability. 

\begin{lemma}\label{lem:uniform_semi_hadam}
	Let $t_n\searrow 0\in\R$,  $\varphi_n, \nu_n\in\ell^{\infty}[\alpha_l,\alpha_u]$, $\varphi\in\mathcal{C}[\alpha_l,\alpha_u]$ such that $\dif_{hypi}(\varphi_n,\varphi)\to0$ and $\dif_{hypi}(\nu_n,0)\to0$ holds. Then the convergence \begin{equation*}
		t_n^{-1}\Big[F^{\Inv}\big\{id_{[\alpha_l,\alpha_u]}(\cdot)+\nu_n(\cdot)+t_n\,\varphi_n(\cdot)\big\}-F^{\Inv}\big\{id_{[\alpha_l,\alpha_u]}(\cdot)+\nu_n(\cdot)\big\}\Big] \to \frac{\varphi(\cdot)}{f(q_{\cdot})}
	\end{equation*}
	with respect to $\dif_{hypi}$ is true.
\end{lemma}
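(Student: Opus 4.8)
The plan is to reduce the statement to an explicit integral representation of the increments of $F^{\Inv}$ and then to verify the four defining conditions of epi- and hypo-convergence of
$g_n := t_n^{-1}\big[F^{\Inv}(id_{[\alpha_l,\alpha_u]}+\nu_n+t_n\varphi_n)-F^{\Inv}(id_{[\alpha_l,\alpha_u]}+\nu_n)\big]$
towards the lower- and upper-semicontinuous hulls of the target $g := \varphi/f(q_\cdot)$. Since $F$ is absolutely continuous, strictly increasing, with a density version $f$ that is bounded and bounded away from zero (on a slightly enlarged interval accommodating the endpoints), $F^{\Inv}$ restricted there is the genuine inverse $q$, is Lipschitz, hence absolutely continuous, with a.e.\ derivative $\alpha\mapsto 1/f(q_\alpha)$; the exceptional null set is the image under $q$ of the at most countably many discontinuities of $f$. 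The fundamental theorem of calculus then gives
\[
 g_n(\alpha) = \frac{1}{t_n}\int_0^{t_n\varphi_n(\alpha)} \frac{\dif u}{f\big(q_{\alpha+\nu_n(\alpha)+u}\big)} = \int_0^1 \frac{\varphi_n(\alpha)}{f\big(q_{\alpha+\nu_n(\alpha)+s\,t_n\varphi_n(\alpha)}\big)}\,\dif s .
\]
Moreover $\dif_{hypi}$-convergence to a bounded function forces $\sup_n\|\varphi_n\|<\infty$ and $\sup_n\|\nu_n\|<\infty$, and, more usefully, $\nu_n(\alpha_n)\to0$ and $\varphi_n(\alpha_n)\to\varphi(\alpha_0)$ whenever $\alpha_n\to\alpha_0$ (the latter because $\varphi$ is continuous, so its hulls coincide with $\varphi$). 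This already makes the \emph{uniform} part of the statement --- the presence of $\nu_n$ rather than $\nu_n\equiv0$ --- essentially automatic.

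Next I would make the two hulls explicit. Writing $f_\pm$ for the one-sided limits of $f$ at $q_{\alpha_0}$, continuity of $\varphi$ gives that the l.s.c.\ hull of $g$ at $\alpha_0$ equals $\min\{\varphi(\alpha_0)/f_-,\varphi(\alpha_0)/f_+\}$ and the u.s.c.\ hull equals $\max\{\varphi(\alpha_0)/f_-,\varphi(\alpha_0)/f_+\}$, provided the chosen version satisfies $f(q_{\alpha_0})\in[\min(f_-,f_+),\max(f_-,f_+)]$; after possibly modifying $f$ on its discontinuity set, which is Lebesgue-null and hence leaves $F$, the quantile function, and $g_n$ unchanged, we may and do assume this (e.g.\ $f$ right-continuous). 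At continuity points both hulls equal $g$ itself.

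The core is then the behaviour of $g_n(\alpha_n)$ along an arbitrary $\alpha_n\to\alpha_0$. Since $\nu_n(\alpha_n)+s\,t_n\varphi_n(\alpha_n)\to0$ uniformly in $s\in[0,1]$, the integration argument $\alpha_n+\nu_n(\alpha_n)+s\,t_n\varphi_n(\alpha_n)$ converges to $\alpha_0$ uniformly in $s$; because $f\circ q$ is bounded away from zero and has one-sided limits $f_\pm$ at $\alpha_0$, for every $\varepsilon>0$ and all large $n$ the integrand $1/f(q_\cdot)$ lies within $\varepsilon$ of the closed interval with endpoints $1/f_-,1/f_+$, whence $g_n(\alpha_n)$ lies within $\varepsilon|\varphi_n(\alpha_n)|$ of the interval with endpoints $\varphi_n(\alpha_n)/f_-,\varphi_n(\alpha_n)/f_+$. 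Letting $n\to\infty$ and then $\varepsilon\to0$, and using $\varphi_n(\alpha_n)\to\varphi(\alpha_0)$, yields $\liminf_n g_n(\alpha_n)\ge\min\{\varphi(\alpha_0)/f_-,\varphi(\alpha_0)/f_+\}$ and $\limsup_n g_n(\alpha_n)\le\max\{\varphi(\alpha_0)/f_-,\varphi(\alpha_0)/f_+\}$, i.e.\ the two ``along all sequences'' conditions. For the two recovery-sequence conditions, suppose the l.s.c.\ hull at $\alpha_0$ equals $\varphi(\alpha_0)/f_-$ (the other case, and the u.s.c.\ hull, being symmetric); a diagonal construction picks $\alpha_n\uparrow\alpha_0$ slowly --- say $\alpha_n=\alpha_0-1/k$ on a block of indices long enough that $|\nu_n(\alpha_0-1/k)|+t_n\|\varphi_n\|<1/k$ there, which is possible since $\nu_n(\beta)\to0$ for fixed $\beta$ and $t_n\to0$ --- so that $\alpha_n+\nu_n(\alpha_n)+s\,t_n\varphi_n(\alpha_n)<\alpha_0$ for all $s$ and large $n$; then the integrand converges to $1/f_-$ by dominated convergence and $g_n(\alpha_n)\to\varphi(\alpha_0)/f_-$, matching the hull. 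Together the four conditions give $\dif_{hypi}(g_n,g)\to0$.

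The main obstacle is precisely this sign bookkeeping: the perturbation $\nu_n(\alpha)+s\,t_n\varphi_n(\alpha)$ carries no controlled sign, so whether $g_n(\alpha_n)$ ``feels'' $f_-$ or $f_+$ depends on an uncontrolled interaction between that sign and the sign of $\varphi_n(\alpha_n)$, and it is exactly this interaction that produces the $\min$ and $\max$ in the hulls --- hence the failure of supremum-norm (or Skorokhod) differentiability. For a clean write-up I would package the argument using the auxiliary results on $\dif_{hypi}$-convergence of products and quotients and the extended continuous mapping framework of \citet{buecher2014} --- writing $g_n=\varphi_n\cdot r_n$ with $r_n(\alpha)=\int_0^1 1/f(q_{\alpha+\nu_n(\alpha)+s\,t_n\varphi_n(\alpha)})\,\dif s$ and treating the continuous factor $\varphi_n$ separately --- and a small amount of extra care is needed near $\alpha_l$ and $\alpha_u$, where the perturbed argument may leave $[\alpha_l,\alpha_u]$ and one works on a slightly enlarged interval on which the hypotheses still hold.
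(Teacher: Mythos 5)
Your proposal is correct and follows essentially the same route as the paper: the explicit integral representation $t_n^{-1}$-increment $=\varphi_n(\alpha)\int_0^1[f\{F^{\Inv}(\alpha+\nu_n(\alpha)+s\,t_n\varphi_n(\alpha))\}]^{-1}\dif s$, factoring out the continuous limit $\varphi$ via the product rules for $\dif_{hypi}$, Fatou/sandwich bounds along arbitrary sequences $\alpha_n\to\alpha_0$, and recovery sequences forced to one side of $\alpha_0$ by shifting past $\|\nu_n\|+t_n\|\varphi_n\|$, together with a convention fixing the version of $f$ at its discontinuity points. The only differences are cosmetic (FTC for Lipschitz $F^{\Inv}$ instead of the bounded-variation theorem, an $\varepsilon$-interval argument instead of the explicit $\liminf/\limsup$ lemmas), and the endpoint issue you flag is glossed over in the paper as well.
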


Using the convergence of $\sqrt{n}(G_n^{\Inv}-G^{\Inv})$, Lemma~\ref{lem:uniform_semi_hadam} and the functional delta-method, Theorem ~B.7, \citet{buecher2014}, we conclude \begin{equation*}
	\sqrt{n}\big(\hat{q}_{\cdot,n}-q_{\cdot}\big) = \sqrt{n}\Big[F^{\Inv}\big\{G_n^{\Inv}(\cdot)\big\}-F^{\Inv}\big\{G^{\Inv}(\cdot)\big\}\Big]\to \frac{V}{f(q_{\cdot})}
\end{equation*}
in $(L^{\infty}[\alpha_l,\alpha_u],\dif_{hypi})$. 
\end{proof}

\begin{proof}[Proof of Theorem~\ref{thm:main_boot_quantile}.]
	
	Given a sample $U_1, \ldots , U_n$ of independent $\mathcal{U}[0,1]$ distributed random variables and the corresponding empirical distribution function $G_n$, let $U_1^*, \ldots, U_n^*$ be a bootstrap sample with empirical distribution function $G_n^*$. Using the quantile transformation we obtain \begin{equation*}
		\sqrt{n}\big( q_{\cdot,n}^*-\hat{q}_{\cdot,n}\big) = \sqrt{n}\Big[F^{\Inv}\big\{(G_n^*)^{\Inv}(\cdot)\big\}-F^{\Inv}\big\{G_n^{\Inv}(\cdot)\big\}\Big].
	\end{equation*}
	By Theorem~3.6.2, \citet{vdvW2013}, the process $\sqrt{n}\big\{(G_n^*)^{\Inv}(\cdot)-G_n^{\Inv}(\cdot)\big\} $ converges in distribution to $V$, conditionally, almost surely, with $V$ as in Theorem~\ref{thm:conv_quantile}. 
	Now we use Theorem~3.9.13, \citet{vdvW2013}, extended to semi-metric spaces. %(the measurability issues are overcome as in the proof of Theorem~B.3, \citet{buecher2014}). 
	To this end we use Lemma~\ref{lem:uniform_semi_hadam}, which covers (3.9.12) in \citet{vdvW2013}. Moreover we require that $\Vert G_n^{\Inv}(\cdot)-G^{\Inv}(\cdot)\Vert \to 0$ holds almost surely, which is valid by the classical Glivenko-Cantelli-Theorem, Theorem~19.4, \citet{vdv2000}. Finally, condition (3.9.9) in \citet{vdvW2013} is valid by their Theorem~3.6.2. Thus we can indeed apply Theorem~3.9.13, \citet{vdvW2013}, which concludes the proof. 
\end{proof}

\section*{Supplementary material}

The supplementary material contains simulation results, a summary of convergence under the hypi-semimetric and technical details for the proofs. 

\section*{Acknowledgements}

Tobias Zwingmann acknowledges financial support from of the Cusanuswerk for providing a dissertation scholarship.

\bibliographystyle{chicago}
\nocite{*}
\bibliography{expectile}

\section{Supplement: Simulation Study}

In this section we illustrate the asymptotic results for the expectile process in a short simulation. 
Let $Y$ be a random variable with distribution function 
\begin{equation*}
	F(x)=\frac{9}{10}\,\int_{-\infty}^{x}\frac{1}{4\,\sqrt{2\,\pi}}\,\exp\Big(-\frac{y^2}{32}\Big)\,\dif y + \frac{1}{10}\,\ind{x\geq 1},
\end{equation*}
which is a mixture of a $\mathcal{N}(0,16)$ random variable and point-mass in $1$, so that $\E{Y}=\nicefrac{1}{10}$ and $\E{Y^2}=14.5$. We will concentrate on the weak convergence of the sup-norm of the empirical expectile process. Using equation (2.7) in \citet{Newey}, we numerically find $\mu_{\tau_0}=1$ for $\tau_0 \approx 0.6529449$, and investigate the expectile process on the interval $[0.6, 0.7]$. 

Figure~\ref{fig:paths_expectile} contains four paths of the expectile process $\sqrt{n}\big(\hat \mu_{\tau, n}-\mu_{\tau}\big)$ for samples of size $n=10^4$. 
All plotted paths seem to evolve a jump around $\tau_0$.

%The theoretical expectiles $\mu_{\tau}$ are approximated with the function \verb|uniroot(ident.fct)|, where \verb|ident.fct| is the integrated identification function $I_{\tau}(x,F)$, which we calculate with aid of the function \verb|integrate|. 

%We generate a sample $Y_1,\ldots , Y_n$ of size $n$ with help of a uniform sample using \verb|runif(n)| and the quantile function $F^{\Inv}$ given by \begin{equation*}
%	F^{\Inv}(y) = \begin{cases}
%	\Phi_{0,16}^{\Inv}\Big(\frac{10}{9}\,y\Big)&\mbox{ if }\quad y<\frac{9}{10}\Phi_{0,16}(1),\\
%	1 &\mbox{ if }\quad \frac{9}{10}\Phi_{0,16}(1) \leq y <\frac{9}{10}\Phi_{0,16}(1)+\frac{1}{10},\\
%	\Phi_{0,16}^{\Inv}\Big(\frac{10}{9}\,\big\{y-\nicefrac{1}{10}\big\}\Big) &\mbox{ else}.
%	\end{cases}
%\end{equation*}

%Here $\Phi_{0,16}(x)$ denotes the cumulative distribution function of a $\mathcal{N}(0,16)$ random variable, implemented in \verb|R| as \verb|pnorm(x, mean=0, sd=4)|. The quantile function $\Phi_{0,16}^{\Inv}(y)$ of such a variable is given in \verb|R| as \verb|qnorm(y, mean=0, sd=4)|. For the 'ordinary' empirical version we use $M$ of such samples for further computations. For the bootstrap we fix a sample and therefrom draw $M$ bootstrap-samples of size $n$ with replacement, using \verb|sample(replace=T)|. 

%With the 'ordinary' samples we compute the empirical identification functions $I_{\tau}(x,F_n)$; the bootstrap samples are used to construct $I_{\tau}(x,F_n^*)$. With these the empirical- and bootstrap expectile-curves are approximated using \verb|uniroot|. 

\begin{figure}
	\centering
	\begin{subfigure}{.45\textwidth}
		\centering
		\includegraphics[width=\linewidth]{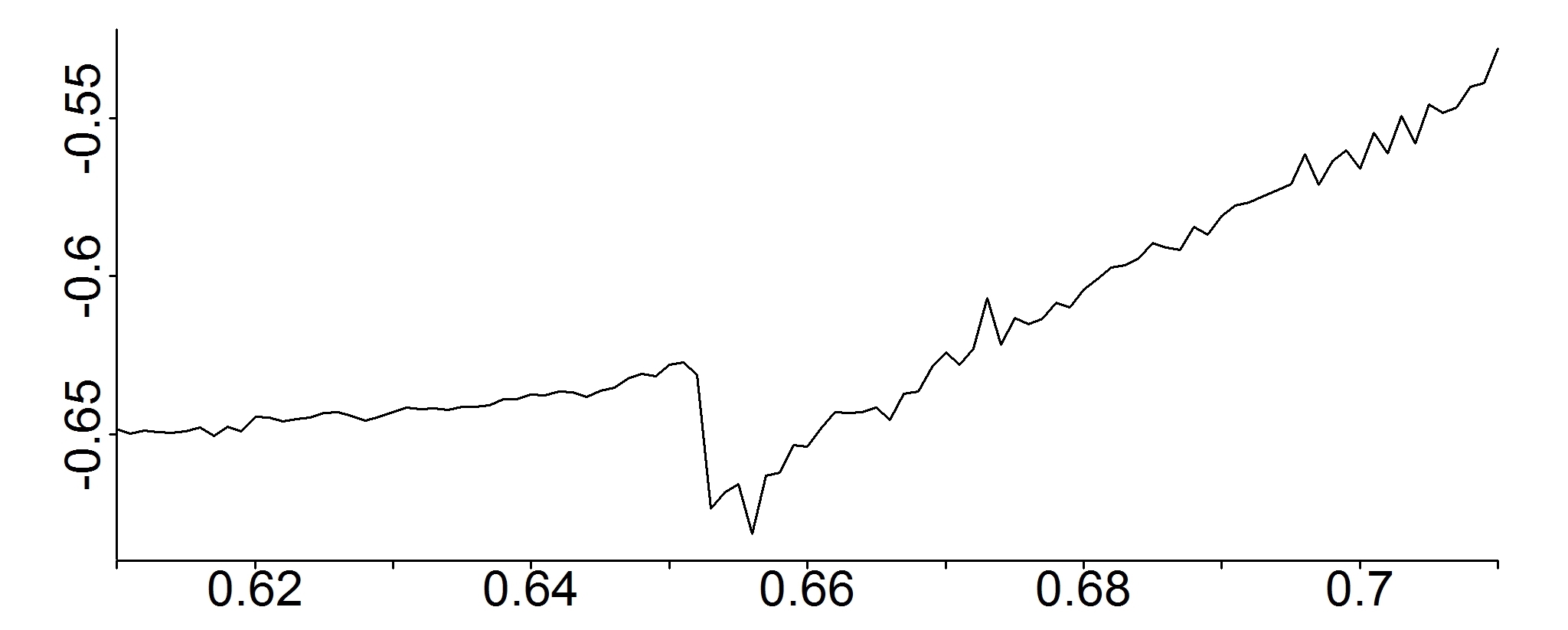}
		\captionsetup{font={small}, labelfont={bf}, width=.95\linewidth}
		\caption{Path evolving a downward jump.}
		\label{subfig:path_down_i}
	\end{subfigure}%
	\begin{subfigure}{.45\textwidth}
		\centering
		\includegraphics[width=\linewidth]{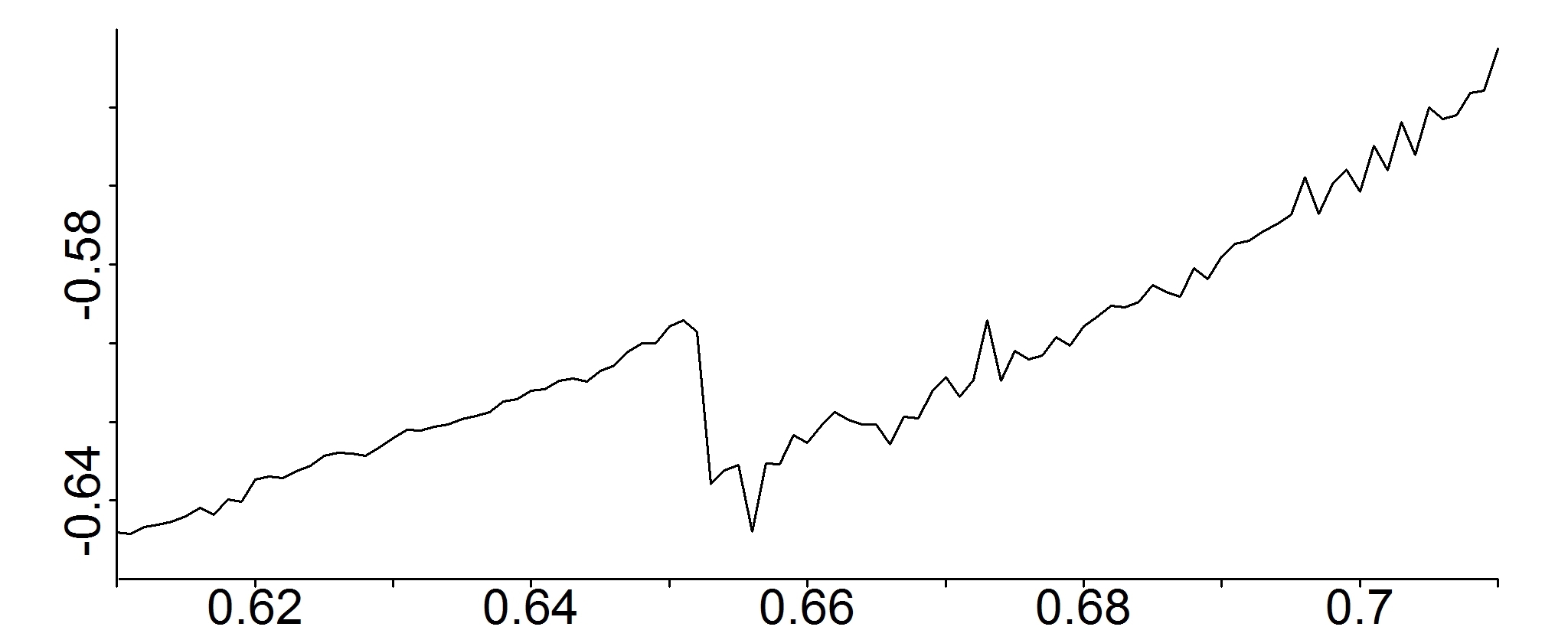}
		\captionsetup{font={small}, labelfont={bf}, width=.95\linewidth}
		\caption{Path evolving a downward jump.}
		\label{subfig:path_down_ii}
	\end{subfigure}\\
	\begin{subfigure}{.45\textwidth}
		\centering
		\includegraphics[width=\linewidth]{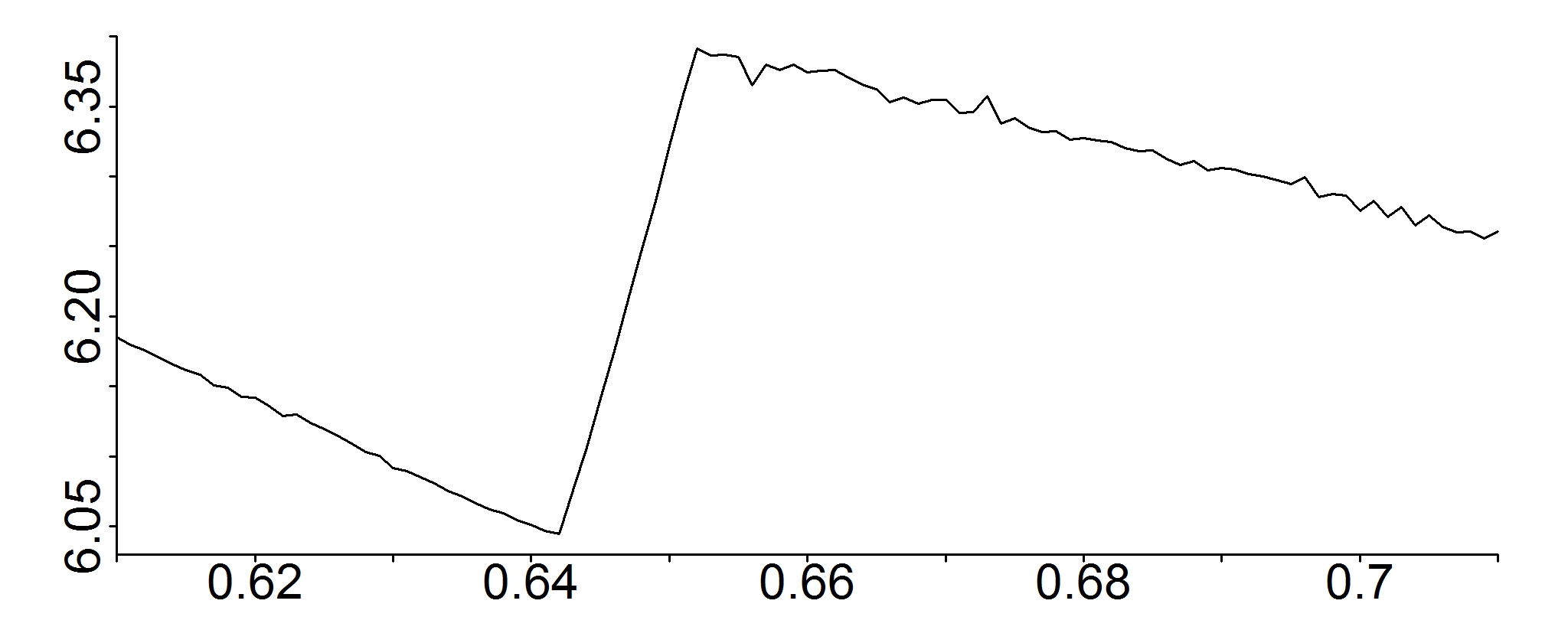}
		\captionsetup{font={small}, labelfont={bf}, width=.95\linewidth}
		\caption{Path evolving an upward jump.}
		\label{subfig:path_up_i}
	\end{subfigure}%
	\begin{subfigure}{.45\textwidth}
		\centering
		\includegraphics[width=\linewidth]{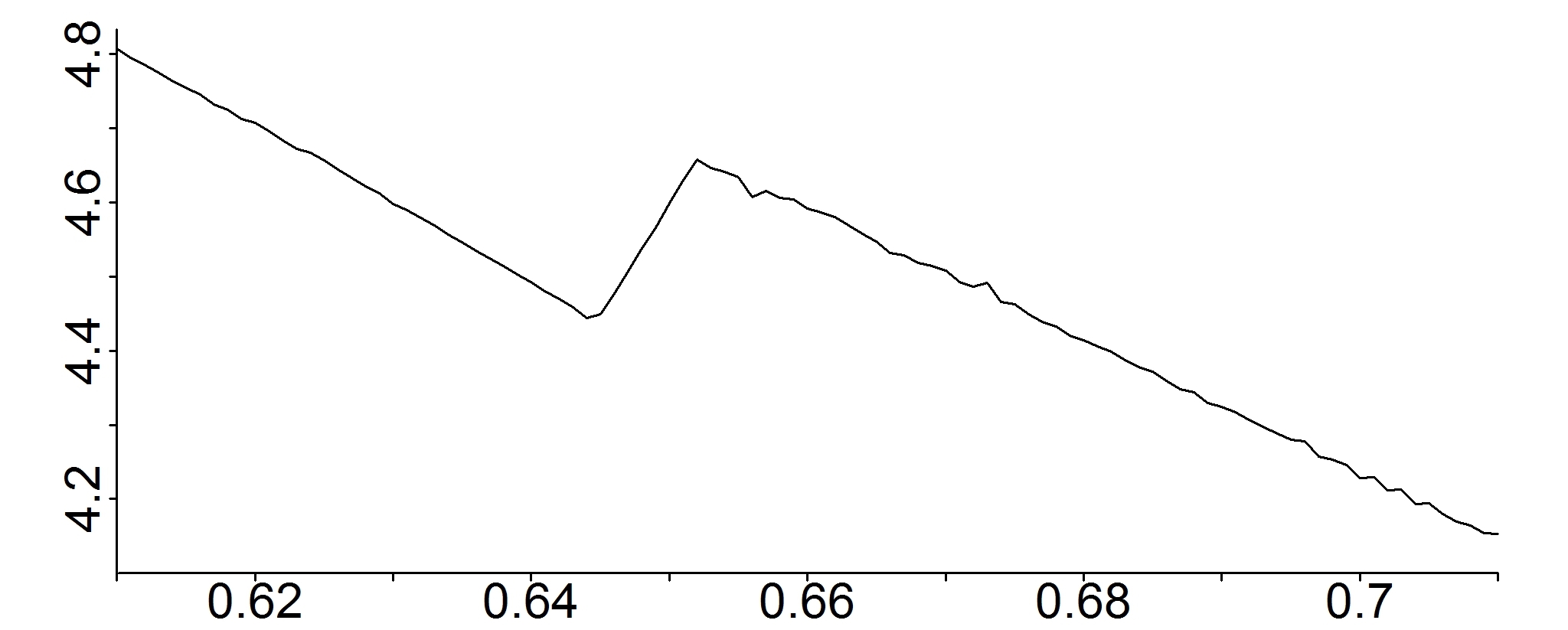}
		\captionsetup{font={small}, labelfont={bf}, width=.95\linewidth}
		\caption{Path evolving an upward jump.}
		\label{subfig:path_up_ii}
	\end{subfigure}
	\captionsetup{font={small}, labelfont={bf}, width=.95\linewidth}
	\caption[Exemplary paths of empirical expectile process.]{The pictures show simulated paths of the empirical expectile process based on $n=10^4$ observations of $Y$. If the path is negative (positive) around $\tau_0$, a downward- (upward-) jump seems to evolve. This is plausible when considering the form of the hulls of $\dot{\psi}^{\Inv}$ in the limit process.}
	\label{fig:paths_expectile}
\end{figure}

Now we investigate the distribution of the supremum norm of the expectile process on the interval $[0.6, 0.7]$. To this end, we simulate $M=10^4$ samples of sizes $n \in \{10, 10^2,10^4\}$, compute the expectile process and its supremum norm. Plots of the resulting empirical distribution function and density estimate of this statistic are contained in 
Figure~\ref{fig:emp_10_to_10000}. The distribution of the supremum distance seems to converge quickly. 

Finally, to illustrate performance of the bootstrap, Figure \ref{fig:boot_10_to_1000} displays the distribution of $M=10^4$ bootstrap samples of $\Vert\sqrt{n}\big(\mu_{\cdot, n}^*-\hat \mu_{\cdot,n}\big)\Vert$ based on a single sample of size $n \in \{10^2, 10^3, 10^4\}$ from the $n$ out of $n$ bootstrap, together with the distribution of  $\Vert\sqrt{n}\big(\hat \mu_{\cdot, n}-\mu_{\cdot}\big)\Vert$. The bootstrap distribution for $n=10^4$ is quite close to the empirical distribution.  

% We do not intend to show a simulated limit, as it is hard to simulate (as it comes from a gaussian process with rather complex covariance structure -- calculating the covariance structure over a fine grid is computationally expensive and inaccurate, leading to a singular covariance-matrix). 

\begin{figure}[htb]
	\centering
	\begin{subfigure}{.5\textwidth}
		\centering
		\includegraphics[width=\linewidth]{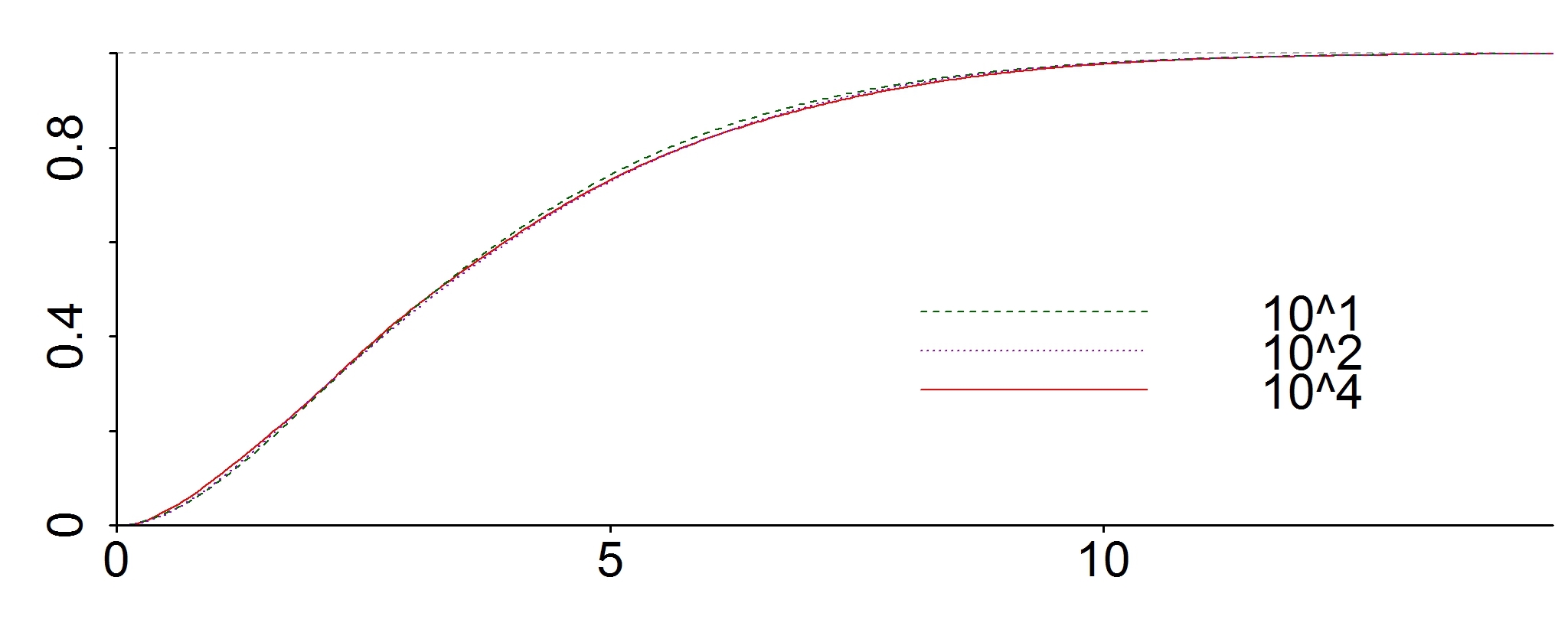}
		\captionsetup{font={small}, labelfont={bf}, width=.9\linewidth}
		\caption{Estimated cumulative distribution function.}
		\label{subfig:cdf_est_1}
	\end{subfigure}%
	\begin{subfigure}{.5\textwidth}
		\centering
		\includegraphics[width=\linewidth]{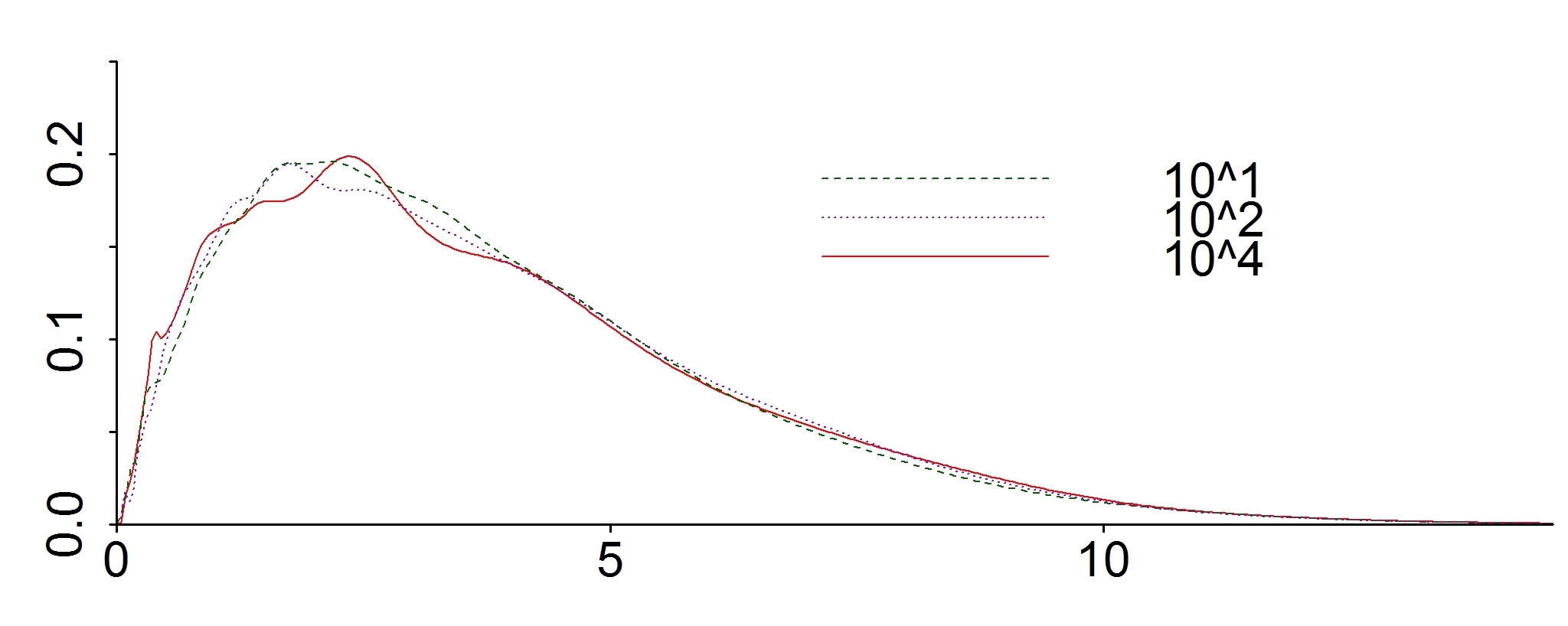}
		\captionsetup{font={small}, labelfont={bf}, width=.9\linewidth}
		\caption{Estimated density.}
		\label{subfig:dens_est_1}
	\end{subfigure}
	\captionsetup{font={small}, labelfont={bf}, width=.95\linewidth}
	\caption[Example: Estimated distributions of the supremum norm of $\sqrt{n}\big(\hat \mu_{\tau, n}-\mu_{\tau}\big)$. ]{Figure (a) shows the cumulative distribution function of the supremum norm of $\sqrt{n}\big(\hat \mu_{\tau, n}-\mu_{\tau}\big)$, based on $M=10^4$ samples of sizes $n \in \{10, 10^2, 10^4\}$, Figure (b) the corresponding density estimate. }
	\label{fig:emp_10_to_10000}
\end{figure}

%Figure~ illustrates the result for the bootstrap. Again we used \verb|kcde(positive=T)| and \verb|kde(positive=T)| from the package \verb|ks| to estimate the cumulative distribution and density function of $\Vert\sqrt{n}\big(\mu_{\cdot, n}^*-\mu_{\cdot,n}\big)\Vert$ after $M=10^4$ repetitions. In Figure~\ref{fig:boot_10_to_1000} we indicate the 'ordinary' empirical version for $n=10^4$ from above in red, using it as a benchmark. The estimated bootstrap distribution functions for $n\in\{10^3, 10^4\}$ do not differ much from the benchmark. The estimated bootstrap densities vary more but get closer to the benchmark at least for $n= 10^4$ (maximal absolute difference is about $0.0291$). 

\begin{figure}[htb]
	\centering
	\begin{subfigure}{.5\textwidth}
		\centering
		\includegraphics[width=\linewidth]{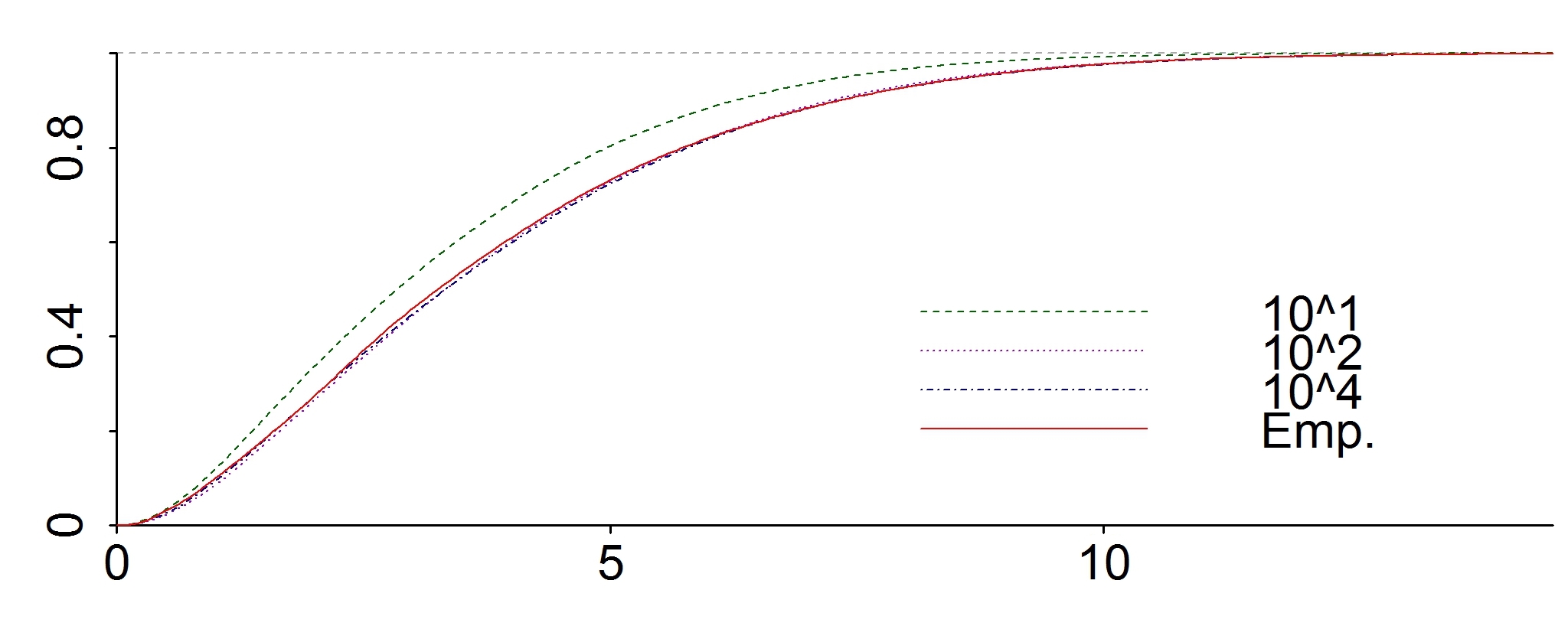}
		\captionsetup{font={small}, labelfont={bf}, width=\linewidth}
		\caption{Estimated bootstrap cumulative distribution function.}
		\label{subfig:boot_cdf_est_1}
	\end{subfigure}%
	\begin{subfigure}{.5\textwidth}
		\centering
		\includegraphics[width=\linewidth]{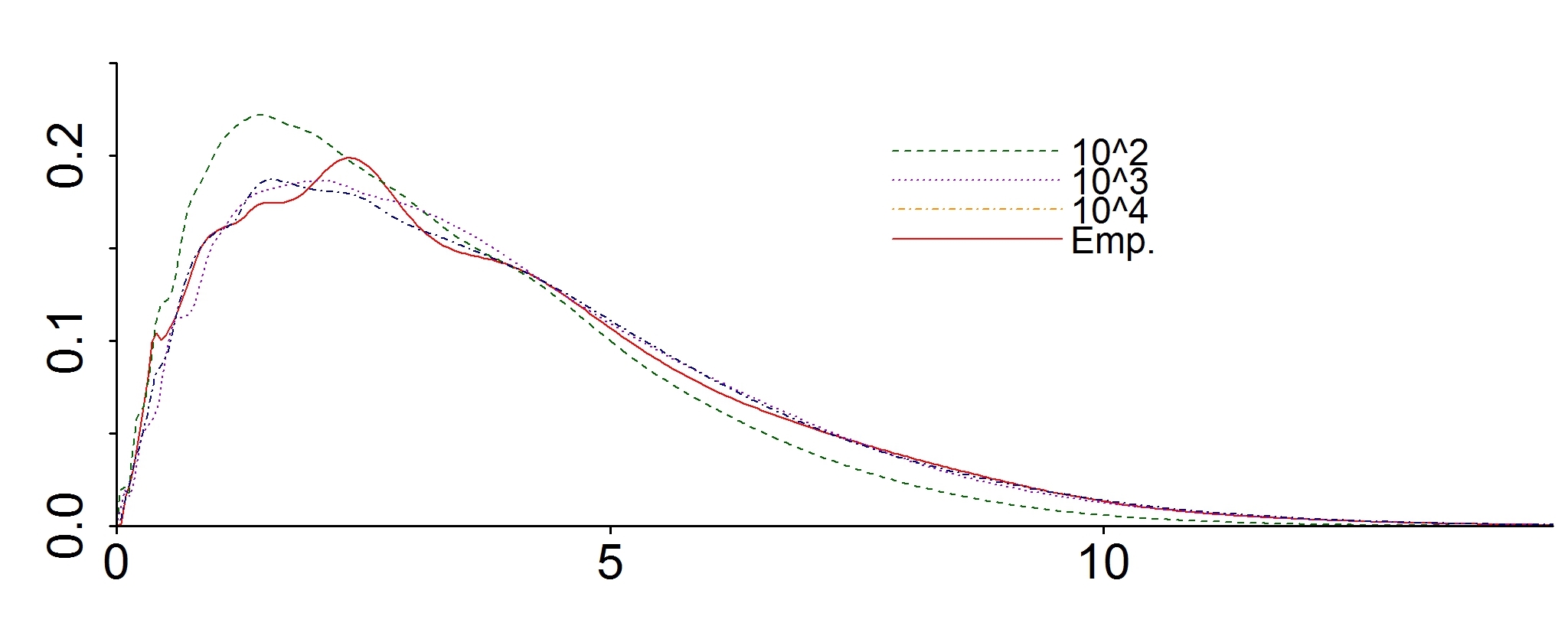}
		\captionsetup{font={small}, labelfont={bf}, width=.9\linewidth}
		\caption{Estimated bootstrap density.}
		\label{subfig:boot_dens_est_1}
	\end{subfigure}
	\captionsetup{font={small}, labelfont={bf}, width=.95\linewidth}
	\caption[Example: Estimated distributions for $\Vert\sqrt{n}\big(\mu_{\cdot, n}^*-\mu_{\cdot,n}\big)\Vert$.]{Figure (a) shows the estimated cumulative bootstrap distribution function of $\Vert\sqrt{n}\big(\hat \mu_{\cdot, n}^*-\mu_{\cdot,n}\big)\Vert$, figure (b) the estimated density thereof, obtained from $M=10^4$ estimates of this statistic. The red line indicates the estimated empirical distribution and density function, respectively, taken from $\Vert\sqrt{n}\big(\mu_{\cdot, n}-\mu_{\cdot}\big)\Vert$ for $n=10^4$. }
	\label{fig:boot_10_to_1000}
\end{figure}

%%%%%%%%%%%%%%%%%%%%%%%%%%%%%%%%%%%%%%%%%%%%%%%%%%%%%%%%%%%%%%%%%%%%%%%%%

\section{Supplement: weak-convergence under the hypi semimetric}
 
Let us briefly discuss the concept of hypi-convergence as introduced by \citet{buecher2014}.  
Let $(\mathbb{T},\dif )$ be a compact, separable metric space, and denote by $\ell^{\infty}(\mathbb{T})$ the space of all bounded functions $h: \mathbb{T}\to \mathbb{R}$. The lower- and upper-semicontinuous hulls of $h \in \ell^{\infty}(\mathbb{T})$ are defined by
\begin{align}\label{eq:lowerupperhulls}
\begin{split}
	h_{\wedge}(t) &= \lim_{\varepsilon \searrow 0}\,\inf\big\{h(t')\,\mid\, \dif(t,t')<\varepsilon \big\}, ~~ 
	h_{\vee}(t) = \lim_{\varepsilon \nearrow 0}\,\sup \big\{h(t')\,\mid\, \dif(t,t')<\varepsilon \big\}
\end{split}
\end{align}
and satisfy $h_{\wedge},h_{\vee}\in\ell^{\infty}(\mathbb{T})$ as well as $h_{\wedge}\leq h\leq h_{\vee}$. 
A sequence $h_n\in\ell^{\infty}(\mathbb{T})$ hypi-converges to a limit $h\in\ell^{\infty}(\mathbb{T})$, if it epi-converges to $h_\wedge$, that is, 
\begin{align}\label{eq:epi_pointwise}
\begin{split}
		\text{for all } t, t_n \in\mathbb{T} \text{ with }t_n\to t : \quad & h_{\wedge}(t)\leq \liminf_{n\to\infty} h_n(t_n)\\
		\text{for all } t \in\mathbb{T} \text{ there exist }t_n \in \mathbb{T}, \ t_n\to t : \quad & h_{\wedge}(t) =  \lim_{n\to\infty} h_n(t_n), \hfill
\end{split}
\end{align}
and if it hypo-converges to $h_\vee$, that is, 
\begin{align}\label{eq:hypo_pointwise}
\begin{split}
		\text{for all } t, t_n \in\mathbb{T} \text{ with }t_n\to t : \quad & \limsup_{n\to\infty} h_n(t_n)\leq h_{\vee}(t)\\
		\text{for all } t \in\mathbb{T} \text{ there exist }t_n \in \mathbb{T}, \ t_n\to t : \quad & \lim_{n\to\infty} h_n(t_n)= h_{\vee}(t).\hfill
\end{split}
\end{align}

The limit function $h$ is only determined in terms of its lower - and upper semicontinuous hulls. Indeed, there is a semimetric, denoted by $\dif_{hypi}$, so that the convergence in (\ref{eq:lowerupperhulls}) and (\ref{eq:hypo_pointwise}) is equivalent to $\dif_{hypi}(h_n,h) \to 0$, see \citet{buecher2014} for further details. 
To transfer the concept of weak convergence from metric to semimetric space, \citet{buecher2014} consider the space $L^{\infty}(\mathbb{T})$ of equivalence classes $[h]=\{g\in\ell^{\infty}(\mathbb{T})\,\mid\, \dif_{hypi}(h,g)=0\}$. The convergence of a sequence of random elements $(Y_n)$ in $g\in\ell^{\infty}(\mathbb{T})$ to a Borel-measurable $Y$ is defined in terms of weak convergence of $([Y_n])$ to $[Y]$ in the metric space $(L^{\infty}(\mathbb{T}), \dif_{hypi})$ in the sense of Hofmann-J\o rgensen, see \citet{vdvW2013}) and \citet{buecher2014}.

%%%%%%%%%%%%%%%%%%%%%%%%%%%%%%%%%%%%%%%%%%%%%%%%%%%%%%%%%%%%%%

\section{Supplement: Technical Details}

We write $\En {h(Y)} = \nicefrac{1}{n} \sum_{k=1}^n h(Y_k)$, and use the abbreviation 
\begin{align*}
	\Vert \sqrt{n}\big(F_n-F)\Vert_{\mathcal{G}} = \sup_{h\in\mathcal{G}}\big|\sqrt{n}\big[\En{h(Y)}-\E{h(Y)}\big]\big|
\end{align*}
for a class of measurable functions $\mathcal{G}$. 

Recall from \citet{holzmann2016} the identity
\begin{equation}\label{eq:rep_I}
	I_{\tau}(x,F)=\tau\,\int_{x}^{\infty}\big(1-F(y)\big)\,\dif y-(1-\tau)\,\int_{-\infty}^{x}F(y)\,\dif y.
\end{equation}

\subsection{Details for the Proof of Theorem \ref{thm:main_thm}}
We start with some technical preliminaries. 
\begin{lemma}\label{lem:rep_dif_I}
We have that for $x_1,x_2\in\R$,
\begin{equation}\label{eq:rep_dif_I}
	I_{\tau}(x_1,F) -I_{\tau}(x_2,F) = (x_2-x_1)\,\big[ \tau + \big(1-2\,\tau\big)\,\int_{0}^{1}F\big(x_2+s(x_1-x_2)\big)\,\dif s\big].
	\end{equation} 
\end{lemma}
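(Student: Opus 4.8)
The plan is to obtain \eqref{eq:rep_dif_I} as a direct reorganization of the representation \eqref{eq:rep_I}. First I would apply the representation \eqref{eq:rep_I} at $x_1$ and at $x_2$ and subtract. The two infinite tails collapse into finite oriented integrals, namely
\[
\int_{x_1}^{\infty}\big(1-F(y)\big)\,\dif y - \int_{x_2}^{\infty}\big(1-F(y)\big)\,\dif y = -\int_{x_2}^{x_1}\big(1-F(y)\big)\,\dif y,
\]
\[
\int_{-\infty}^{x_1}F(y)\,\dif y - \int_{-\infty}^{x_2}F(y)\,\dif y = \int_{x_2}^{x_1}F(y)\,\dif y,
\]
so that, combining the two resulting terms under a single integral,
\[
I_{\tau}(x_1,F) - I_{\tau}(x_2,F) = -\int_{x_2}^{x_1}\Big[\tau\,\big(1-F(y)\big) + (1-\tau)\,F(y)\Big]\,\dif y = -\int_{x_2}^{x_1}\big[\tau + (1-2\tau)F(y)\big]\,\dif y,
\]
using the elementary identity $\tau(1-F(y)) + (1-\tau)F(y) = \tau + (1-2\tau)F(y)$.

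In the second step I would apply the affine substitution $y = x_2 + s(x_1-x_2)$, $s \in [0,1]$, with $\dif y = (x_1-x_2)\,\dif s$. This converts $-\int_{x_2}^{x_1}$ into the factor $(x_2-x_1)$ times $\int_0^1$, and since $\int_0^1 \tau\,\dif s = \tau$ it yields exactly
\[
I_{\tau}(x_1,F) - I_{\tau}(x_2,F) = (x_2-x_1)\Big[\tau + (1-2\tau)\int_0^1 F\big(x_2 + s(x_1-x_2)\big)\,\dif s\Big],
\]
which is \eqref{eq:rep_dif_I}; the degenerate case $x_1 = x_2$ is immediate since both sides vanish.

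I do not expect a genuine obstacle here: the statement is essentially a one-line computation from \eqref{eq:rep_I}. The only points requiring a word of care are the orientation (sign) bookkeeping of the integrals --- one may equivalently split into the cases $x_1 > x_2$ and $x_1 < x_2$ --- and the observation that all integrals involved are finite because $F$ has a finite first moment under the standing assumption $\E{Y^2} < \infty$. As an alternative to the integral manipulation, one could note that both sides of \eqref{eq:rep_dif_I}, regarded as functions of $x_1$ for fixed $x_2$, vanish at $x_1 = x_2$ and, by \eqref{eq:rep_I}, have the common almost-everywhere derivative $-\big(\tau + (1-2\tau)F(x_1)\big)$; the fundamental theorem of calculus then gives the claim. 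The direct computation above is the cleaner route.
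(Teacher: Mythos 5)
Your proof is correct, but it takes a different route from the paper. The paper defines $g(s)=I_{\tau}(x_2+s(x_1-x_2),F)$, observes that $g$ is continuous and monotone, hence of bounded variation, and then invokes a fundamental-theorem result for BV functions (Theorem~7.23 in \citet{bruckner2008}) together with the one-sided derivatives \eqref{eq:leftreightder} cited from \citet{holzmann2016} to see that the exceptional set $\{g'=\pm\infty\}$ is empty and that $g(1)-g(0)=\int_0^1 g'(s)\,\dif s$ gives \eqref{eq:rep_dif_I}. You instead subtract the representation \eqref{eq:rep_I} at $x_1$ and $x_2$, collapse the tail integrals into the single oriented integral $-\int_{x_2}^{x_1}\big[\tau+(1-2\tau)F(y)\big]\dif y$ (finite since $\E{|Y|}<\infty$), and finish with the affine substitution $y=x_2+s(x_1-x_2)$; this is a clean, elementary computation that avoids the Lebesgue--Stieltjes/BV machinery and the appeal to the derivative formulas altogether, and your sign bookkeeping and the identity $\tau(1-F)+(1-\tau)F=\tau+(1-2\tau)F$ are right. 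What the paper's heavier route buys is structural parallelism: essentially the same Theorem~7.23 argument (monotone $g$ along the segment, empty set of infinite derivatives) reappears in the proof of Lemma~\ref{lem:uniform_semi_hadam} for the quantile process, where no analogue of \eqref{eq:rep_I} is available; your closing remark about matching almost-everywhere derivatives is in fact close in spirit to the paper's argument, though as stated it would still need a justification that the a.e.\ derivative integrates back to the increment (absolute continuity or the BV result), which your direct computation sidesteps.
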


\begin{proof}
	Define $g(s)=I_{\tau}(x_2+s(x_1-x_2),F)$, $s \in [0,1]$, so that $I_{\tau}(x_1,F) -I_{\tau}(x_2,F) = g(1)-g(0)$. The map $g$ is continuous, in addition it is decreasing, if $x_1\geq x_2$, and increasing otherwise. Hence it is of bounded variation and Theorem~7.23, \citet{bruckner2008}, yields \begin{equation}\label{eq:integral_g_error}
		g(1)-g(0)= \int_{0}^{1} g'(s)\,\dif s + \mu_g(\{s\in [0,1] \mid g'(s)=\pm\infty\}),
	\end{equation}
	where $\mu_g$ is the Lebesgue-Stieltjes signed measure associated with $g$. 
%	
%	THE DISCUSSION LEADING TO THIS CAN BE FOUND ON http://classicalrealanalysis.info/com/documents/BBT-AlllChapters-Landscape.pdf, PAGE 478-489. 
%	
	From \citet{holzmann2016}, the right- and left-sided derivatives of $I_{\tau}(x,F)$ are given by 
	\begin{equation}\label{eq:leftreightder}
		\frac{\partial^+}{\partial x} I_{\tau}(x,F)\big) = -\big(\tau+ (1-2\,\tau)F(x)\big), \qquad 
		\frac{\partial^-}{\partial x} I_{\tau}(x,F) =-\big(\tau + (1-2\,\tau)F(x-)\big).
	\end{equation}
	Both derivatives are bounded, such that $\{s\in [0,1] \mid g'(s)=\pm\infty\}=\emptyset$ in \eqref{eq:integral_g_error}, and we obtain 
	\begin{align*}
		&I_{\tau}(x_1,F) -I_{\tau}(x_2,F)  = \int_0^1 g'(s)\,\dif s\\
		= & \, (x_1-x_2)\, \int_{0}^{1}-\,\big[\tau + \big(1-2\,\tau\big)\,F\big(x_2+s(x_1-x_2)\big)\big]\dif s.
	\end{align*}
\end{proof}
\begin{lemma}\label{lem:c_bound}
We have
\begin{equation}\label{eq:estbasic}
\min\big\{\tau_l,1-\tau_u\big\} \leq \tau + (1-2\tau)s \leq 3/2,\quad \tau \in [\tau_l, \tau_u],\ s \in [0,1].
\end{equation}
\end{lemma}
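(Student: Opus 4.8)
The plan is to exploit the elementary fact that, for each fixed $\tau$, the map $s \mapsto \tau + (1-2\tau)s$ is affine in $s$, so on the compact interval $[0,1]$ it takes all its values between those attained at the two endpoints. First I would record these endpoint values: at $s=0$ one gets $\tau$, and at $s=1$ one gets $\tau + (1-2\tau) = 1-\tau$. Consequently, for every $\tau \in [\tau_l,\tau_u]$ and every $s \in [0,1]$,
\[
\min\{\tau,\,1-\tau\} \;\le\; \tau + (1-2\tau)s \;\le\; \max\{\tau,\,1-\tau\}.
\]

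Next I would bound the two sides uniformly in $\tau$. For the left-hand side, $\tau \ge \tau_l$ and $1-\tau \ge 1-\tau_u$ give $\min\{\tau,1-\tau\} \ge \min\{\tau_l,\,1-\tau_u\}$, which is precisely the lower bound in \eqref{eq:estbasic}. For the right-hand side, from $0 < \tau_l \le \tau \le \tau_u < 1$ we have $\tau < 1$ and $1-\tau < 1$, hence $\max\{\tau,1-\tau\} < 1 \le 3/2$; here any constant $\ge 1$ would serve, and the particular value $3/2$ is kept only because it is convenient in the subsequent estimates. Combining the two displays yields \eqref{eq:estbasic}.

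There is no genuine obstacle in this lemma; the only points worth stating with care are the endpoint argument for affine functions and the observation that in the intended applications of this bound — in particular when it is combined with Lemma \ref{lem:rep_dif_I} — the variable $s$ plays the role of a quantity of the form $\int_0^1 F(\cdots)\,\dif s \in [0,1]$, which is why the estimate is formulated for all $s \in [0,1]$ rather than merely at $s \in \{0,1\}$.
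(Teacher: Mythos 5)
Your proof is correct and takes essentially the same route as the paper: the paper's case analysis on whether $\tau$ is below, equal to, or above $\nicefrac{1}{2}$ is just an explicit version of your observation that the affine map $s\mapsto \tau+(1-2\tau)s$ interpolates between its endpoint values $\tau$ and $1-\tau$, after which both arguments bound $\min\{\tau,1-\tau\}$ below by $\min\{\tau_l,1-\tau_u\}$ and the maximum above by a constant at most $3/2$. Nothing is missing.
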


\begin{proof}
For the lower bound, 
\begin{equation*}
	\tau + (1-2\,\tau)s  \left\{\begin{matrix}
	= \nicefrac{1}{2},\hfill &&\hspace{-7pt}\mbox{if } \tau=\nicefrac{1}{2}\\
	\geq \tau , \hfill &&\hspace{-7pt}\mbox{if } \tau<\nicefrac{1}{2}\\
	\geq 1-\tau, &&\hspace{-7pt}\mbox{if } \tau >\nicefrac{1}{2}
	\end{matrix}\right\} \geq \min\big\{\nicefrac{1}{2},\tau_l,1-\tau_u\big\} = \min\big\{\tau_l,1-\tau_u\big\}.
	\end{equation*}
	The upper bound is proved similarly. 
\end{proof}
Next, we discuss Lipschitz-properties of relevant maps. 
\begin{lemma}\label{lem:lipschitz}
For any $x_1, x_2, y \in \R$ and $\tau \in [\tau_l, \tau_u]$, 
\begin{equation}\label{eq:lip1}
		\big|I_{\tau}(x_1,y)-I_{\tau}(x_2,y)\big|\leq \big|x_2-x_1\big| 
\end{equation}
Further, for any $\tau,\tau'\in[\tau_l,\tau_u]$ and $x,y\in \R$,
\begin{equation}\label{eq:lip2}
		\big|I_{\tau}(x,y)-I_{\tau'}(x,y)\big|\leq \big|\tau-\tau'\big|\,\big(|x|+|y|\big)
\end{equation}
Finally, the map $\tau\mapsto\mu_{\tau}$, $\tau \in [\tau_l, \tau_u]$, is Lipschitz-continuous. 
\end{lemma}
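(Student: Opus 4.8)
The plan is to settle \eqref{eq:lip1} and \eqref{eq:lip2} by elementary piecewise–linearity, and then to obtain the Lipschitz continuity of $\tau\mapsto\mu_\tau$ by combining these two bounds with Lemmas~\ref{lem:rep_dif_I} and \ref{lem:c_bound}. For \eqref{eq:lip1}, fix $y$ and $\tau$ and observe that $x\mapsto I_\tau(x,y)$ is continuous and piecewise linear, equal to $\tau(y-x)$ for $x\le y$ and to $-(1-\tau)(x-y)$ for $x>y$, the two pieces agreeing at $x=y$; its slope is therefore $-\tau$ or $-(1-\tau)$, hence bounded in modulus by $\max\{\tau,1-\tau\}\le 1$ since $\tau\in(0,1)$, which is \eqref{eq:lip1}. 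For \eqref{eq:lip2}, fix $x$ and $y$; then $\tau\mapsto I_\tau(x,y)$ is affine with slope $\pm(y-x)$, so that $|I_\tau(x,y)-I_{\tau'}(x,y)|=|\tau-\tau'|\,|y-x|\le|\tau-\tau'|(|x|+|y|)$.

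For the Lipschitz continuity of $\tau\mapsto\mu_\tau$, I would start from the defining identities $I_\tau(\mu_\tau,F)=0=I_{\tau'}(\mu_{\tau'},F)$, valid for $\tau,\tau'\in[\tau_l,\tau_u]$, and decompose
\[
0=\big[I_\tau(\mu_\tau,F)-I_\tau(\mu_{\tau'},F)\big]+\big[I_\tau(\mu_{\tau'},F)-I_{\tau'}(\mu_{\tau'},F)\big].
\]
By Lemma~\ref{lem:rep_dif_I} the first bracket equals $(\mu_{\tau'}-\mu_\tau)\,c(\tau,\tau')$ with $c(\tau,\tau')=\tau+(1-2\tau)\int_0^1 F\big(\mu_{\tau'}+s(\mu_\tau-\mu_{\tau'})\big)\,\dif s$; since $\int_0^1 F(\cdot)\,\dif s\in[0,1]$, Lemma~\ref{lem:c_bound} gives $c(\tau,\tau')\ge c_l:=\min\{\tau_l,1-\tau_u\}>0$. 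The second bracket equals $\E{I_\tau(\mu_{\tau'},Y)-I_{\tau'}(\mu_{\tau'},Y)}$, whose modulus is at most $|\tau-\tau'|\big(|\mu_{\tau'}|+\E{|Y|}\big)$ by \eqref{eq:lip2} after taking expectations, and $\E{|Y|}<\infty$ under the standing assumption $\E{Y^2}<\infty$. Combining the two estimates yields $c_l\,|\mu_\tau-\mu_{\tau'}|\le|\tau-\tau'|\,\big(|\mu_{\tau'}|+\E{|Y|}\big)$.

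It then remains to replace the $\tau'$–dependent factor $|\mu_{\tau'}|$ by a constant that is uniform over $[\tau_l,\tau_u]$, and this is the only genuinely delicate point of the proof. I would use that $\tau\mapsto\mu_\tau$ is non-decreasing: by the representation \eqref{eq:rep_I} the map $\tau\mapsto I_\tau(x,F)$ is non-decreasing for every fixed $x$, while $x\mapsto I_\tau(x,F)$ is strictly decreasing by Lemmas~\ref{lem:rep_dif_I} and \ref{lem:c_bound}, so $I_{\tau_2}(\mu_{\tau_1},F)\ge I_{\tau_1}(\mu_{\tau_1},F)=0=I_{\tau_2}(\mu_{\tau_2},F)$ forces $\mu_{\tau_1}\le\mu_{\tau_2}$ whenever $\tau_1\le\tau_2$. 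Hence $|\mu_{\tau'}|\le M:=\max\{|\mu_{\tau_l}|,|\mu_{\tau_u}|\}$ for every $\tau'\in[\tau_l,\tau_u]$, and therefore $|\mu_\tau-\mu_{\tau'}|\le L\,|\tau-\tau'|$ with $L=(M+\E{|Y|})/c_l$, which is the asserted Lipschitz property. Everything apart from this uniform bound reduces to the two lemmas already established together with \eqref{eq:lip2}.
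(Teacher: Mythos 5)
Your proofs of \eqref{eq:lip1} and \eqref{eq:lip2} are correct and coincide with the paper's (the paper writes the same piecewise/affine computation in one line each). For the Lipschitz continuity of $\tau\mapsto\mu_\tau$ you take a genuinely different, more self-contained route: the paper verifies a strong-monotonicity condition $\big(I_{\tau}(x_1,F)-I_{\tau}(x_2,F)\big)(x_1-x_2)\leq -a\,(x_1-x_2)^2$ with $a=\min\{\tau_l,1-\tau_u\}$ and then invokes Corollary~1 of \citet{beyn2011} on a suitably chosen ball $B_R(\mu_\tau)$ to produce a point $\bar x$ with $I_\tau(\bar x,F)=I_\tau(\mu_{\tau'},F)$ and $|\mu_\tau-\bar x|\leq a^{-1}|I_\tau(\mu_{\tau'},F)|$, identifies $\bar x=\mu_{\tau'}$ by strict monotonicity, and finishes with \eqref{eq:lip2} exactly as you do. You instead obtain the same key inequality $a\,|\mu_\tau-\mu_{\tau'}|\leq |I_\tau(\mu_{\tau'},F)|$ directly from the increment representation of Lemma~\ref{lem:rep_dif_I} together with the lower bound of Lemma~\ref{lem:c_bound}, which removes the external reference and the bookkeeping about the radius $R$; both arguments end with the same constant $\big(|\mu_{\tau_l}|\vee|\mu_{\tau_u}|+\E{|Y|}\big)/a$. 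A further small merit of your version is that you explicitly prove the monotonicity of $\tau\mapsto\mu_\tau$ (via the representation \eqref{eq:rep_I} and strict decrease of $x\mapsto I_\tau(x,F)$) in order to bound $|\mu_{\tau'}|$ by $|\mu_{\tau_l}|\vee|\mu_{\tau_u}|$, a step the paper uses implicitly in its final estimate \eqref{eq:lip_const_mu}; also note that only $\E{|Y|}<\infty$ is needed here, so conditioning on $\E{Y^2}<\infty$ is harmless but not necessary.
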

\begin{proof}
To show (\ref{eq:lip1}), for $x_1 \leq x_2$, 
\begin{align*}
		\big|I_{\tau}(x_1,y)-I_{\tau}(x_2,y)\big|&=\big|\big(x_2-x_1\big)\,\big(\tau \ind{y>x_1}+(1-\tau)\ind{y\leq x_2}\big)\big|\\
		& \leq \big|x_2-x_1\big| . 
	\end{align*} 
As for (\ref{eq:lip2}), 
\begin{align*}
		\big|I_{\tau}(x,y)-I_{\tau'}(x,y)\big| &= \big|\tau-\tau'\big|\big|(y-x)1_{y\geq x}+(x-y)1_{y<x}\big|\\
		&\leq \big|\tau-\tau'\big|\,\big(|x|+|y|\big).
	\end{align*}
For the Lipschitz-continuity of $\mu_\cdot$, we use Corollary~1 of \citet{beyn2011} for the function $x\mapsto I_{\tau}(x,F)$, $x \in B_R(\mu_{\tau})$, for appropriately chosen $R>0$, where $B_R(\mu_{\tau})$ is the open ball around $\mu_{\tau}$ with radius $R$.  
We observe that 
 \begin{enumerate}
		\item $x\mapsto I_{\tau}(x,F)$ is continuous for any $\tau\in[\tau_l,\tau_u]$, which is immediate from (\ref{eq:lip1}), 
		\item $x\mapsto I_{\tau}(x,F)$ fulfils \begin{equation*}
		\big(I_{\tau}(x_1,F)-I_{\tau}(x_2,F)\big)\,\big(x_1-x_2\big) \leq -a\,(x_1-x_2)^2
		\end{equation*}
		with $a = \min\{\tau_l, 1-\tau_u\} > 0$. This is clear from (\ref{eq:rep_dif_I}) and (\ref{eq:estbasic}).  
	\end{enumerate}
	Let $\tau,\,\tau'\in[\tau_l,\tau_u]$, and set $z=I_{\tau}(\mu_{\tau'},F)$. Using (ii) above yields \begin{align*}
		\frac{1}{a}\,\big|z\big|= \frac{1}{a}\,\big|I_{\tau}(\mu_{\tau'},F)-I_{\tau}(\mu_{\tau},F)\big|\leq\big|\mu_{\tau}-\mu_{\tau'}\big|\leq \mu_{\tau_u}-\mu_{\tau_l}.
	\end{align*}
Choosing $R=\mu_{\tau_u}-\mu_{\tau_l}$ gives $[\mu_{\tau_l}, \mu_{\tau_u}]\subset B_R(\mu_{\tau})\subset [\mu_{\tau_l}-R, \mu_{\tau_u}+R]$, $\tau \in [\tau_l, \tau_u]$, since $\mu_{\tau}\in[\mu_{\tau_l}, \mu_{\tau_u}]$, and Corollary~1, \citet{beyn2011}, now gives a $\bar{x}\in B_R(\mu_{\tau})$ with $I_{\tau}(\bar{x},F)=z$ and \begin{equation*}
		\big|\mu_{\tau}-\bar{x}\big|\leq \frac{1}{a}\, \big|z\big|.
	\end{equation*}
	Since $x\mapsto I_{\tau}(x,F)$ is strictly decreasing and $[\mu_{\tau_l}, \mu_{\tau_u}]\subset B_R(\mu_{\tau})$ as well as $I_{\tau}(\bar{x},F)=z = I_{\tau}(\mu_{\tau'},F)$, we obtain $\bar{x}=\mu_{\tau'}$. We conclude that
	\begin{align}\label{eq:lip_const_mu}
		 \big|\mu_{\tau}-\mu_{\tau'}\big|\leq &\,\frac{1}{a}\, \big|I_{\tau}(\mu_{\tau'},F)\big| = \frac{1}{a}\, \big|\E{I_{\tau}(\mu_{\tau'},Y)- I_{\tau'}(\mu_{\tau'},Y)}\big|\notag\\
		\leq &\, \big|\tau-\tau'\big|\,\frac{|\mu_{\tau'}|+\E{|Y|}}{a}\leq \big|\tau-\tau'\big|\,\frac{|\mu_{\tau_u}|\vee |\mu_{\tau_l}|+\E{|Y|}}{a},
	\end{align}
where we used (\ref{eq:lip2}). 
\end{proof}

{\sl Details for Step 1.}

\begin{proof}[Proof of Lemma \ref{lem:expecexapnsion}.]
	
{\sl Proof of (\ref{convergenceprocess}).}
		
By Lemma \ref{lem:lipschitz} the function class \begin{equation*}
		\cF=\big\{y\mapsto -I_{\tau}(\mu_{\tau},y) \,\mid\, \tau\in[\tau_l,\tau_u] \big\}
	\end{equation*}
is Lipschitz-continuous in the parameter $\tau$ for given $y$, and the Lipschitz constant (which depends on $y$) is square-integrable under $F$. Indeed, the triangle inequality first gives \begin{equation*}
	\big|I_{\tau}(\mu_{\tau},y)-I_{\tau'}(\mu_{\tau'},y) \big| \leq \big|I_{\tau}(\mu_{\tau},y)-I_{\tau'}(\mu_{\tau},y) \big|+\big|I_{\tau'}(\mu_{\tau},y)-I_{\tau'}(\mu_{\tau'},y) \big|.
\end{equation*}
Using \eqref{eq:lip2} the first summand on the right fulfils \begin{align*}
	\big|I_{\tau}(\mu_{\tau},y)-I_{\tau'}(\mu_{\tau},y) \big|\leq |\tau-\tau'|\,(|\mu_{\tau_l}|\vee |\mu_{\tau_u}|+|y|),
\end{align*}
and the second is bounded by \begin{align*}
	\big|I_{\tau'}(\mu_{\tau},y)-I_{\tau'}(\mu_{\tau'},y) \big|\leq |\mu_{\tau}-\mu_{\tau'}|\, \leq |\tau-\tau'|\,\frac{|\mu_{\tau_u}|\vee |\mu_{\tau_l}|+\E{|Y|}}{a},
\end{align*}
utilizing \eqref{eq:lip1} and \eqref{eq:lip_const_mu}. Thus \begin{equation}\label{eq:lip_cF}
	\big|I_{\tau}(\mu_{\tau},y)-I_{\tau'}(\mu_{\tau'},y) \big| \leq |\tau-\tau'|\,(C+|y|)
\end{equation}
for some constant $C\geq 1$. By example~19.7 in combination with Theorem~19.5 in \citet{vdv2000}, $\cF$ is a Donsker class, so that $\sqrt{n} \big(\psi_n(\mu_\cdot) - \psi_0(\mu_\cdot)  \big)$ converges to the process $Z$. The same reasoning as in Theorem~8, \citet{holzmann2016}, then shows continuity of the sample paths of $Z$ with respect to the Euclidean distance on $[\tau_l,\tau_u]$.

\medskip

{\sl Proof of (\ref{eq:maximalprocess}).} 

Setting 
\begin{equation*}
	\mathcal{F}_{\delta_n} =\big\{y\mapsto I_{\tau}\big(\mu_{\tau}+x,y\big)-I_{\tau}\big(\mu_{\tau},y\big) \,\mid \, |x|\leq \delta_n,\, \tau\in[\tau_l,\tau_u] \big\}
\end{equation*}	
we estimate that
\begin{align*}
	 \sup_{\| \varphi\|_{[\tau_l, \tau_u]} \leq \delta_n}\, \sup_{\tau \in [\tau_l, \tau_u]}\, \sqrt{n}\, \big|\psi_n(\mu_\cdot + \varphi)(\tau) - \psi_0(\mu_\cdot + \varphi)(\tau) - \big[ \psi_n(\mu_\cdot)(\tau) - \psi_0(\mu_\cdot)(\tau) \big] \big|
\end{align*}
is smaller than $ \Vert \sqrt{n}\big(F_n-F)\Vert_{\mathcal{F}_{\delta_n}}$. From the triangle inequality, for any $\tau,\tau'\in[\tau_l,\tau_u]$ and $x, x' \in [-\delta_1, \delta_1]$ we first obtain
\begin{align*}
	&\big|I_{\tau}\big(\mu_{\tau}+x,y\big)-I_{\tau}\big(\mu_{\tau},y\big)-\big(I_{\tau'}\big(\mu_{\tau'}+x',y\big)-I_{\tau'}\big(\mu_{\tau'},y\big)\big)\big| \\
	\leq &\big|I_{\tau}\big(\mu_{\tau}+x,y\big)-I_{\tau'}\big(\mu_{\tau'}+x',y\big)\big|+\big|I_{\tau}\big(\mu_{\tau},y\big)- I_{\tau'}\big(\mu_{\tau'},y\big)\big|,
\end{align*}
where the second term was discussed above and the first can be handled likewise to conclude \begin{align}\label{eq:lip_x_plus_mu}
	&\big|I_{\tau}\big(\mu_{\tau}+x,y\big)-I_{\tau'}\big(\mu_{\tau'}+x',y\big)\big|\leq \big(|\tau-\tau'|+|x-x'|\big)\,\big(C+\delta_1+|y|\big)
%	&\leq |\mu_{\tau}-\mu_{\tau'}|+|x-x'|+|\tau-\tau'|\,(|\mu_{\tau_u}|\vee |\mu_{\tau_l}|+\delta_1+|y|)\\
%	&\leq |\tau-\tau'|\,\big(C+\delta_1+|y| \Big)+|x-x'|\\
%	&
\end{align}
with the same $C$ as above. Hence \begin{align*}
	&\big|I_{\tau}\big(\mu_{\tau}+x,y\big)-I_{\tau}\big(\mu_{\tau},y\big)-\big(I_{\tau'}\big(\mu_{\tau'}+x',y\big)-I_{\tau'}\big(\mu_{\tau'},y\big)\big)\big|\\
	 &\leq m(y)\,\big(|\tau-\tau'|+|x-x'|\big)
\end{align*}
with Lipschitz-constant \begin{equation*}
	m(y)=2\,C+\delta_1 + 2\,|y|,
\end{equation*}
which is square-integrable by assumption on $F$. By example~19.7 in \citet{vdv2000} the bracketing number $N_{[\,]}\big(\epsilon, \mathcal{F}_{\delta_1}, L_2(F)\big)$ of $\cF_{\delta_1}$ is of order $\epsilon^{-2}$, so that for the bracketing integral
\[ J_{[\,]} \big(\epsilon_n,\mathcal{F}_{\delta_n},L_2(F)\big) \leq J_{[\,]} \big(\epsilon_n,\mathcal{F}_{\delta_1},L_2(F)\big) \to 0 \quad \text{ as } \epsilon_n \to 0.\]
From (\ref{eq:lip1}), the class $\cF_{\delta_n}$ has envelope $\delta_n$, and hence using Corollary~19.35 in \citet{vdv2000}, we obtain
\begin{align}\label{eq:bracket_int_to_zero}
\E{\Vert \sqrt{n}\big(F_n-F)\Vert_{\mathcal{F}_{\delta_n}} }\leq J_{[\,]} \big(\delta_n,\mathcal{F}_{\delta_n},L_2(F)\big) \to 0.
\end{align}
An application of the Markov inequality ends the proof of (\ref{eq:maximalprocess}). 
\end{proof}

{\sl Details for Step 2.}

\begin{proof}[Proof of Lemma \ref{lem:invertable}.]
Given $\tau \in [\tau_l, \tau_u]$, by \eqref{eq:rep_dif_I} and the lower bound in \eqref{eq:estbasic}, the function 
$x\mapsto I_{\tau}(x,F)$ is strictly decreasing, and its image is all of $\R$. Hence, for any $z \in \R$ there is a unique $x$ satisfying $I_{\tau}(x,F) = z$, which shows that $\psi_0$ is invertible. 

Next for fixed $\varphi\in\ell^{\infty}[\tau_l,\tau_u]$ the preimage $\big((I_{\tau}(\cdot,F)\big)^{\Inv}([-\Vert\varphi\Vert,\Vert\varphi\Vert])$ is by monotonicity an interval $[L_{\tau},U_{\tau}]$, $|L_{\tau}|,|U_{\tau}|<\infty$. By \eqref{eq:rep_I},  \begin{equation*}
	I_{\tau}(x,F)=\tau\,\Big\{\int_{x}^{\infty}\big(1-F(y)\big)\,\dif y+\int_{-\infty}^{x}F(y)\,\dif y\Big\}-\int_{-\infty}^{x}F(y)\,\dif y,
\end{equation*}
thus the map $\tau\mapsto I_{\tau}(x,F)$ is increasing, showing $L_{\tau'}\leq L_{\tau}$ and $U_{\tau'}\leq U_{\tau}$ for $\tau\geq \tau'$. Hence the solution of $z=I_{\tau}(x,F)$ for $z\in[-\Vert\varphi\Vert,\Vert\varphi\Vert]$ lies in $[L_{\tau_l},U_{\tau_u}]$, which means that $\psi_0^{\Inv}(\varphi)$ is bounded. 
\end{proof}

Before we turn to the proof of Lemma \ref{thm:semi_hadamard_diff}, we require the following technical assertions about $\psi_0^{\Inv}$. 

\begin{lemma}\label{lem:rep_diff_psiinv}
Given $t>0$ and $\nu\in\ell^{\infty}[\tau_l,\tau_u]$, we have that

\begin{align}\label{eq:represpsiinv}
	t^{-1}\big(&\psi_0^{\Inv}(t\,\nu)-\psi_0^{\Inv}(0)\big)(\tau) \notag\\
	&= \nu(\tau)\Big\{\tau+(1-2\,\tau)\,\int_{0}^{1}F\big(\mu_{\tau}+s\big(\psi_0^{\Inv}(t\,\nu)(\tau)-\mu_{\tau}\big)\big)\,\dif s\Big\}^{-1}.
\end{align}
In particular, if $\nu_n\in\ell^{\infty}[\tau_l, \tau_u]$ with $\|\nu_n\|\to 0$, then $\|\psi_0^{\Inv}(\nu_n)(\cdot) - \mu_{\cdot}\| \to 0$, so that for any $\tau \in [\tau_l, \tau_u]$ and $\tau_n \to \tau$,  
\begin{equation}\label{eq:convergencepointwise}
\psi_0^{\Inv}(\nu_n)(\tau_n)-\mu_{\tau_n} \to 0.
\end{equation}
\end{lemma}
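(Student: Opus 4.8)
Looking at Lemma~\ref{lem:rep_diff_psiinv}, I need to prove the representation \eqref{eq:represpsiinv} and then the consistency consequence \eqref{eq:convergencepointwise}.

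\textbf{Plan for the representation \eqref{eq:represpsiinv}.} The starting point is the defining property of the inverse: by Lemma~\ref{lem:invertable}, $\psi_0^{\Inv}(t\,\nu)(\tau)$ is the unique $x$ with $I_\tau(x,F) = -t\,\nu(\tau)$, while $\psi_0^{\Inv}(0)(\tau) = \mu_\tau$ is the unique $x$ with $I_\tau(x,F) = 0$ (since $I_\tau(\mu_\tau,F) = \E{I_\tau(\mu_\tau,Y)} = 0$). First I would apply Lemma~\ref{lem:rep_dif_I} with $x_1 = \psi_0^{\Inv}(t\,\nu)(\tau)$ and $x_2 = \mu_\tau$, which gives
\begin{equation*}
	I_\tau\big(\psi_0^{\Inv}(t\,\nu)(\tau),F\big) - I_\tau(\mu_\tau,F) = \big(\mu_\tau - \psi_0^{\Inv}(t\,\nu)(\tau)\big)\,\Big[\tau + (1-2\,\tau)\int_0^1 F\big(\mu_\tau + s(\psi_0^{\Inv}(t\,\nu)(\tau) - \mu_\tau)\big)\,\dif s\Big].
\end{equation*}
The left side equals $-t\,\nu(\tau) - 0 = -t\,\nu(\tau)$. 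The bracketed factor is bounded below by $\min\{\tau_l, 1-\tau_u\} > 0$ by Lemma~\ref{lem:c_bound}, so it is nonzero and I may divide by it. Rearranging, $\psi_0^{\Inv}(t\,\nu)(\tau) - \mu_\tau = \psi_0^{\Inv}(t\,\nu)(\tau) - \psi_0^{\Inv}(0)(\tau)$ equals $\nu(\tau)\,t$ divided by that bracket; dividing through by $t$ yields exactly \eqref{eq:represpsiinv}.

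\textbf{Plan for the consistency \eqref{eq:convergencepointwise}.} Apply \eqref{eq:represpsiinv} with $t = 1$ and $\nu = \nu_n$ (the representation as derived above in fact holds for any $t > 0$ and $\nu$, and the case $t=1$ is just the identity before dividing): from the displayed identity above with $t=1$,
\begin{equation*}
	\big|\psi_0^{\Inv}(\nu_n)(\tau) - \mu_\tau\big| = \frac{|\nu_n(\tau)|}{\tau + (1-2\,\tau)\int_0^1 F(\cdots)\,\dif s} \leq \frac{\|\nu_n\|}{\min\{\tau_l, 1-\tau_u\}},
\end{equation*}
again using Lemma~\ref{lem:c_bound} for the denominator. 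The right-hand side is independent of $\tau$ and tends to $0$, giving $\|\psi_0^{\Inv}(\nu_n)(\cdot) - \mu_\cdot\| \to 0$. Then for $\tau_n \to \tau$, Lipschitz-continuity of $\tau \mapsto \mu_\tau$ from Lemma~\ref{lem:lipschitz} gives $\mu_{\tau_n} \to \mu_\tau$, hence $\psi_0^{\Inv}(\nu_n)(\tau_n) - \mu_{\tau_n}$ is bounded in absolute value by $\|\psi_0^{\Inv}(\nu_n)(\cdot) - \mu_\cdot\| \to 0$, which is \eqref{eq:convergencepointwise} (note \eqref{eq:convergencepointwise} only asserts this difference vanishes, not that $\psi_0^{\Inv}(\nu_n)(\tau_n)$ converges to $\mu_\tau$, though that also follows).

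\textbf{Main obstacle.} There is no serious obstacle here: this lemma is essentially bookkeeping built on Lemma~\ref{lem:rep_dif_I} and Lemma~\ref{lem:c_bound}. The one point requiring a little care is ensuring the division by the bracketed integral factor is legitimate --- this is exactly where the uniform lower bound $\min\{\tau_l, 1-\tau_u\} > 0$ from Lemma~\ref{lem:c_bound} is needed, and it is also what makes the bound in \eqref{eq:convergencepointwise} uniform in $\tau$. I would state explicitly that $\psi_0^{\Inv}(t\nu)(\tau)$ is finite (from Lemma~\ref{lem:invertable}, $\psi_0^{\Inv}(t\nu) \in \ell^\infty[\tau_l,\tau_u]$) so that all the quantities appearing are well-defined real numbers.
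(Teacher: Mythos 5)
Your proposal is correct and takes essentially the same route as the paper: both rest on the increment identity of Lemma~\ref{lem:rep_dif_I} combined with the uniform lower bound of Lemma~\ref{lem:c_bound}, with the sign bookkeeping $I_\tau\big(\psi_0^{\Inv}(t\,\nu)(\tau),F\big)=-t\,\nu(\tau)$ and $I_\tau(\mu_\tau,F)=0$ handled correctly. The only cosmetic difference is that the paper first expands $\psi_0(\rho)(\tau)$ for a general $\rho\in\ell^{\infty}[\tau_l,\tau_u]$ and then substitutes $\rho=\psi_0^{\Inv}(t\,\nu)$, whereas you apply the increment formula directly with $x_1=\psi_0^{\Inv}(t\,\nu)(\tau)$ and $x_2=\mu_\tau$; the consistency step ($t=1$, uniform bound by $\|\nu_n\|/\min\{\tau_l,1-\tau_u\}$) is identical.
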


\begin{proof}
For the first statement, given $\rho\in\ell^{\infty}[\tau_l,\tau_u]$ it follows from (\ref{eq:rep_dif_I}) that
\begin{align*}
		\psi_0\big(\rho\big)(\tau) &= \psi_0\big(\mu_{\cdot} + \big(\rho -\mu_{\cdot}\big)\big)(\tau) - \psi_0\big(\mu_{\cdot}\big)(\tau)\\
		&= -\Big(I_{\tau}\big(\mu_{\tau}+\big(\rho(\tau) -\mu_{\tau}\big),F\big) - I_{\tau}(\mu_{\tau},F)\Big)\\
		&=\big(\rho(\tau)-\mu_{\tau}\big)\,\Big\{\tau + (1-2\,\tau)\int_{0}^{1}F\big(\mu_{\tau} + s(\rho(\tau)-\mu_{\tau})\big)\,\dif s\Big\}.
	\end{align*}
The integral on the right hand side is bounded away from zero by (\ref{eq:estbasic}) and thus choosing $\rho=\psi_0^{\Inv}(t\,\nu)$, observing $\mu_\tau = \psi_0^{\Inv}(0)(\tau)$ and reorganising the above equation leads to \begin{align}
	&t^{-1}\Big(\psi_0^{\Inv}(0+t\,\nu)-\psi_0^{\Inv}(0)\Big)(\tau)\notag\\
	&= t^{-1}\psi_0\big(\psi_0^{\Inv}\big(t\,\nu\big)\big)(\tau)\,\Big\{\tau+(1-2\,\tau)\int_{0}^{1}F\big(\mu_{\tau}+s\big(\psi_0^{\Inv}(t\,\nu)(\tau)-\mu_{\tau}\big)\big)\,\dif s\Big\}^{-1}\notag\\
	&=\nu(\tau) \,\Bigg\{\tau+(1-2\,\tau)\int_{0}^{1}F\big(\mu_{\tau}+s\big(\psi_0^{\Inv}(t\,\nu)(\tau)-\mu_{\tau}\big)\big)\,\dif s\Bigg\}^{-1}.\label{eq:reprinversephi}
\end{align}

In the sequel, given $\varphi \in\ell^{\infty}[\tau_l,\tau_u]$ let us introduce the function
\begin{equation}\label{eq:notationfct}
	c_{\varphi}(\tau) = \tau + (1-2\,\tau)\,\int_{0}^{1}F\big(\mu_{\tau} + s\,\varphi(\tau)\big)\,\dif s.
\end{equation}
By (\ref{eq:estbasic}), $\min\big\{\tau_l,1-\tau_u\big\}  \leq  c_{\varphi} \leq 3/2$ holds uniformly for any $\varphi$. 

Now, for the second part, set $\varphi_n= \big(\psi_0^{\Inv}(\nu_n)(\cdot)-\mu_{\cdot}\big)$, then (\ref{eq:represpsiinv}) gives with $t=1$ 
\begin{align*}
		\big\Vert \psi_0^{\Inv}(\nu_n)(\cdot)-\mu_{\cdot}\big\Vert \leq \big\Vert \nu_n\big\Vert \,\Vert c_{\varphi_n}\Vert^{-1} \leq \big\Vert \nu_n\big\Vert \big(\min\big\{\tau_l,1-\tau_u\big\}\big)^{-1}  \to 0.
	\end{align*}
Then (\ref{eq:convergencepointwise}) follows by continuity of $\mu_\cdot$, see Lemma \ref{lem:lipschitz}. 
\end{proof}

\begin{proof}[Proof of Lemma \ref{thm:semi_hadamard_diff}.]
	Let $t_n\to 0$, $t_n>0$, $(\varphi_n)_n\subset \ell^{\infty}[\tau_l, \tau_u]$ with $\varphi_n\to\varphi \in \mathcal{C}[\tau_l, \tau_u]$ with respect to $d_{hypi}$ and thus uniformly by Proposition 2.1 in \citet{buecher2014}. From (\ref{eq:represpsiinv}), and using the notation (\ref{eq:notationfct}) we can write 
	\begin{equation*}
		t_n^{-1}\big(\psi_0^{\Inv}(t_n\,\varphi_n)-\psi_0^{\Inv}(0)\big) = \varphi_n/c_{\kappa_n}, \qquad \kappa_n(\tau)= \psi_0^{\Inv}(t_n\,\varphi_n)(\tau)-\mu_{\tau}
	\end{equation*}
and we need to show that
\begin{equation}\label{eq:hypiconvtoshow}
	\varphi_n/c_{\kappa_n} \to \dot{\psi}^{\Inv}(\varphi) = \varphi/ c_0
	\end{equation}
	with respect to $d_{hypi}$, where $c_0$ is as in (\ref{eq:notationfct}) with $\varphi=0$.  
	
	Now, since $\varphi_n \to \varphi$ uniformly and $\varphi$ is continuous, to obtain (\ref{eq:hypiconvtoshow}) if suffices by Lemma~\ref{lem:diverses}, i) and iii), to show that $c_{\kappa_n} \to c_0$ under $d_{hypi}$. To this end, by Lemma~A.4, \citet{buecher2014} and Lemma~\ref{lem:diverses}, iii), it suffices to show that under $d_{hypi}$
\begin{equation}\label{eq:hypiremains}
	h_n(\tau) = \int_{0}^{1}F\big(\mu_{\tau}+s\big(\psi_0^{\Inv}(t_n\,\varphi_n)(\tau)-\mu_{\tau}\big)\big)\,\dif s \to F(\mu_\tau) = h(\tau),
\end{equation}
for which we shall use Corollary~A.7 in \citet{buecher2014}. Let
\[ \mathbb{T}=[\tau_l ,\tau_u], \qquad \mathcal{S} = \mathbb{T}\setminus \big\{\tau\in[\tau_l,\tau_u] \,\mid\, F \text{ is not continuous in } \mu_{\tau} \big\},\]
so that $\mathcal{S}$ is dense in $\mathbb{T}$ and $h\arrowvert_{\mathcal{S}}$ is continuous. Using the notation from \citet{buecher2014}, Appendix A.2, we have that 
\begin{equation}\label{eq:does_not_work_for_quantile}
	\big(h\arrowvert_{\mathcal{S}}\big)_{\wedge}^{\mathcal{S}:\mathbb{T}}=h_{\wedge}=F(\mu_{\cdot}-) \quad\text{and}\quad \big(h\arrowvert_{\mathcal{S}}\big)_{\vee}^{\mathcal{S}:\mathbb{T}}=h_{\vee}=h,
	\end{equation}
where the first equalities follow from the discussion in \citet{buecher2014}, Appendix A.2, and the second equalities from 
Lemma~\ref{lem:diverses}, ii) below, and where $F(x-) = \lim_{t\uparrow x} F(t)$ denotes the left-sided limit of $F$ at $x$. If we show that 
 \begin{enumerate}
		\item for all $\tau\in[\tau_l,\tau_u]$ with $\tau_n\to\tau$ it holds that $\liminf_n \,h_n(\tau_n)\geq F(\mu_{\tau}-)$ and
		\item for all $\tau\in[\tau_l,\tau_u]$ with $\tau_n\to\tau$ it holds that $\limsup_n \,h_n(\tau_n)\leq F(\mu_{\tau})$,
	\end{enumerate}
	Corollary~A.7 in \citet{buecher2014} implies (\ref{eq:hypiremains}), which concludes the proof of the convergence in (\ref{eq:hypiconvtoshow}). 
	
To this end, concerning (i), we compute that
\begin{align*}
	F(\mu_{\tau}-)&\leq\int_{0}^{1}\liminf_{n} F\big(\mu_{\tau_n}+s\,\big(\psi_0^{\Inv}(t_n\,\varphi_n)(\tau_n)-\mu_{\tau_n}\big)\big)\,\dif s\\
	&\leq \liminf_{n}\int_{0}^{1} F\big(\mu_{\tau_n}+s\,\big(\psi_0^{\Inv}(t_n\,\varphi_n)(\tau_n)-\mu_{\tau_n}\big)\big)\,\dif s = \liminf_n \,h_n(\tau_n),
	\end{align*}
where the first inequality follows from (\ref{eq:convergencepointwise}) and the fact that $	F(\mu_{\tau}-) \leq F(\mu_{\tau})$, and the second inequality follows from Fatou's lemma. For (ii) we argue analogously
\begin{align*}
	F(\mu_{\tau})&\geq \int_{0}^{1}\limsup_{n} F\big(\mu_{\tau_n}+s\,\big(\psi_0^{\Inv}(t_n\,\varphi_n)(\tau_n)-\mu_{\tau_n}\big)\big)\,\dif s\\
	&\geq \limsup_{n}\int_{0}^{1} F\big(\mu_{\tau_n}+s\,\big(\psi_0^{\Inv}(t_n\,\varphi_n)(\tau_n)-\mu_{\tau_n}\big)\big)\,\dif s = \limsup_n \,h_n(\tau_n).
	\end{align*}
This concludes the proof of the lemma. 	
\end{proof}

\subsection{Details for the proof of Theorem \ref{thm:main_thm_boot}}
We let $\psi_n^*(\varphi)(\tau) = - I_\tau\big(\varphi(\tau), F_n^* \big)$, $\varphi \in \ell^{\infty}[\tau_l, \tau_u]$, and denote by $\prob_n^*$ the conditional law of $Y_1^*, \ldots, Y_n^*$ given $Y_1, \ldots, Y_n$, and by $\mathbb{E}_n^*$ expectation under this conditional law.   
\begin{lemma}\label{lem:bootstrapfirststep}
We have, almost surely, conditionally on $Y_1, Y_2, \ldots $, the following statements. \begin{enumerate}
	\item If $\E{|Y|}<\infty$, then
	\begin{equation}\label{eq:boot_consist}
	\sup_{\tau \in [\tau_l, \tau_u]} \big|\mu_{\tau, n}^*-\hat{\mu}_{\tau,n}\big| = o_{\prob_n^*}(1).
	\end{equation}
\end{enumerate} 
Now assume $\E{Y^2} < \infty$. 

\begin{enumerate}[resume*]
	
	\item Weakly in $(\ell^\infty[\tau_l, \tau_u],\Vert\cdot\Vert)$ it holds that
	\begin{equation}\label{eq:condconv}
		\sqrt{n}\big(\psi_n^*(\hat{\mu}_{\cdot,n})-\psi_n(\hat{\mu}_{\cdot,n})\big) \to Z
	\end{equation}
	with $Z$ as in Theorem~\ref{thm:main_thm}.
	
	\item For every sequence $\delta_n\to 0$ it holds that 
	\begin{align}\label{eq:developboot}
		\begin{split}
		& \sup_{\| \varphi\| \leq \delta_n}\, \sup_{\tau \in [\tau_l, \tau_u]}\, \sqrt{n}\, \big|\psi_n^*(\hat{\mu}_{\cdot,n} + \varphi)(\tau) - \psi_n(\hat{\mu}_{\cdot,n} + \varphi)(\tau) \\
		& \qquad \qquad \qquad \qquad \qquad- \big[ \psi_n^*(\hat{\mu}_{\cdot,n})(\tau) - \psi_n(\hat{\mu}_{\cdot,n}+)(\tau) \big] 	\big|=  o_{\prob_n^*}(1).
		\end{split}
	\end{align}
	
	\item Weakly in $(\ell^\infty[\tau_l, \tau_u],\Vert\cdot\Vert)$ we have that
	\begin{align}\label{eq:conv_psi_n_star}
		\sqrt{n}\big(\psi_n(\mu_{\cdot,n}^*)-\psi_n(\hat{\mu}_{\cdot,n})\big) \to Z.
	\end{align}
\end{enumerate}

\end{lemma}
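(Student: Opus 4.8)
The plan is to establish (i)--(iv) as the bootstrap counterparts of the three ingredients behind Step~1 in the proof of Theorem~\ref{thm:main_thm}: uniform consistency of the expectile functional, weak convergence of the ``value process'', and stochastic equicontinuity of its increments; assertion (iv) then follows by combining them. Throughout I would work conditionally on $Y_1,Y_2,\dots$, write $\mathbb{G}_n^* = \sqrt n\,(\mathbb{E}_n^*-\mathbb{E}_n)$ for the bootstrap empirical process, and use repeatedly that $I_\tau(x,F_n)=\mathbb{E}_n[I_\tau(x,Y)]$, $I_\tau(x,F_n^*)=\mathbb{E}_n^*[I_\tau(x,Y)]$ and $I_\tau(\hat\mu_{\tau,n},F_n)=0=I_\tau(\mu^*_{\tau,n},F_n^*)$.

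For (i) I would repeat the proof of Theorem~1 in \citet{holzmann2016} with $F_n$ playing the role of the reference distribution. The contraction estimate from \eqref{eq:rep_dif_I} and \eqref{eq:estbasic} gives $a\,|\mu^*_{\tau,n}-\hat\mu_{\tau,n}| \le |I_\tau(\mu^*_{\tau,n},F_n)-I_\tau(\mu^*_{\tau,n},F_n^*)|$ with $a=\min\{\tau_l,1-\tau_u\}$, using $I_\tau(\hat\mu_{\tau,n},F_n)=0$, and the same estimate applied at a fixed centre shows $\sup_\tau|\mu^*_{\tau,n}|=O_{\prob_n^*}(1)$ a.s., because $\mathbb{E}_n^*[|Y|]$ is conditionally bounded as $\mathbb{E}_n[|Y|]\to\mathbb{E}[|Y|]<\infty$ a.s. Hence it suffices to control $\sup_\tau\sup_{|x|\le M}|I_\tau(x,F_n)-I_\tau(x,F_n^*)|$ for a fixed $M$; by \eqref{eq:rep_I} this is a bounded linear combination of $\mathbb{E}_n^*[(Y-x)^+]-\mathbb{E}_n[(Y-x)^+]$ and $\mathbb{E}_n^*[(x-Y)^+]-\mathbb{E}_n[(x-Y)^+]$, and the classes $\{y\mapsto(y-x)^+:|x|\le M\}$, $\{y\mapsto(x-y)^+:|x|\le M\}$ are Glivenko--Cantelli with envelope $|Y|+M$, so a conditional Glivenko--Cantelli theorem together with $\mathbb{E}_n[|Y|]\to\mathbb{E}[|Y|]$ a.s.\ yields the uniform convergence (using also $\hat\mu_{\cdot,n}\to\mu_\cdot$ uniformly a.s.\ to place $\mu^*_{\tau,n}$ in $[-M,M]$ eventually).

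Parts (ii) and (iii) are statements about $\mathbb{G}_n^*$ alone and use the bootstrap versions of the empirical-process estimates already established for Lemma~\ref{lem:expecexapnsion}. For (iii) the left-hand side equals $\sup_{\|\varphi\|\le\delta_n}\sup_\tau|\mathbb{G}_n^*(I_\tau(\hat\mu_{\tau,n}+\varphi(\tau),\cdot)-I_\tau(\hat\mu_{\tau,n},\cdot))| \le \|\mathbb{G}_n^*\|_{\widetilde{\mathcal F}_{\delta_n}}$, where $\widetilde{\mathcal F}_{\delta_n}=\{y\mapsto I_\tau(\hat\mu_{\tau,n}+x,y)-I_\tau(\hat\mu_{\tau,n},y):\tau\in[\tau_l,\tau_u],\,|x|\le\delta_n\}$. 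Since $\hat\mu_{\cdot,n}$ eventually lies uniformly within $\delta_1$ of $\mu_\cdot$, this random class is, for large $n$, contained in the fixed class $\{y\mapsto I_\tau(\mu_\tau+u,y)-I_\tau(\mu_\tau+v,y):\tau\in[\tau_l,\tau_u],\,|u|,|v|\le 2\delta_1\}$, which is $F$-Donsker by the bracketing estimate already used for Lemma~\ref{lem:expecexapnsion}, while by \eqref{eq:lip1} it shrinks to the zero function in the intrinsic $L_2(F)$-semimetric since $\|I_\tau(\hat\mu_{\tau,n}+x,\cdot)-I_\tau(\hat\mu_{\tau,n},\cdot)\|_{L_2(F)}\le|x|\le\delta_n$. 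Conditional, almost sure asymptotic equicontinuity of the bootstrap empirical process over a Donsker class (\citet{vdvW2013}, Theorem~3.6.2) then gives $\|\mathbb{G}_n^*\|_{\widetilde{\mathcal F}_{\delta_n}}=o_{\prob_n^*}(1)$ a.s. For (ii), $\sqrt n\big(\psi_n^*(\hat\mu_{\cdot,n})-\psi_n(\hat\mu_{\cdot,n})\big)(\tau)=-\mathbb{G}_n^*(I_\tau(\hat\mu_{\tau,n},\cdot))$; the conditional bootstrap central limit theorem applied to the fixed Donsker class $\mathcal F=\{y\mapsto I_\tau(\mu_\tau,y):\tau\in[\tau_l,\tau_u]\}$ of Lemma~\ref{lem:expecexapnsion} shows $\mathbb{G}_n^*(I_\cdot(\mu_\cdot,\cdot))$ converges conditionally a.s.\ to a centered Gaussian process with covariance $\cov_F(I_\tau(\mu_\tau,Y),I_{\tau'}(\mu_{\tau'},Y))=\mathbb{E}[I_\tau(\mu_\tau,Y)I_{\tau'}(\mu_{\tau'},Y)]$, using $I_\tau(\mu_\tau,F)=0$; this is the law of $Z$. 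Replacing $\mu_\tau$ by $\hat\mu_{\tau,n}$ costs $o_{\prob_n^*}(1)$ a.s.\ by the equicontinuity argument used for (iii) (equivalently, by the changing-classes central limit theorem, \citet{vdv2000}, Theorem~19.28), and $Z\stackrel{d}{=}-Z$ gives the stated limit.

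Finally, (iv) is algebra given (i)--(iii): the displayed identities yield $\sqrt n\big(\psi_n(\mu^*_{\cdot,n})-\psi_n(\hat\mu_{\cdot,n})\big)(\tau)=-\sqrt n\,I_\tau(\mu^*_{\tau,n},F_n)=\mathbb{G}_n^*(I_\tau(\mu^*_{\tau,n},\cdot))$. Writing $\mu^*_{\tau,n}=\hat\mu_{\tau,n}+\varphi^*_n(\tau)$ with $\|\varphi^*_n\|=o_{\prob_n^*}(1)$ by (i), and applying (iii) along a deterministic $\delta_n\searrow0$ that eventually dominates $\|\varphi^*_n\|$ in $\prob_n^*$-probability, one obtains $\mathbb{G}_n^*(I_\tau(\mu^*_{\tau,n},\cdot))=\mathbb{G}_n^*(I_\tau(\hat\mu_{\tau,n},\cdot))+o_{\prob_n^*}(1)$ uniformly in $\tau$, which by (ii) converges conditionally a.s.\ to $-Z\stackrel{d}{=}Z$. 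The main obstacle is not any single estimate but the bookkeeping required to run the empirical-process machinery (maximal inequalities, asymptotic equicontinuity, the changing-classes central limit theorem) in the \emph{conditional, almost sure} bootstrap regime while simultaneously allowing the reference measure $F_n$ to vary with $n$ and the index set of the process to be re-centered at the data-dependent $\hat\mu_{\cdot,n}$; the convergences $\mathbb{E}_n[|Y|]\to\mathbb{E}[|Y|]$ and $\mathbb{E}_n[m(Y)^2]\to\mathbb{E}[m(Y)^2]$ a.s.\ and the embedding of the random ``small'' classes into fixed $F$-Donsker classes are the devices that make this go through.
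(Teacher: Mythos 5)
Your proposal is correct and follows the same overall architecture as the paper (a bootstrap mirror of Step~1: consistency, CLT for the value process, equicontinuity of increments, then the algebra using $\psi_n(\hat\mu_{\cdot,n})=\psi_n^*(\mu^*_{\cdot,n})=0$), but the technical devices differ in parts \textit{(i)}--\textit{(iii)}. For \textit{(i)} the paper first proves pointwise conditional consistency in the style of Lemma~5.10 of \citet{vdv2000}, comparing $I_\tau(\hat\mu_{\tau,n}\pm\varepsilon,F_n^*)$ with $I_\tau(\mu_\tau\pm\varepsilon,F)$ via $\Ens{I_\tau(\hat\mu_{\tau,n}\pm\varepsilon,F_n^*)}=I_\tau(\hat\mu_{\tau,n}\pm\varepsilon,F_n)$, and then upgrades to uniformity by a grid argument exploiting monotonicity of the expectile in $\tau$ and continuity of $\tau\mapsto\hat\mu_{\tau,n}$; you instead use the contraction bound from \eqref{eq:rep_dif_I} and \eqref{eq:estbasic} (applied with $F_n$, which is legitimate since that representation is distribution-free) together with a conditional localization $\sup_\tau|\mu^*_{\tau,n}|=O_{\prob_n^*}(1)$ and a bootstrap Glivenko--Cantelli statement over the fixed classes $\{(y-x)^+,(x-y)^+:|x|\le M\}$, which delivers uniformity in one stroke. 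For \textit{(ii)} and \textit{(iii)} the paper works with the data-dependent classes directly, using the changing-classes CLT (Theorem~19.28 of \citet{vdv2000}) and the bracketing maximal inequality (Corollary~19.35) with $L_2(F_n)$-brackets and almost sure convergence of empirical moments; you embed the random classes (re-centered at $\hat\mu_{\cdot,n}$, justified by $\eta_n=\|\hat\mu_{\cdot,n}-\mu_\cdot\|\to0$ a.s.\ and \eqref{eq:lip_x_plus_mu}) into fixed $F$-Donsker classes, invoke the almost sure conditional bootstrap CLT (Theorem~3.6.2 of \citet{vdvW2013}) for the main term, and obtain the negligible terms from asymptotic $\rho_F$-equicontinuity of $\mathbb{G}_n^*$ together with the $L_2(F)$-bound \eqref{eq:lip1}. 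Your route buys freedom from bracketing computations under the random measure $F_n$, at the price of having to extract, for each fixed data sequence, equicontinuity from the conditional weak convergence of the (triangular-array) bootstrap process and to carry out the $O_{\prob_n^*}(1)$ localization in \textit{(i)}; both are standard and your covariance identification (using $I_\tau(\mu_\tau,F)=0$, so the $F$-Brownian-bridge covariance equals that of $Z$) and the symmetry $Z\stackrel{d}{=}-Z$ close the argument. Part \textit{(iv)} is the same computation as in the paper.
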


\begin{proof}[{Proof of Lemma \ref{lem:bootstrapfirststep}.}]

First consider (\ref{eq:boot_consist}). We start with individual consistency, the proof of which is inspired by Lemma~5.10, \citet{vdv2000}. 
	Since $I_{\tau}(\mu^*_{\tau,n},F_n^*)=0$ and $x\mapsto I_{\tau}(x,F_n^*)$ is strictly decreasing, for any $\varepsilon,\eta_l,\eta_u>0$ the inequality $I_{\tau}(\hat{\mu}_{\tau,n}-\varepsilon,F_n^*)>\eta_l$ implies $\mu^*_{\tau,n}>\hat{\mu}_{\tau,n}-\varepsilon$ and from $I_{\tau}(\hat{\mu}_{\tau,n}+\varepsilon,F_n^*)<-\eta_u$ it follows that $\mu^*_{\tau,n}<\hat{\mu}_{\tau,n}+\varepsilon$. Thus \begin{equation*}%\label{eq:boot_conv_prob}
		\prob_n^*\big(I_{\tau}(\hat{\mu}_{\tau,n}-\varepsilon,F_n^*)>\eta_l,\, I_{\tau}(\hat{\mu}_{\tau,n}+\varepsilon,F_n^*)<-\eta_u\big) \leq \prob_n^*\big(\hat{\mu}_{\tau,n}-\varepsilon<\mu^*_{\tau,n}<\hat{\mu}_{\tau,n}+\varepsilon\big)
	\end{equation*}
	and it suffices to show almost sure convergence of the left hand side to $1$ for appropriately chosen $\eta_l,\eta_u>0$, for which it is enough to deduce $P_n^*\big(I_{\tau}(\hat{\mu}_{\tau,n}-\varepsilon,F_n^*)>\eta_l\big)\to 1$ and $P_n^*\big(I_{\tau}(\hat{\mu}_{\tau,n}+\varepsilon,F_n^*)<-\eta_u\big)\to 1$ almost surely. 
	Choose $2\,\eta_l= I_{\tau}(\mu_{\tau}-\varepsilon,F)\ne 0$ to obtain 
	\begin{align*}
		\prob_n^*\big(I_{\tau}(\hat \mu_{\tau,n}-\varepsilon,F_n^*)>\eta_l\big)&\geq \prob_n^*\big(\big| I_{\tau}(\mu_{\tau}-\varepsilon,F)-|I_{\tau}(\hat \mu_{\tau,n}-\varepsilon,F_n^*)-I_{\tau}(\mu_{\tau}-\varepsilon,F)|\big| >\eta_l \big)\\
		& \geq \prob_n^*\big(|I_{\tau}(\hat \mu_{\tau,n}-\varepsilon,F_n^*)-I_{\tau}(\mu_{\tau}-\varepsilon,F)|<\eta_l \big)
	\end{align*}
	by the inverse triangle inequality. Similar for $2\,\eta_u= I_{\tau}(\mu_{\tau}+\varepsilon,F)\ne 0$ we get that 
	\begin{align*}
		\prob_n^*\big(I_{\tau}(\mu^*_{\tau,n}+\varepsilon,F_n^*)<-\eta_u\big)&\geq \prob_n^*\big(|I_{\tau}(\mu^*_{\tau,n}+\varepsilon,F_n^*)-I_{\tau}(\mu_{\tau}+\varepsilon,F)|<\eta_u \big).
	\end{align*}
	In both inequalities the right hand side converges to $1$ almost surely, provided that almost surely, $I_{\tau}(\hat{\mu}_{\tau,n}\pm\varepsilon,F_n^*)\to I_{\tau}(\mu_{\tau}\pm\varepsilon,F)$ in probability conditionally on $Y_1,Y_2,\ldots$.
	To this end, start with \begin{equation*}
		\Ens{I_{\tau}(\hat{\mu}_{\tau,n}\pm\varepsilon,F_n^*)} = \sum_{i=1}^{n} \prob_n^*(Y_1^*=Y_i)\,I_{\tau}(\hat{\mu}_{\tau,n}\pm\varepsilon, Y_i) = I_{\tau}(\hat{\mu}_{\tau,n}\pm\varepsilon,F_n),
	\end{equation*}
so that it remains to show convergence of the right hand side to $I_{\tau}(\mu_{\tau}\pm\varepsilon,F)$ for almost every sequence $Y_1,Y_2,\ldots$. To this end it holds that \begin{equation*}
		\big|I_{\tau}(\hat{\mu}_{\tau,n}\pm\varepsilon,F_n)- I_{\tau}(\mu_{\tau}\pm\varepsilon,F_n) \big|\leq \big|\hat{\mu}_{\tau,n}-\mu_{\tau}\big|
	\end{equation*}
	by Lemma~\ref{lem:lipschitz}, where the bound converges to $0$ by the strong consistency of $\hat{\mu}_{\tau,n}$ \citep[Theorem~2]{holzmann2016}. Further $I_{\tau}(\mu_{\tau}\pm\varepsilon,F_n) \to I_{\tau}(\mu_{\tau}\pm\varepsilon,F)$ almost surely by the strong law of large numbers, thus $I_{\tau}(\hat{\mu}_{\tau,n}\pm\varepsilon,F_n)$ converges to $I_{\tau}(\mu_{\tau}\pm\varepsilon, F)$ for almost every sequence $Y_1,Y_2,\ldots$, what concludes the proof of individual consistency of $\mu^*_{\tau,n}$. 
		To strengthen this to uniform consistency, we use a Glivenko-Cantelli argument as in \citet{holzmann2016}, Theorem~2. 	
	Let $d_n=\hat{\mu}_{\tau_u,n}-\hat{\mu}_{\tau_l,n}$ and observe that $d_n\to d=\mu_{\tau_u}-\mu_{\tau_l}$ almost surely by consistency of $\hat{\mu}_{\tau,n}$. Let $m\in\N$ and choose $\tau_l=\tau_0\leq \tau_1\leq \ldots\leq \tau_m=\tau_u$ such that 
	\begin{equation*}
		\hat{\mu}_{\tau_k,n}=\hat{\mu}_{\tau_l,n}+\frac{k\,d_n}{m} \qquad (k\in\{1,\ldots,m\}),
	\end{equation*}
 which is possible because of the continuity of $\tau\mapsto\hat{\mu}_{\tau,n}$. Because the expectile functional is strictly increasing in $\tau$ it follows that 
\begin{align*}
		\mu_{\tau_{k},n}^*-\hat{\mu}_{\tau_{k+1},n}   \leq \mu^*_{\tau,n}-\hat{\mu}_{\tau,n} 
		\leq \mu_{\tau_{k+1},n}^*-\hat{\mu}_{\tau_{k},n} \qquad (\tau_k\leq \tau\leq \tau_{k+1}).
	\end{align*}
	This implies \begin{equation*}
		\sup_{\tau \in [\tau_l, \tau_u]} \big|\mu^*_{\tau,n} -\hat{\mu}_{\tau,n}\big| \leq \max_{1\leq k \leq m}\big|\mu_{\tau_{k},n}^*-\hat{\mu}_{\tau_{k},n} \big| + \frac{d_n}{m},
	\end{equation*}
	hence \begin{equation*}
		\limsup_{n} \| \mu_{\tau,n}^*-\hat{\mu}_{\tau,n}\|_{[\tau_l,\tau_u]}\leq \limsup_{n}\,\max_{1\leq k \leq m} \big|\mu_{\tau_{k},n}^*-\hat{\mu}_{\tau_{k},n} \big|+\limsup_{n}\frac{d_n}{m}=\frac{d}{m}
	\end{equation*}
	conditionally in probability for almost every sequence $Y_1,Y_2,\ldots$. Letting $m\to\infty$ completes the proof. 

\medskip
	
{\sl Proof of (\ref{eq:condconv}).}\quad The idea is to use Theorem~19.28, \cite{vdv2000}, for the random class \begin{align*}
		\mathcal{F}_n=\big\{y\mapsto -I_{\tau}(\hat{\mu}_{\tau,n},y)\,\mid\, \tau\in[\tau_l,\tau_u]\big\},
	\end{align*}
	which is a subset of \begin{align*}
		\mathcal{F}_{\eta_n}=\big\{y\mapsto -I_{\tau}(\mu_{\tau}+x,y)\,\mid\, |x|\leq \eta_n,\, \tau\in[\tau_l,\tau_u]\big\}
	\end{align*}
%	
%	NOTATION: $\delta_n$ IST IN DER AUSSAGE DES LEMMAS DETERMINISTISCH (ALSO ETWAS ANDERES)!
%	
	for the sequence $\eta_n = \sup_{\tau \in [\tau_l, \tau_u]} |\hat{\mu}_{\tau,n}-\mu_{\tau}|$, hence, almost surely,  
	\begin{align*}
		J_{[\,]}(\varepsilon ,\mathcal{F}_{n},L_2(F_n)) \leq J_{[\,]}(\varepsilon,\mathcal{F}_{\eta_n},L_2(F_n)). 
	\end{align*}
The class $\mathcal{F}_{\eta_n}$ has envelope $(|\mu_{\tau_l}| + |\mu_{\tau_u}| + \eta_n + y)$, which satisfies the Lindeberg condition.

 By (\ref{eq:lip_x_plus_mu}) the class $\mathcal{F}_{\eta_n}$ is a class consisting of Lipschitz-functions with Lipschitz-constant $m_n(y)= (C+\eta_n+|y|)$ for some $C\geq 1$, such that the bracketing number fulfils \begin{equation*}
		N_{[\,]}(\delta, \mathcal{F}_{\eta_n}, L_2(F_n))\leq K\,\,\Big\{\En{m_n(Y)^2}\,\frac{\eta_n+\tau_u-\tau_l}{\delta}\Big\}^2
	\end{equation*}
	by Example~19.7, \citet{vdv2000}, where $K$ is some constant not depending on $n$. By the strong law of large numbers, $\En{m_n(Y)^2}\to \E{(C+|Y|)^2}$ holds almost surely, in addition $\eta_n\to 0$ almost surely by the strong consistency of $\hat{\mu}_{\tau,n}$, such that the above bracketing number is of order $1/\delta^2$. Thus the bracketing integral $J_{[\,]}(\varepsilon_n, \mathcal{F}_{\eta_n}, L_2(F_n))$ converges to $0$ almost surely for every sequence $\varepsilon_n\searrow 0$.
	
	Next we show the almost sure convergence of the expectation $\mathbb{E}_n\big[I_{\tau}(\hat{\mu}_{\tau,n},Y)I_{\tau'}(\hat{\mu}_{\tau',n},Y)\big]$ to $\E{I_{\tau}(\mu_{\tau},Y)I_{\tau'}(\mu_{\tau'},Y)}$. First it holds that \begin{align*}
		\En{I_{\tau}(\hat{\mu}_{\tau,n},Y)I_{\tau'}(\hat{\mu}_{\tau',n},Y) }&= \En{I_{\tau}(\hat{\mu}_{\tau,n},Y)\,\big(I_{\tau'}(\hat{\mu}_{\tau',n},Y)-I_{\tau'}(\mu_{\tau'},Y) \big)}\\
		&\quad+\En{I_{\tau'}(\mu_{\tau'},Y)\,\big(I_{\tau}(\hat{\mu}_{\tau,n},Y)-I_{\tau}(\mu_{\tau},Y) \big)}\\
		&\quad+\En{I_{\tau}(\mu_{\tau},Y)I_{\tau'}(\mu_{\tau'},Y)},
	\end{align*}
	where the last summand converges almost surely to $\E{I_{\tau}(\mu_{\tau},Y)I_{\tau'}(\mu_{\tau'},Y)}$ by the strong law of large numbers. For the first term we estimate \begin{align*}
		\En{|I_{\tau}(\hat{\mu}_{\tau,n},Y)|\,\big|I_{\tau'}(\hat{\mu}_{\tau',n},Y)-I_{\tau'}(\mu_{\tau'},Y) \big|} \leq \big\{|\hat{\mu}_{\tau,n}|+\En{|Y|} \big\}\,\big| \hat{\mu}_{\tau',n}-\mu_{\tau'}\big|
	\end{align*}
	with aid of Lemma~\ref{lem:lipschitz}, where the upper bound converges to $0$ almost surely by the strong law of large numbers, strong consistency of $\hat{\mu}_{\tau',n}$ and boundedness of $\hat{\mu}_{\tau,n}$, which in fact also follows from the strong consistency of the empirical expectile and since $\tau \in [\tau_l,\tau_u]$. The remaining summand above is treated likewise, hence $\En{I_{\tau}(\hat{\mu}_{\tau,n},Y)I_{\tau'}(\hat{\mu}_{\tau',n},Y)}$ indeed converges almost surely to the asserted limit. 
	The assertion now follows from Theorem~19.28, \citet{vdv2000}.%, where the 'almost surely' in our statement refers to the set $\{\delta_n\to 0\}\cap \big\{ E_n\{m(Y)^2\}\to E\{(C+|Y|)^2\} \big\}$. 

%\medskip
%
%WAR DIE KONVERGENZ DER KOVARIANZFUNKTIONEN IN DER ANWENDUNG VON Theorem 19.28 in vdV KLAR?

\medskip

{\sl Proof of (\ref{eq:developboot}).} \quad Set $\eta_n= \sup_{\tau \in [\tau_l, \tau_u]} |\hat{\mu}_{\tau,n}-\mu_{\tau}|$ again, then as a first step we can estimate \begin{align*}
		&\sup_{\| \varphi\| \leq \delta_n}\, \sup_{\tau \in [\tau_l, \tau_u]}\, \sqrt{n}\, \big|\psi_n^*(\hat{\mu}_{\cdot,n} + \varphi)(\tau) - \psi_n(\hat{\mu}_{\cdot,n} + \varphi)(\tau) - \big[ \psi_n^*(\hat{\mu}_{\cdot,n} )(\tau) - \psi_n(\hat{\mu}_{\cdot,n})(\tau) \big] \big|\\
		=& \sup_{\| \varphi\| \leq \delta_n}\, \sup_{\tau \in [\tau_l, \tau_u]}\, \sqrt{n}\, \big|\psi_n^*\big\{\mu_{\cdot}+(\hat{\mu}_{\cdot,n}-\mu_{\cdot} + \varphi)\big\}(\tau) - \psi_n\big\{\mu_{\cdot}+(\hat{\mu}_{\cdot,n}-\mu_{\cdot} + \varphi)\big\}(\tau) \\
		& \qquad \qquad \qquad \qquad \qquad\quad- \big[ \psi_n^*\big\{\mu_{\cdot}+(\hat{\mu}_{\cdot,n}-\mu_{\cdot})\big\}(\tau) - \psi_n\big\{\mu_{\cdot}+(\hat{\mu}_{\cdot,n}-\mu_{\cdot})\big\}(\tau) \big] \big|\\
		\leq & \sup_{| x_1|,|x_2| \leq \delta_n+\eta_n}\, \sup_{\tau \in [\tau_l, \tau_u]}\, \sqrt{n}\, \big|\psi_n^*(\mu_{\cdot} + x_1)(\tau) - \psi_n(\mu_{\cdot} + x_1)(\tau) \\
		& \qquad \qquad \qquad \qquad \qquad\quad- \big[ \psi_n^*(\mu_{\cdot}+x_2 )(\tau) - \psi_n(\mu_{\cdot}+x_2)(\tau) \big] \big|,
	\end{align*}
	so that for (\ref{eq:developboot}) it suffices to show the almost sure convergence $\Ens{\|\sqrt{n}(F_n^*-F_n)\|_{\mathcal{F}_{\rho_n}}}\to 0$ for the class \begin{align*}
		\mathcal{F}_{\rho_n} =\big\{y\mapsto I_{\tau}\big(\mu_{\tau}+x_1,y\big)-I_{\tau}\big(\mu_{\tau}+x_2,y\big) \,\big| \, |x_1|,|x_2|\leq \rho_n,\, \tau\in[\tau_l,\tau_u] \big\},~~ \rho_n=\delta_n+\eta_n.
	\end{align*}	
We use Corollary~19.35, \citet{vdv2000}, which implies that
 \begin{align*}
		\Ens{\|\sqrt{n}(F_n^*-F_n)\|_{\mathcal{F}_{\rho_n}}} \leq J_{[\,]}\big[\En{m_n(Y)^2},\mathcal{F}_{\rho_n},L_2(F_n)\big]
	\end{align*}
	almost surely, where $m_n(y)$ is an envelope function for $\mathcal{F}_{\rho_n}$. The proof consists of finding this envelope and determining the order of the bracketing integral. 	
	By using Lemma~\ref{lem:lipschitz} the class $\mathcal{F}_{\rho_n}$ consists of Lipschitz-functions, as for any $\tau, \tau'\in[\tau_l,\tau_u]$, $x_1,x_1',x_2,x_2'\in [-\rho_n,\rho_n]$ the almost sure inequality \begin{align}\label{eq:l_cont_boot}
	\big|&I_{\tau}(\mu_{\tau}+x_1,y)-I_{\tau}(\mu_{\tau}+x_2,y)-\big(I_{\tau'}(\mu_{\tau'}+x_1',y)-I_{\tau'}(\mu_{\tau'}+x_2',y)\big)\big|\notag\\
	%		&\leq \big|I_{\tau}(\mu_{\tau}+x_1,y) -\big(I_{\tau'}(\mu_{\tau'}+x_1',y) \big| + \big|I_{\tau}(\mu_{\tau}+x_2,y)-I_{\tau'}(\mu_{\tau'}+x_2',y)\big)\big|\notag\\
	%		&\leq 2\big|\mu_{\tau}-\mu_{\tau'}\big|+|x_1-x_1'|+|x_2-x_2'|+|\tau-\tau'|\big(2\,|\mu_{\tau_l}|\vee |\mu_{\tau_u}|+|x_1'|+|x_2'|+2\,|y|\big)\notag\\
	%		&\leq |x_1-x_1'|+|x_2-x_2'| + |\tau-\tau'|\,\Big(2\,\frac{|\mu_{\tau_l}|\vee |\mu_{\tau_u}|+E[|Y|]}{a}+2\,|\mu_{\tau_l}|\vee |\mu_{\tau_u}|+|x_1'|+|x_2'|+2\,|y|\Big)\notag\\
	%		&\leq  |x_1-x_1'|+|x_2-x_2'| + 2\,|\tau-\tau'|\,\big(C'+\delta_n+|y|\big)\notag\\
	&\leq \big(|x_1-x_1'|+|x_2-x_2'| + |\tau-\tau'|\big)\,2\,\big(C+\rho_n+|y|\big)
	\end{align}
	is true for some constant $C>0$, see also \eqref{eq:lip_x_plus_mu}. 
%	
%	Thanks to Example~19.7, \citet{vdv2000}, we can find a constant $K$ not depending on $n$, such that \begin{align*}
%			N_{[\,]}\big(\delta, \mathcal{F}_{\delta_n}, L_2(F_n)\big) \leq 16\,K\,E_n\Big[\{(C+|Y|)^2\}\,\frac{\tau_u-\tau_l+4\,\delta_n}{\delta}\Big]^2
%		\end{align*}
%	for every $\delta>0$. The expectation converges to $E\{(C+|y|)^2\}$ for almost every sequence $Y_1,Y_2,\ldots$, such that the bracketing number is at most of order $1/\delta^2$. 
%
Thus  the bracketing integral $J_{[\,]}(\varepsilon_n, \mathcal{F}_{\scriptscriptstyle \rho_n}, L_2(F_n))$ converges to $0$ almost surely for any sequence $\varepsilon_n\to 0$ with the same arguments as above. 
Finally, by \eqref{eq:l_cont_boot} the function $m_n(y)=\eta_n\,4\,(C+|y|)$ is an envelope for $\mathcal{F}_{\rho_n}$. By the strong law of large numbers, the square integrability of $Y$ and since $\eta_n \to 0$ almost surely it holds that $\En{m_n(Y)^2}\to 0$ almost surely, so that  \begin{align*}
		\Ens{\|\sqrt{n}(F_n^*-F_n)\|_{\mathcal{F}_{\rho_n}}}\leq J_{[\,]}\big[\En{m_n(Y)^2},\mathcal{F}_{\rho_n},L_2(F_n)\big]\to 0
	\end{align*}
	is valid for almost every sequence $Y_1,Y_2,\ldots$, which concludes the proof.

\medskip

{\sl Proof of (\ref{eq:conv_psi_n_star}).}\quad By \eqref{eq:boot_consist} and \eqref{eq:developboot}, and since $\psi_n(\hat{\mu}_{\cdot,n}) = \psi_n^*(\mu_{\cdot,n}^*) = 0$, we have that almost surely, 
\begin{align*}
		\sqrt{n}\big(\psi_n(\mu_{\cdot,n}^*)-\psi_n(\hat{\mu}_{\cdot,n})\big) &= \sqrt{n}\big(\psi_n(\mu_{\cdot,n}^*)-\psi_n^*(\mu_{\cdot,n}^*)\big)\\
		&= - \sqrt{n}\big( \psi_n^*(\hat{\mu}_{\cdot,n})-\psi_n(\hat{\mu}_{\cdot,n})\big) + o_{\prob_n^*}(1)
	\end{align*}
The right hand side converges to $-Z$ conditionally in distribution, almost surely, by \eqref{eq:condconv}, which equals $Z$ in distribution.  
\end{proof}

\begin{lemma}\label{thm:asym_semi_hadam}

The map $\psi_n$ is invertible. Further,  	
 if $t_n\to 0$, $\varphi\in\mathcal{C}[\tau_l,\tau_u]$ and $\varphi_n\to\varphi$ with respect to $d_{hypi}$ and hence uniformly, we have that almost surely, conditionally on $Y_1, Y_2, \ldots $, 
\begin{equation}\label{eq:asympt_semi_hadam}
		t_n^{-1}\big(\psi_n^{\Inv}(t_n\varphi_n)-\psi_n^{\Inv}(0)\big) \to \dot{\psi}^{\Inv}(\varphi)
	\end{equation}
	with respect to the hypi-semimetric. 
\end{lemma}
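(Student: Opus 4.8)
The plan is to mirror the structure of the proof of Lemma~\ref{thm:semi_hadamard_diff}, replacing the population distribution function $F$ throughout by the empirical $F_n$ and keeping track of the almost-sure statements. First I would record the empirical analogue of Lemma~\ref{lem:rep_dif_I}: since the right- and left-sided derivatives of $x\mapsto I_\tau(x,F_n)$ are $-(\tau+(1-2\tau)F_n(x))$ and $-(\tau+(1-2\tau)F_n(x-))$ and are bounded, the same argument via bounded variation gives, for $x_1,x_2\in\R$,
\begin{equation*}
	I_{\tau}(x_1,F_n) -I_{\tau}(x_2,F_n) = (x_2-x_1)\,\Big[ \tau + (1-2\tau)\int_{0}^{1}F_n\big(x_2+s(x_1-x_2)\big)\,\dif s\Big].
\end{equation*}
The bounds in Lemma~\ref{lem:c_bound} apply verbatim since $F_n$ takes values in $[0,1]$, so invertibility of $\psi_n$ and boundedness of $\psi_n^{\Inv}(\varphi)$ follow exactly as in Lemma~\ref{lem:invertable}, and the empirical analogue of Lemma~\ref{lem:rep_diff_psiinv} yields the representation
\begin{equation*}
	t_n^{-1}\big(\psi_n^{\Inv}(t_n\varphi_n)-\psi_n^{\Inv}(0)\big)(\tau) = \varphi_n(\tau)\,\big(c^{(n)}_{\kappa_n}(\tau)\big)^{-1},\qquad \kappa_n(\tau)= \psi_n^{\Inv}(t_n\varphi_n)(\tau)-\psi_n^{\Inv}(0)(\tau),
\end{equation*}
where $c^{(n)}_{\varphi}(\tau) = \tau+(1-2\tau)\int_0^1 F_n(\hat\mu_{\tau,n}+s\varphi(\tau))\,\dif s$ and where, as in Lemma~\ref{lem:rep_diff_psiinv}, $\|\psi_n^{\Inv}(t_n\varphi_n)(\cdot)-\hat\mu_{\cdot,n}\|\to 0$ because $c^{(n)}_{\varphi}$ is bounded below by $\min\{\tau_l,1-\tau_u\}$ uniformly. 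Here $\psi_n^{\Inv}(0)(\tau)=\hat\mu_{\tau,n}$.

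Next, exactly as in the proof of Lemma~\ref{thm:semi_hadamard_diff}, the desired $\dif_{hypi}$-convergence in \eqref{eq:asympt_semi_hadam} reduces — via the product/quotient continuity properties (Lemma~A.4 and the quoted analogue of Lemma~\ref{lem:diverses}~i), iii) in \citet{buecher2014}) and the fact that $\varphi_n\to\varphi$ uniformly with $\varphi$ continuous — to showing that
\begin{equation*}
	h_n(\tau) = \int_{0}^{1}F_n\big(\hat\mu_{\tau,n}+s(\psi_n^{\Inv}(t_n\varphi_n)(\tau)-\hat\mu_{\tau,n})\big)\,\dif s \to F(\mu_\tau)
\end{equation*}
with respect to $\dif_{hypi}$, for almost every sequence $Y_1,Y_2,\ldots$. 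The target function $h(\tau)=F(\mu_\tau)$ is the same as before, with lower-semicontinuous hull $F(\mu_\cdot-)$ and upper-semicontinuous hull $F(\mu_\cdot)$, so by Corollary~A.7 in \citet{buecher2014} it suffices to verify, for every $\tau\in[\tau_l,\tau_u]$ and $\tau_n\to\tau$, the two bounds $\liminf_n h_n(\tau_n)\geq F(\mu_\tau-)$ and $\limsup_n h_n(\tau_n)\leq F(\mu_\tau)$.

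The key new ingredient, and the main obstacle, is replacing the two uses of ``$F$ evaluated at a point that tends to $\mu_\tau$'' by statements about $F_n$. Concretely I would show that, almost surely, for every $\tau\in[\tau_l,\tau_u]$, every $\tau_n\to\tau$, and every $s\in[0,1]$,
\begin{equation*}
	\liminf_n F_n\big(\hat\mu_{\tau_n,n}+s(\psi_n^{\Inv}(t_n\varphi_n)(\tau_n)-\hat\mu_{\tau_n,n})\big)\geq F(\mu_\tau-),\qquad
	\limsup_n F_n\big(\cdots\big)\leq F(\mu_\tau).
\end{equation*}
Write $z_n=\hat\mu_{\tau_n,n}+s(\psi_n^{\Inv}(t_n\varphi_n)(\tau_n)-\hat\mu_{\tau_n,n})$; by the uniform strong consistency of the empirical expectile (\citet{holzmann2016}, Theorem~2) and the uniform vanishing of $\psi_n^{\Inv}(t_n\varphi_n)(\cdot)-\hat\mu_{\cdot,n}$ established above, $z_n\to\mu_\tau$ almost surely, and by continuity of $\tau\mapsto\mu_\tau$ one may further bound: for any $\epsilon>0$, eventually $z_n\in(\mu_\tau-\epsilon,\mu_\tau+\epsilon)$, so $F_n(\mu_\tau-\epsilon)\leq F_n(z_n)\leq F_n((\mu_\tau+\epsilon)-)\leq F_n(\mu_\tau+\epsilon)$. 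Now the classical Glivenko--Cantelli theorem gives $F_n(\mu_\tau-\epsilon)\to F(\mu_\tau-\epsilon)$ and $F_n(\mu_\tau+\epsilon)\to F(\mu_\tau+\epsilon)$ almost surely at the fixed continuity-or-not points $\mu_\tau\pm\epsilon$ — one must take a countable dense set of such $\epsilon$ and a countable dense set of $\tau$ to make the ``almost surely'' uniform, then monotonicity and letting $\epsilon\downarrow0$ along that set yields $\liminf_n F_n(z_n)\geq F((\mu_\tau)-)$ and $\limsup_n F_n(z_n)\leq F(\mu_\tau)$, using right-continuity of $F$ for the upper bound and the definition of the left limit for the lower bound. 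Integrating these over $s\in[0,1]$ via Fatou's lemma (as in the proof of Lemma~\ref{thm:semi_hadamard_diff}) gives the two required inequalities for $h_n$, and invoking Corollary~A.7 in \citet{buecher2014} completes the argument. The delicate point throughout is arranging a single null set outside which all these countably many Glivenko--Cantelli and consistency statements hold simultaneously; once that is done the rest is a direct transcription of the population proof.
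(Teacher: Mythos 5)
Your proposal is correct and follows essentially the same route as the paper: reduce via the empirical representation to the hypi-convergence of $h_n(\tau)=\int_0^1 F_n\big(\hat\mu_{\tau,n}+s(\psi_n^{\Inv}(t_n\varphi_n)(\tau)-\hat\mu_{\tau,n})\big)\,\dif s$ to $F(\mu_\cdot)$, then verify the two pointwise $\liminf$/$\limsup$ bounds needed for Corollary~A.7 of \citet{buecher2014} using consistency and Glivenko--Cantelli. The only difference is cosmetic: where you sandwich $F_n(z_n)$ between $F_n(\mu_\tau\pm\epsilon)$ and worry about assembling a single null set over countably many $\epsilon$ and $\tau$, the paper simply adds and subtracts $F(z_n)$ and uses the uniform bound $\Vert F_n-F\Vert_{\R}\to 0$ almost surely, which disposes of that bookkeeping in one stroke.
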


% version of Theorem~20.8, \citet{vdv2000}. There we only replace $g_n(h)$ with $g_n(\nu)=t_n^{-1}\big(\psi_n^{\Inv}(t_n\,\nu)-\psi_n^{\Inv}(0)\big)$. Then $g_n(\varphi_n)$ will converge to $\dot{\psi}(\varphi)$ with respect to the hypi-semimetric and the extended continuous mapping theorem together with (\ref{eq:conv_psi_n_star}) yields \begin{equation*}
%	\sqrt{n}\big(\mu_{\cdot,n}^*-\hat{\mu}_{\cdot,n}\big) =\sqrt{n}\big(\psi_n^{\Inv}\big(\psi_n(\mu_{\cdot,n}^*)\big)-\psi_n^{\Inv}}\big(\psi_n(\hat{\mu}_{\cdot,n})\big)\big) \to \dot{\psi}^{\Inv}(-Z)
%\end{equation*}
%conditional in distribution with respect to the hypi-semimetric. 
%

\begin{proof}

The first part follows from Lemma \ref{lem:invertable} with $F$ in $\psi_0$ replaced by $F_n$ in $\psi_n$ as no specific assumptions on $F$ were used in that lemma.
 For (\ref{eq:asympt_semi_hadam}), with the same calculations as for Lemma~\ref{lem:rep_diff_psiinv} we obtain the representation \begin{align*}
		t_n^{-1}&\big(\psi_n^{\Inv}(t_n\varphi_n)-\psi_n^{\Inv}(0)\big)(\tau) \\
		&= \varphi_n(\tau)\Bigg\{\tau+(1-2\,\tau)\,\int_{0}^{1}F_n\big(\hat{\mu}_{\tau,n}+s\big(\psi_n^{\Inv}(t_n\,\varphi_n)(\tau)-\hat{\mu}_{\tau,n}\big)\big)\,\dif s\Bigg\}^{\Inv},
	\end{align*}
and we have to prove hypi convergence to $\dot{\psi}^{\Inv}(\varphi)$. 
By the same reductions as in the proof of Theorem~\ref{thm:semi_hadamard_diff}, it suffices to prove the hypi-convergence of \begin{equation*}
		h_n(\tau)=\int_{0}^{1}F_n\big(\hat{\mu}_{\tau,n}+s\big(\psi_n^{\Inv}(t_n\,\varphi_n)(\tau)-\hat{\mu}_{\tau,n}\big)\big)\,\dif s
	\end{equation*}
	to $F(\mu_{\tau})$ for almost every sequence $Y_1,Y_2,\ldots$. 	
	To this end, observe that for any $s\in[0,1]$ the sequence $\hat{\mu}_{\tau,n}+s\big(\psi_n^{\Inv}(t_n\,\varphi_n)(\tau)-\hat{\mu}_{\tau,n}\big)$ converges to $\mu_{\tau}$ almost surely by the same arguments as in Lemma~\ref{lem:rep_diff_psiinv}. Since $\mu_{\tau}$ is continuous in $\tau$, the almost sure convergence $\hat{\mu}_{\tau_n,n}+s\big(\psi_n^{\Inv}(t_n\,\varphi_n)(\tau_n)-\hat{\mu}_{\tau_n,n}\big)\to \mu_{\tau}$ holds for any sequence $\tau_n\to\tau$. By adding and subtracting $F_n(\hat{\mu}_{\tau,n}+s(\psi_n^{\Inv}(t_n\,\varphi_n)(\tau)-\hat{\mu}_{\tau,n}))$  and using Lemma~\ref{lem:rel_sup_inf}, we now can estimate \begin{align*}
		F(\mu_{\tau}-)&\leq \int_{0}^{1} \liminf_{n} \Bigg(F_n\big(\hat{\mu}_{\tau,n}+s\big(\psi_n^{\Inv}(t_n\,\varphi_n)(\tau)-\hat{\mu}_{\tau,n}\big)\big) \Bigg) \\
		+ \limsup_{n} & \Bigg(F\big(\hat{\mu}_{\tau,n}+s\big(\psi_n^{\Inv}(t_n\,\varphi_n)(\tau)-\hat{\mu}_{\tau,n}\big)\big) -F_n\big(\hat{\mu}_{\tau,n}+s\big(\psi_n^{\Inv}(t_n\,\varphi_n)(\tau)-\hat{\mu}_{\tau,n}\big)\big)  \Bigg)  \dif s\\
		&\leq \int_{0}^{1}\liminf_{n} F_n\big(\hat{\mu}_{\tau,n}+s\big(\psi_n^{\Inv}(t_n\,\varphi_n)(\tau)-\hat{\mu}_{\tau,n}\big)\big) \,\dif s + \limsup_{n} \| F_n-F\|_{\R}\\
		&\leq \liminf_{n} \int_{0}^{1} F_n\big(\hat{\mu}_{\tau,n}+s\big(\psi_n^{\Inv}(t_n\,\varphi_n)(\tau)-\hat{\mu}_{\tau,n}\big)\big) \,\dif s = \liminf_{n} h_n(\tau_n)
	\end{align*}
	almost surely, where the '$\limsup$' vanishes due to the Glivenko-Cantelli-Theorem for the empirical distribution function. Similarly we almost surely have \begin{align*}
		F(\mu_{\tau}) &\geq \int_{0}^{1}\limsup_{n} F_n\big(\hat{\mu}_{\tau,n}+s\big(\psi_n^{\Inv}(t_n\,\varphi_n)(\tau)-\hat{\mu}_{\tau,n}\big)\big) \,\dif s + \liminf_{n} \| F_n-F\|_{\R}\\
		&\geq \limsup_{n} \int_{0}^{1} F_n\big(\hat{\mu}_{\tau,n}+s\big(\psi_n^{\Inv}(t_n\,\varphi_n)(\tau)-\hat{\mu}_{\tau,n}\big)\big) \,\dif s = \limsup_{n} h_n(\tau_n).
	\end{align*}
The proof is concluded as that of Theorem~\ref{thm:semi_hadamard_diff} by using Corollary~A.7, \citet{buecher2014} .
\end{proof}

\begin{proof}[Proof of Theorem~\ref{thm:main_thm_boot}.]
	
	Set $t_n=1/\sqrt{n}$ and define the function $g_n(\varphi)= t_n^{-1}\,(\psi_n^{\Inv}(t_n\,\varphi)-\psi_n^{\Inv}(0))$. Then from (\ref{eq:asympt_semi_hadam}) the hypi-convergence $g_n(\varphi_n)\to \dot{\psi}^{\Inv}(\varphi)$ holds almost surely, whenever $\varphi\in\mathcal{C}[\tau_l,\tau_u]$ and $\varphi_n\to\varphi$ with respect to $d_{hypi}$. In addition $\sqrt{n}\big(\psi_n(\mu_{\cdot, n}^*)-\psi_n(\hat{\mu}_{\cdot,n})\big)\to Z$ conditional in distribution with respect to the sup-norm, almost surely, by \eqref{eq:condconv}, where $Z$ is continuous almost surely. Hence the convergence is also valid with respect to $d_{hypi}$, such that \begin{equation*}
		\sqrt{n}\big( \mu_{\cdot, n}^*-\hat{\mu}_{\cdot,n}\big) = g_n\big(\sqrt{n}\big(\psi_n(\mu_{\cdot, n}^*)-\psi_n(\hat{\mu}_{\cdot,n})\big)\big) \to \dot{\psi}^{\Inv}(Z)
	\end{equation*}
	holds conditionally in distribution, almost surely, by using the extended continuous mapping theorem, Theorem~B.3, in \citet{buecher2014}. 
\end{proof}

\subsection{Details for the proof of Theorem \ref{thm:conv_quantile}}

Since $F$ in Theorem \ref{thm:conv_quantile} is assumed to be continuous and strictly increasing, it is differentiable almost everywhere. If $F$ is not differentiable in a point $x$, the assumptions at least guarantee right- and left-sided limits of $f$ in $x$. Since redefining a density on a set of measure zero is possible without changing the density property, we can assume $f(x)=f(x-)$ for such $x$. 
In addition, the assumptions imply that $F$ is invertible with continuous inverse, which we denote by $F^{\Inv}$.

\begin{proof}[Proof of Lemma~\ref{lem:uniform_semi_hadam}.]
	Let $t_n\searrow 0$, $\alpha_n\to\alpha\in[\alpha_l,\alpha_u]$ and $\varphi_n, \nu_n\in\ell^{\infty}[\alpha_l,\alpha_u]$ with $\varphi_n\to\varphi\in\mathcal{C}[\alpha_l,\alpha_u]$ and $\nu_n\to 0$ with respect to $\dif_{hypi}$. Then $\varphi_n(\alpha_n)\to\varphi(\alpha)$ and $\nu_n(\alpha_n)\to 0$ by Proposition~2.1, \citet{buecher2014}. 
	We have to deal with the limes inferior and superior of \begin{equation*}
		t_n^{-1}\big(F^{\Inv}(\alpha_n+\nu_n(\alpha_n)+t_n\,\varphi_n(\alpha_n))-F^{\Inv}(\alpha_n+\nu_n(\alpha_{n}))\big).
	\end{equation*}
	Define the function $g:[0,1]\to\R$ with $g(s) = F^{\Inv}(\alpha_n+\nu_n(\alpha_n)+s\,t_n\,\varphi_n(\alpha_n))$. The difference above then equals $t_n^{-1}(g(1)-g(0))$. 
	The function $g$ is monotonically increasing (or decreasing, depending on the sign of $\varphi_n(\alpha_n)$) and continuous in $s$. Hence with Theorem~7.23, \citet{bruckner2008}, we can write \begin{align}
%		&t_n^{-1}\big(F^{\Inv}(\alpha_n+\nu_n(\alpha_n)+t_n\,\varphi_n(\alpha_n))-F^{\Inv}(\alpha_n+\nu_n(\alpha_{n}))\big) = \\
		t_n^{-1}\big(g(1)-g(0)\big) &=\int_{0}^{1} g'(s)\,\dif s +\mu_g\big(\{g'=\pm\infty\}\big)\notag\\
		&= \varphi_n(\alpha_n)\,\int_{0}^{1} \Big[f\big\{F^{\Inv}\big(\alpha_n+\nu_n(\alpha_n)+s\,t_n\,\varphi_n(\alpha_n)\big)\big\}\Big]^{-1}\,\dif s,\label{eq:diff_quot}
	\end{align}
	where $\mu_g$ is the Lebesgue-Stieltjes signed measure associated with $g$. The set $\{g'=\pm\infty\}$ is empty, as $g'=\infty$ if and only if $f=0$, which is excluded by assumption, so that $\lambda_g\big(\{g'=\pm\infty\}\big)=0$. Since $\varphi_n\to \varphi \in\mathcal{C}[\alpha_l,\alpha_u]$, using Lemma~\ref{lem:diverses} i) we only need to deal with the accumulation points of the integral in \eqref{eq:diff_quot}. To this end let
	\begin{equation*}
		H_n(\alpha) = \int_{0}^{1} \Big(f\big\{F^{\Inv}\big(\alpha_n+\nu_n(\alpha_n)+s\,t_n\,\varphi_n(\alpha_n)\big)\big\}\Big)^{-1}\,\dif s.
	\end{equation*}
	The assertion is that $H_n(\alpha)$ hypi-converges to $f(q_{\alpha})^{-1}$. 
	In the case for the expectile-process we were able to exploit continuity properties of the asserted semi-derivative  and thus utilize Corollary~A.7, \citet{buecher2014}). In the present context we do not know whether the limit $f(q_{\cdot})^{-1}$ can be attained by extending $f(q_{\cdot})^{-1}$ from the dense set where it is continuous. Thus, we directly show the defining properties of hypi convergence in (\ref{eq:epi_pointwise}) and (\ref{eq:hypo_pointwise}).  
	For any $s\in [0,1]$ it holds that $F^{\Inv}\big(\alpha_n+\nu_n(\alpha_n)+s\,t_n\,\varphi_n(\alpha_n)\big)\to q_{\alpha}$ by continuity of $F^{\Inv}$, hence we can estimate \begin{align*}
		\liminf_{n} &\int_{0}^{1}\Big(f\big\{F^{\Inv}\big(\alpha_n+\nu_n(\alpha_n)+s\,t_n\,\varphi_n(\alpha_n)\big)\big\}\Big)^{-1}\,\dif s\\
		\geq &\int_{0}^{1} \Big(\limsup_{n} f\big\{F^{\Inv}\big(\alpha_n+\nu_n(\alpha_n)+s\,t_n\,\varphi_n(\alpha_n)\big)\big\}\Big)^{-1}\,\dif s
		\geq \frac{1}{f_{\vee}(q_{\alpha})}.
	\end{align*}
	with help of Fatou's Lemma, Lemma~\ref{lem:rel_sup_inf} and the properties of the hulls. Similarly, 
	\begin{align*}
		\limsup_{n} &\int_{0}^{1}\Big(f\big\{F^{\Inv}\big(\alpha_n+\nu_n(\alpha_n)+s\,t_n\,\varphi_n(\alpha_n)\big)\big\}\Big)^{-1}\,\dif s\\
		\leq &\int_{0}^{1} \Big(\liminf_{n} f\big\{F^{\Inv}\big(\alpha_n+\nu_n(\alpha_n)+s\,t_n\,\varphi_n(\alpha_n)\big)\big\}\Big)^{-1}\,\dif s \leq \frac{1}{f_{\wedge}(q_{\alpha})}.
	\end{align*}
It remains to construct sequences $\alpha_n$ such that $\lim_{n} H_n(\alpha_n)$ equals the respective hull. 	
	By Lemma~\ref{lem:diverses} ii), it holds that 
	\begin{align*}
		f_{\wedge}(q_{\alpha}) = \min\big\{f(q_{\alpha}-),f(q_{\alpha}+),f(q_{\alpha}) \big\}\quad\text{and}\quad f(q_{\alpha}) = \max\big\{f(q_{\alpha}-),f(q_{\alpha}+),f(q_{\alpha}) \big\}.
	\end{align*}
	If $F$ is differentiable in $q_{\alpha}$, all three values can differ; if not, we have set $f(q_{\alpha})=f(q_{\alpha}-)$. 	
We shall choose a sequence converging from below, above or which is always equal to $\alpha$, depending on where the $\min/\max$ is attained. 	
	We do this exemplary for $f_{\vee}(q_{\alpha}) =f(q_{\alpha}-)$. Since $\varphi_n\to\varphi$ uniformly and $\varphi$ is bounded, there is a $C>0$ for which $\Vert\varphi_n\Vert\leq C$. Then choose any sequence $\varepsilon_n\searrow0$ and set $\alpha_n = \alpha-\|\nu_n\|-t_n\,C-\varepsilon_n$, which is in $[\alpha_l,\alpha_u]$ for $n$ big enough. For this sequence it holds that $\alpha_n +\nu_n(\alpha_n)+ s\,t_n\,\varphi_n(\alpha_n) < \alpha$ for any $s\in[0,1]$, so that the convergence is from below. Bounded convergence yields 
	\begin{align*}
		& \lim_n \int_{0}^{1}\Big(f\big\{F^{\Inv}\big(\alpha_n+\nu_n(\alpha_n)+s\,t_n\,\varphi_n(\alpha_n)\big)\big\}\Big)^{-1}\,\dif s\\
		= & \int_{0}^{1}\Big(\lim_nf\big\{F^{\Inv}\big(\alpha_n+\nu_n(\alpha_n)+s\,t_n\,\varphi_n(\alpha_n)\big)\big\}\Big)^{-1}\,\dif s = \frac{1}{f(q_{\alpha}-)}.
	\end{align*}
\end{proof}

\subsection{Properties of hypi-convergence}

An major technical issue in the above argument is to determine hypi-convergence of sums, products and quotients of hypi-convergent functions. We shall require the following basic relations between $\limsup$ and $\liminf$. 

\begin{lemma}\label{lem:rel_sup_inf}
	
	Let $(a_n)_n$, $(b_n)_n $ be bounded sequences. Then 
	\begin{align*}
		\liminf_{n\to\infty} a_n + \limsup_{n\to\infty}b_n \geq \liminf_{n\to\infty} (a_n+b_n) \geq \liminf_{n\to\infty} a_n+\liminf_{n\to\infty}b_n
	\end{align*}
	and \begin{align*}
		\limsup_{n\to\infty} a_n + \liminf_{n\to\infty}b_n\leq \limsup_{n\to\infty} (a_n+b_n) \leq \limsup_{n\to\infty} a_n+\limsup_{n\to\infty}b_n. 
	\end{align*}
	If $a_n \ne 0$, then 
	\begin{align*}
		\liminf_{n} \frac{1}{a_n} = \frac{1}{\limsup_{n} a_n}.
	\end{align*}
	
%	Let $(a_n)_n$, $(b_n)_n$ positive, bounded sequences. Then it holds that \begin{align*}
%		\liminf_{n\to\infty} a_n\,b_n &\geq \liminf_{n\to\infty} a_n\,\liminf_{n\to\infty}b_n,\\
%		\limsup_{n\to\infty} a_n\,b_n &\leq \limsup_{n\to\infty} a_n\,\limsup_{n\to\infty}b_n.
%	\end{align*}
%	
%	If $(a_n)_n$ is negative and bounded, $(b_n)_n$ positive and bounded, then \begin{align*}
%	\liminf_{n\to\infty} a_n\,b_n &\geq \liminf_{n\to\infty} a_n\,\limsup_{n\to\infty}b_n,\\
%	\limsup_{n\to\infty} a_n\,b_n &\leq \limsup_{n\to\infty} a_n\,\liminf_{n\to\infty}b_n.
%	\end{align*}
\end{lemma}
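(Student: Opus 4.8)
The plan is to reduce all three assertions to a single elementary inequality, namely that for any bounded real sequences $(c_n),(d_n)$ and any $n$ one has $\inf_{k\ge n}(c_k+d_k)\ge \inf_{k\ge n}c_k+\inf_{k\ge n}d_k$, together with the sign identity $\liminf_n(-c_n)=-\limsup_n c_n$. No new ideas are needed; the work is just careful bookkeeping.

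First I would establish the rightmost inequality of the first chain: applying the elementary inequality with $c_k=a_k$, $d_k=b_k$ and letting $n\to\infty$ gives $\liminf_n(a_n+b_n)\ge \liminf_n a_n+\liminf_n b_n$, since $\inf_{k\ge n}a_k+\inf_{k\ge n}b_k$ increases to $\liminf_n a_n+\liminf_n b_n$. Next I would obtain the leftmost inequality of the first chain by the \emph{add-and-subtract} reduction: apply the inequality just proved to the pair $(a_n+b_n,\,-b_n)$, which yields $\liminf_n a_n\ge\liminf_n(a_n+b_n)+\liminf_n(-b_n)=\liminf_n(a_n+b_n)-\limsup_n b_n$; rearranging gives $\liminf_n a_n+\limsup_n b_n\ge\liminf_n(a_n+b_n)$. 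The second chain, concerning $\limsup$, then follows from the first by substituting $(-a_n,-b_n)$ for $(a_n,b_n)$ and translating back via $\limsup_n c_n=-\liminf_n(-c_n)$.

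For the reciprocal identity I would note that in every place it is invoked in this paper the sequence $a_n$ is positive (indeed bounded away from $0$), so I would prove it under the hypothesis $a_n>0$. Since $x\mapsto 1/x$ is strictly decreasing on $(0,\infty)$, one has $\inf_{k\ge n}1/a_k=1/\sup_{k\ge n}a_k$ for every $n$; letting $n\to\infty$, the left-hand side increases to $\liminf_n 1/a_n$ while $\sup_{k\ge n}a_k$ decreases to $\limsup_n a_n$, so by continuity of $x\mapsto 1/x$ (with the convention $1/\infty=0$ in the degenerate case $\limsup_n a_n=0$) the right-hand side increases to $1/\limsup_n a_n$, which is the claim. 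Positivity of $a_n$ is genuinely needed here: for $a_n=(-1)^n$ the identity fails, since $\liminf_n 1/a_n=-1\ne 1=1/\limsup_n a_n$.

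The only step requiring any care is the pair of mixed inequalities linking $\liminf$ and $\limsup$, where I would avoid a direct $\varepsilon$-argument in favour of the add-and-subtract reduction above, keeping close track of the sign convention $\liminf_n(-c_n)=-\limsup_n c_n$; everything else is a straightforward passage to the limit in monotone quantities, and I do not anticipate any real obstacle.
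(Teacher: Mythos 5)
Your proposal is correct, and its core is the same elementary inf/sup bookkeeping as the paper's proof; the only difference in route is in the mixed inequalities. The paper gets both halves of each chain at once from the sandwich $a_k+\sup_{l\ge n}b_l\ \ge\ a_k+b_k\ \ge\ a_k+\inf_{l\ge n}b_l$, applying $\inf_{k\ge n}$ and letting $n\to\infty$, whereas you first prove the superadditivity $\liminf_n(a_n+b_n)\ge\liminf_n a_n+\liminf_n b_n$ and then obtain the mixed bound by the add-and-subtract reduction applied to $(a_n+b_n,-b_n)$, deriving the $\limsup$ chain by negation; both are equally elementary and neither buys anything substantial over the other. One genuinely useful point in your write-up: you note that the reciprocal identity fails under the stated hypothesis $a_n\neq 0$ alone (e.g.\ $a_n=(-1)^n$) and prove it for $a_n>0$ via monotonicity of $x\mapsto 1/x$ on $(0,\infty)$, i.e.\ $\inf_{k\ge n}1/a_k=1/\sup_{k\ge n}a_k$. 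The paper's proof uses exactly this identity and thus tacitly assumes positivity as well; since every invocation in the paper (Lemma~\ref{lem:diverses}~iii) with $c_n,c>0$, and the quantile case where $f$ is bounded away from zero) involves positive sequences bounded away from $0$, nothing downstream is affected, but your restriction to $a_n>0$ is the correct way to state it. Your aside about the convention $1/\infty=0$ when $\limsup_n a_n=0$ is slightly garbled (in that degenerate case both sides are $+\infty$), but this case never arises in the applications and does not affect the argument.
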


\begin{proof}
	
	The first pair of inequalities follows from
	\begin{align*}
		a_k + \sup_{l\geq n} b_l \geq a_k + b_k \geq a_k + \inf_{l\geq n} b_l
	\end{align*} 
	for any $n\in\N$, $k\geq n$, by applying '$\inf_{k\geq n}$' and taking the limit $n\to\infty$. 
		The second pair of inequalities follows similarly.  
	For the last part note that \begin{equation*}
		\inf_{l\geq n} \frac{1}{a_l} = \frac{1}{\sup_{l\geq n} a_l}. 
	\end{equation*}
Taking $n\to\infty$ yields the asserted equality.
	%
%	Now assume $a_n ,b_n\geq 0$. Then for any $n\in\N$, $k\geq n$ it holds that \begin{equation*}
%		a_k \, b_k \geq a_k \,\inf_{k\geq n} b_k \quad \text{ and } \quad a_k \, b_k \leq a_k \sup_{k\geq n} b_k.
%	\end{equation*}
%	Taking the infimum and the supremum, respectively, over all $k\geq n$ yields \begin{equation*}
%		\inf_{k\geq n} a_k \, b_k \geq \inf_{k\geq n}\{a_k \,\inf_{k\geq n} b_k\} =  \inf_{k\geq n}a_k \,\inf_{k\geq n} b_k
%	\end{equation*}
%	and \begin{equation*}
%		\sup_{k\geq n} a_k \, b_k \leq \sup_{k\geq n}\{a_k \sup_{k\geq n} b_k\} =  \sup_{k\geq n}a_k \sup_{k\geq n} b_k,
%	\end{equation*}
%	since $\inf_{k\geq n} b_k$ and $\sup_{k\geq n} b_k$ are positive. Taking the limit over $n$ shows the assertion. 
%	
%	The second pair of assertions follows by inserting $-a_n$ in the inequalities shown so far and using $\liminf(-a_n) = - \limsup(a_n)$. 
\end{proof}

\begin{lemma}\label{lem:acc_points}
	Let $(b_n)_n$ be a bounded sequence and let $(a_n)_n$ be convergent with limit $a \in \R$. Then 
	\begin{align*}
		\liminf_n a_n\,b_n &= \liminf_n a\,b_n, \qquad 
		\limsup_n a_n\,b_n = \limsup_n a\,b_n.
	\end{align*}
\end{lemma}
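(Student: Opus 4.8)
The plan is to reduce the statement to the additive stability of $\liminf$ and $\limsup$ under perturbation by a null sequence, a fact already contained in Lemma~\ref{lem:rel_sup_inf}. First I would record the decomposition
\[
a_n\, b_n \;=\; a\, b_n \;+\; (a_n - a)\, b_n ,
\]
and observe that the remainder is a null sequence: by assumption $a_n - a \to 0$, while $|b_n| \le M$ for some $M < \infty$ because $(b_n)_n$ is bounded, so $|(a_n - a)\, b_n| \le M\,|a_n - a| \to 0$. Note also that $(a\,b_n)_n$ and $((a_n - a)\,b_n)_n$ are both bounded (the latter since the convergent sequence $(a_n)_n$ is bounded), so that all of the $\liminf$/$\limsup$ that appear below are finite and Lemma~\ref{lem:rel_sup_inf} applies verbatim.

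Next I would apply Lemma~\ref{lem:rel_sup_inf} with its two sequences taken to be $(a\,b_n)_n$ and $(y_n)_n := ((a_n - a)\,b_n)_n$. Since $y_n \to 0$ we have $\liminf_n y_n = \limsup_n y_n = 0$, so the first chain of inequalities in that lemma reads
\[
\liminf_n (a\,b_n) \;\ge\; \liminf_n \big(a\,b_n + y_n\big) \;\ge\; \liminf_n (a\,b_n),
\]
and since $a\,b_n + y_n = a_n\,b_n$ this forces $\liminf_n a_n\,b_n = \liminf_n a\,b_n$. The second chain of inequalities in Lemma~\ref{lem:rel_sup_inf}, again with $y_n$ contributing $0$ at both ends, gives $\limsup_n a_n\,b_n = \limsup_n a\,b_n$ in exactly the same way. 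This proves the lemma.

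I do not expect any genuine obstacle here; the argument is entirely elementary. The one point deserving a line of care is that the boundedness hypothesis on $(b_n)_n$ is used in an essential way — it is precisely what turns $(a_n - a)\,b_n$ into a null sequence — and one should make sure the auxiliary sequences are bounded so that Lemma~\ref{lem:rel_sup_inf} is applicable without modification.
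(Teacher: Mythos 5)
Your proof is correct and follows essentially the same route as the paper: both decompose $a_n b_n$ into $a\,b_n$ plus a null-sequence perturbation (null because $(b_n)_n$ is bounded) and then squeeze with the inequality chains of Lemma~\ref{lem:rel_sup_inf}. No gaps.
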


\begin{proof}
From Lemma \ref{lem:rel_sup_inf}, 
\small
\begin{align*}
\liminf_n (a_0\,b_n-a_n\,b_n) + \liminf_n a_0\, b_n & \leq \liminf_n a_n\,b_n  \leq  \limsup_n (a_0\,b_n-a_n\,b_n) + \liminf_n a_0\, b_n,\\
\liminf_n (a_0\,b_n-a_n\,b_n) + \limsup_n a_0\,b_n  & \leq \limsup_n a_n\,b_n
 \leq 		\limsup_n (a_0\,b_n-a_n\,b_n) + \limsup_n a_0\,b_n.
	\end{align*}
\normalsize
But
$		\big|a_0\,b_n-a_n\,b_n\big| \leq \sup_m |b_m|\,\big|a_0-a_n\big|\to 0,
$ which implies the stated result. 
\end{proof}

The next lemma was crucial in the proof of Theorem~\ref{thm:semi_hadamard_diff}.

\begin{lemma}\label{lem:diverses}
	Let $c, c_n, \varphi_n\in\ell^{\infty}\big([l,u]\big)$ and $\varphi\in\mathcal{C}[l,u]$.
	
	\begin{enumerate}[label={\roman*)}]
%		\item For any open set $U\subset [\tau_l,\tau_u]$ it holds that $(c\arrowvert_U)_{\wedge}=(c_{\wedge})\arrowvert_U$ and $(c\arrowvert_U)_{\vee}=(c_{\vee})\arrowvert_U$.
		\item If $d_{hypi}(\varphi_n,\varphi),\, d_{hypi}(c_n,c)\to 0$ holds, then $c_n\,\varphi_n$ hypi-converges to $c\,\varphi$. More precisely $c_n\,\varphi_n$ epi-converges to $(c\,\varphi)_{\wedge}$ and hypo-converges to $(c\,\varphi)_{\vee}$, where
		\begin{align}\label{eq:hull_product}
			\big(\varphi\,c\big)_{\wedge} &= \varphi\,\big(c_{\wedge}\,1_{\varphi>0} + c_{\vee}\,1_{\varphi<0}\big), \qquad 
			\big(\varphi\,c\big)_{\vee} = \varphi\,\big(c_{\vee}\,1_{\varphi>0} + c_{\wedge}\,1_{\varphi<0}\big).
	\end{align}
		\item Assume $c$ has left- and right-sided limits at every point in $[l,u]$. Then \begin{align*}
			c_{\wedge}(x)&=\min\{c(x-),c(x),c(x+)\} \qquad\text{and}\\
			c_{\vee}(x)&=\max\{c(x-),c(x),c(x+)\}
		\end{align*}
		holds. Especially, if $c(x_0-)\leq c(x_0)\leq c(x_0+)$ or $c(x_0-)\geq c(x_0)\geq c(x_0+)$ for some $x_0\in [l,u]$, then the equalities $(c_{\wedge})_{\vee}(x_0)=c_{\vee}(x_0)$ and $(c_{\vee})_{\wedge}(x_0)=c_{\wedge}(x_0)$ are true.
		
		\item If $c_n,\,c>0$, the convergence $d_{hypi}(\nicefrac{1}{c_n},\nicefrac{1}{c})\to 0$ follows from $d_{hypi}(c_n,c)\to 0$. 
	\end{enumerate}
\end{lemma}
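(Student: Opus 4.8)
The plan is to treat each of the three parts by the same two-step recipe: first identify the lower- and upper-semicontinuous hulls of the function under consideration (the product $c\,\varphi$, respectively the reciprocal $1/c$) directly from the definition \eqref{eq:lowerupperhulls}, and then verify hypi-convergence by checking the two defining pairs of pointwise conditions \eqref{eq:epi_pointwise} and \eqref{eq:hypo_pointwise}, using that $d_{hypi}$-convergence of a sequence to a limit is by definition epi-convergence to the lower hull together with hypo-convergence to the upper hull. Throughout I would use that $d_{hypi}(c_n,c)\to 0$ forces $\sup_n\sup_x|c_n(x)|<\infty$ (an escaping spike would violate hypo-convergence to the bounded function $c_\vee$), and, since $\varphi\in\mathcal{C}[l,u]$, that $d_{hypi}(\varphi_n,\varphi)\to 0$ implies $\varphi_n\to\varphi$ uniformly by Proposition~2.1 in \citet{buecher2014}, so that $\varphi_n(t_n)\to\varphi(t)$ whenever $t_n\to t$; in part iii) I would in addition use (as is the case in all applications) that the $c_n$ are bounded away from zero uniformly, which is what makes $1/c_n$ and $1/c$ lie in $\ell^{\infty}$.

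I would begin with ii), since it is purely about the hulls of a single regulated function. The formulas $c_\wedge(x)=\min\{c(x-),c(x),c(x+)\}$ and $c_\vee(x)=\max\{c(x-),c(x),c(x+)\}$ are read off from \eqref{eq:lowerupperhulls}: for $\varepsilon$ small the values $c(x')$ with $x-\varepsilon<x'<x$ lie near $c(x-)$ and those with $x<x'<x+\varepsilon$ near $c(x+)$, while $c(x)$ always belongs to the set, so the monotone limit of $\inf_{|x-x'|<\varepsilon}c(x')$ is the minimum of these three numbers; symmetrically for the supremum. For the statement about iterated hulls, suppose $c(x_0-)\le c(x_0)\le c(x_0+)$, so that $c_\wedge(x_0)=c(x_0-)$ and $c_\vee(x_0)=c(x_0+)$. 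The key point is that the hull $c_\wedge$ again possesses one-sided limits at $x_0$, and these are given by the corresponding one-sided limits of $c$: as $x'\searrow x_0$ each of $c(x'-)$, $c(x')$, $c(x'+)$ tends to $c(x_0+)$, because each is a value or a one-sided limit of $c$ at points squeezed strictly between $x_0$ and $x'$, hence $c_\wedge(x')\to c(x_0+)$; likewise $c_\vee(x')\to c(x_0-)$ as $x'\nearrow x_0$. Therefore $(c_\wedge)_\vee(x_0)\ge\limsup_{x'\searrow x_0}c_\wedge(x')=c(x_0+)=c_\vee(x_0)$, while $(c_\wedge)_\vee\le c_\vee$ by monotonicity of the usc-hull together with $c_\wedge\le c$; this gives $(c_\wedge)_\vee(x_0)=c_\vee(x_0)$, and $(c_\vee)_\wedge(x_0)=c_\wedge(x_0)$ follows in the same way by approaching $x_0$ from the left. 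The case $c(x_0-)\ge c(x_0)\ge c(x_0+)$ is symmetric.

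For i) I would first establish \eqref{eq:hull_product}. Fixing $x$ and writing $c(x')\varphi(x')=c(x')\varphi(x)+c(x')(\varphi(x')-\varphi(x))$, the last term is uniformly $O\big(\sup_{|x-x'|<\varepsilon}|\varphi(x')-\varphi(x)|\big)$ by boundedness of $c$, which vanishes as $\varepsilon\searrow 0$ by continuity of $\varphi$; hence $\inf_{|x-x'|<\varepsilon}c(x')\varphi(x')$ agrees, up to $o(1)$, with $\varphi(x)\inf_{|x-x'|<\varepsilon}c(x')$ when $\varphi(x)>0$ and with $\varphi(x)\sup_{|x-x'|<\varepsilon}c(x')$ when $\varphi(x)<0$, and letting $\varepsilon\searrow0$ yields $(c\varphi)_\wedge(x)=\varphi(x)c_\wedge(x)$, respectively $\varphi(x)c_\vee(x)$; when $\varphi(x)=0$ the product is $O\big(\sup_{|x-x'|<\varepsilon}|\varphi(x')|\big)=o(1)$, so $(c\varphi)_\wedge(x)=0$, consistent with the stated formula, and $(c\varphi)_\vee$ is obtained symmetrically. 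For the convergence, I would check the four pointwise conditions. For the lower-bound part of \eqref{eq:epi_pointwise}, given $t_n\to t$, Lemma~\ref{lem:acc_points} with $a_n=\varphi_n(t_n)\to\varphi(t)$ and $b_n=c_n(t_n)$ gives $\liminf_n c_n(t_n)\varphi_n(t_n)=\liminf_n\varphi(t)c_n(t_n)$, which equals $\varphi(t)\liminf_n c_n(t_n)\ge\varphi(t)c_\wedge(t)$ when $\varphi(t)>0$ (epi-convergence of $c_n$), equals $\varphi(t)\limsup_n c_n(t_n)\ge\varphi(t)c_\vee(t)$ when $\varphi(t)<0$ (hypo-convergence of $c_n$, the inequality flipping because of the negative factor), and equals $0$ when $\varphi(t)=0$; in every case this is $(c\varphi)_\wedge(t)$. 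For the recovery part of \eqref{eq:epi_pointwise}, I would take $t_n\to t$ along which $c_n(t_n)\to c_\wedge(t)$ when $\varphi(t)>0$, along which $c_n(t_n)\to c_\vee(t)$ when $\varphi(t)<0$, and $t_n\equiv t$ when $\varphi(t)=0$, so that $c_n(t_n)\varphi_n(t_n)\to(c\varphi)_\wedge(t)$. The conditions in \eqref{eq:hypo_pointwise} are dual and give hypo-convergence of $c_n\varphi_n$ to $(c\varphi)_\vee$.

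For iii), since $t\mapsto 1/t$ is continuous and strictly decreasing on $(0,\infty)$ it interchanges infima with suprema and commutes with the monotone limits in \eqref{eq:lowerupperhulls}, so $(1/c)_\wedge=1/c_\vee$ and $(1/c)_\vee=1/c_\wedge$, the latter finite since $c$, hence $c_\wedge$, is bounded away from zero. The convergence again follows from the pointwise conditions: for $t_n\to t$, the identity $\liminf_n 1/c_n(t_n)=1/\limsup_n c_n(t_n)$ of Lemma~\ref{lem:rel_sup_inf} together with $\limsup_n c_n(t_n)\le c_\vee(t)$ gives $\liminf_n 1/c_n(t_n)\ge 1/c_\vee(t)=(1/c)_\wedge(t)$, and choosing $t_n\to t$ with $c_n(t_n)\to c_\vee(t)>0$ gives $1/c_n(t_n)\to(1/c)_\wedge(t)$ by continuity of the reciprocal; the hypo-conditions use the dual identity $\limsup_n 1/c_n(t_n)=1/\liminf_n c_n(t_n)$ and $\liminf_n c_n(t_n)\ge c_\wedge(t)>0$. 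I expect the only genuinely delicate steps to be the two hull computations: tracking the sign of $\varphi$, and in particular its behaviour at its zeros, in \eqref{eq:hull_product}, and the observation that $c_\wedge$ inherits one-sided limits from $c$, which is what makes the iterated hulls in ii) collapse; everything else is a routine application of Lemmas~\ref{lem:rel_sup_inf} and \ref{lem:acc_points} and of the pointwise description of hypi-convergence.
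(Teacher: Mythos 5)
Your proposal is correct and follows essentially the same route as the paper: the hulls are computed directly from the definition and the pointwise epi-/hypo-criteria are then verified via Lemmas~\ref{lem:rel_sup_inf} and \ref{lem:acc_points}, exploiting the uniform convergence of $\varphi_n$ to the continuous $\varphi$. The only notable difference is that in part ii) you prove in-line that the hulls of a regulated function are the min/max of the three one-sided values and that these hulls inherit one-sided limits, where the paper simply cites Lemmas C.5 and C.6 of \citet{buecher2014}.
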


\begin{proof}[Proof of Lemma~\ref{lem:diverses}.]
From the definition in (\ref{eq:lowerupperhulls}), for a function $h\in\ell^{\infty}[l,u]$ the lower semi-continuous hull $h_\wedge$ at $x \in [l,u]$ is characterized by the following conditions
\begin{align}\label{eq:lowerhull}
\begin{split}
\text{For any sequence } x_n \to x, \quad \text{we have } & \liminf_{n \to \infty} h(x_n) \geq h_\wedge(x),\\
 \text{There is a sequence } x_n' \to x \quad \text{for which } & \lim_{n \to \infty} h(x_n') = h_\wedge(x), 
\end{split}
\end{align}
and similarly for $h_\vee$.

\medskip

Ad i):\quad By continuity of $\varphi$ we have $ \varphi(x_n) \to \varphi(x)$ for any sequence $x_n \to x$. The statement (\ref{eq:hull_product}) now follows immediately using (\ref{eq:lowerhull}) and Lemma \ref{lem:acc_points} and noting that for $\varphi(x) < 0$, 
\[ \liminf_{n \to \infty} \varphi(x) c(x_n) = \varphi(x)\, \limsup_{n \to \infty}  c(x_n),\quad \limsup_{n \to \infty} \varphi(x) c(x_n) = \varphi(x)\, \liminf_{n \to \infty}  c(x_n).  \] 
Further, by continuity of $\varphi$, the hypi-convergence of $\varphi_n$ to $\varphi$ actually implies the uniform convergence. Therefore, for any $x_n \to x$ we have that $ \varphi_n(x_n) \to \varphi(x)$. Using the pointwise criteria (\ref{eq:epi_pointwise}) and (\ref{eq:hypo_pointwise}) for hypi-convergence, Lemma \ref{lem:acc_points} and (\ref{eq:hull_product}) we obtain the asserted convergence $\varphi_n\, c_n \to \varphi\, c$ with respect to the hypi semi-metric. 

\medskip
	
Ad ii): The proof of Lemma~C.6, \citet{buecher2014} show that for a function, which admits right- and left-sided limits, the supremum over a shrinking neighbourhood around a point $x$ converges to the maximum of the three points $c(x-),\,c(x)$ and $c(x+)$. The analogues statement holds for the infimum, which is the first part of ii). 
From Lemma~C.5, \citet{buecher2014}, the maps $x\mapsto c(x-)$ and $x\mapsto c(x+)$ both have a right-sided limit equal to $c(x+)$ and a left-sided limit equal to $c(x-)$, hence this is also true for the functions $c_{\vee}(x)=\max\{c(x-),c(x), c(x+)\}$ and $c_{\wedge}(x)= \min\{c(x-),c(x) c(x+)\}$ . From the above argument, we obtain
\begin{align*}
		(c_{\vee})_{\wedge}(x)&=\min\big\{c(x-),c(x+), \max\{c(x-),c(x), c(x+) \} \big\} = \min\{c(x-),c(x+)\} \qquad \text{and}\\
		(c_{\wedge})_{\vee}(x)&=\max\big\{c(x-),c(x+), \min\{c(x-),c(x), c(x+) \} \big\} = \max\{c(x-),c(x+)\}.
	\end{align*}
	If $c(x_0-)\leq c(x_0)\leq c(x_0+)$ or $c(x_0-)\geq c(x_0)\geq c(x_0+)$, we obtain
	\begin{align*}
		c_{\vee}(x_0)&=\max\{c(x-), c(x+)\} = (c_{\wedge})_{\vee}(x_0),\quad		c_{\wedge}(x_0) = \min\{c(x-), c(x+)\}=(c_{\vee})_{\wedge}(x_0).
	\end{align*}

\medskip

Ad iii): 	From Lemma~\ref{lem:rel_sup_inf} (last statement) and (\ref{eq:lowerhull}) we obtain $(\nicefrac{1}{c})_{\wedge}=\nicefrac{1}{c_{\vee}}$ and $(\nicefrac{1}{c})_{\vee}=\nicefrac{1}{c_{\wedge}}$. The hypi-convergence of 
$\nicefrac{1}{c_n}$ to these hulls follows similarly from Lemma~\ref{lem:rel_sup_inf} (last statement) and the pointwise criteria (\ref{eq:epi_pointwise}) and (\ref{eq:hypo_pointwise}) for hypi-convergence.  
\end{proof}

\end{document}